\documentclass{amsart}
\usepackage{amsfonts}
\usepackage{latexsym}
\usepackage{amssymb}
\usepackage{amsmath}


\newcommand{\R}{\mathbb R}

\newcommand{\E}{\mathbb E}
\newcommand{\Pro}{\mathbb P}


\newtheorem{thm}{Theorem}[section]
\newtheorem{cor}{Corollary}[section]
\newtheorem{lemma}{Lemma}[section]

\newtheorem{proposition}{Proposition}[section]

\theoremstyle{remark}
\newtheorem*{rmk}{Remark}


\begin{document}


\title{Mean width of Random Perturbations of Random Polytopes}


\author{David Alonso-Guti\'{e}rrez}
\address{Departamento de Matem\'aticas, Universidad de Murcia, Campus de
Espinar\-do, 30100-Murcia, Spain} \email{davidalonso@um.es}

\author{Joscha Prochno}
\address{Institute of Analysis, Johannes Kepler University Linz,
Altenbergerstrasse 69, 4040 Linz, Austria} \email{joscha.prochno@jku.at}

\keywords{Random Polytope, Random Perturbation, Mean Width}
\subjclass[2000]{Primary 52A22, Secondary 52A23, 05D40}

\thanks{The first author is partially supported by MICINN project MTM2010-16679,
MICINN-FEDER project MTM2009-10418 and ``Programa de Ayudas a Grupos de
Excelencia de la Regi\'on de Murcia'', Fundaci\'on S\'eneca,
04540/GERM/06. The second author is supported by the Austrian Science Fund,
FWF project P23987 ``Projection operators in
Analysis and geometry of classical Banach spaces''.}

\date{\today}

\begin{abstract}
We prove some ``high probability'' results on the expected value of the mean width for random perturbations of random polytopes. The random perturbations are considered for Gaussian and $p$-stable random vectors, as well as uniform distributions on $\ell_p^N$-balls and the unit sphere. \end{abstract}

\maketitle

\section{Introduction and notation}

The convex hull of $N$ independent random points is called a random polytope. Their study was initiated by Sylvester with a problem posed in the April issue of The Educational Times in 1864 \cite{S}. He asked for the probability that four points chosen uniformly at random in an indefinite plane have a convex hull which is a four-sided polygon. Within a year it was understood that Sylvester's question was ill-posed. Therefore, he modified the question, asking for the probability that four points chosen independently and uniformly at random from a convex set $K$ in the plane form a four-sided polygon. The problem became known as the famous ``four-point problem'' and was the starting point of extensive research (see also \cite{B} and the references therein).

Later, in their seminal papers \cite{RS1}, \cite{RS2}, \cite{RS3} R\'enyi and Sulanke focussed their investigations on the asymptotic behavior of the expected volume of a random polytope as the number of points $N$ tends to infinity.

Since then, random polytopes received increasing attention, especially in the last decades. Among other things, important quantities are expectations, variances, and distributions of geometric functionals associated to the random polytope. Examples are the volume, the number of vertices, intrinsic volumes, the distance between the random polytope and $K$, and the mean width, just to mention a few.

The study is also stimulated by important applications and connections to various other fields. Those can be found not only in statistics in the form of extreme points of random samples or in convex geometry used to approximate convex sets, but also in theoretical computer science in analyzing the average complexity of algorithms. In view of random perturbations, in this context it is important to mention the groundbreaking work \cite{ST} by Spielman and Teng in which they introduced the concept of ``smooth analysis'' which is a finer concept than worst-case or average-case analysis, using small random perturbations of worst-case inputs of the algorithm. This was crucial to understand the excellent  performance of the simplex method, allowing them to show that it has polynomial ``smoothed complexity''.

In this paper we are interested in one of the aforementioned geometric functionals, namely the expected value of the mean width. We study this functional for randomly perturbed random polytopes and give ``high probability'' estimates for several types of perturbations such as Gaussian, $p$-stable, uniform distributions on the unit sphere and on $\ell_p^N$-balls. Crucial in the proofs of the main results is the so-called ``concentration of measure phenomenon'', going back to an idea of L\'evy, and pushed forward and emphasized by V. Milman in the 1970's in his work on asymptotic geometric analysis (see also \cite{MS}, \cite{L}). Another important tool is the central limit theorem for isotropic log-concave random vectors \cite{K}, which shows that for many directions, the density of the $1$-dimensional marginals of an isotropic log-concave random vector is approximately Gaussian in some range.

Now, let us introduce the notation we need in order to state our results. A log-concave random vector $X$ in $\R^n$ is a random vector whose density with respect to the Lebesgue measure is $f(x)=e^{-V(x)}$, with $V:\R^n\to (-\infty,\infty]$ a convex function. A log-concave random vector is said to be isotropic if it is centered and its covariance matrix is the identity:
\begin{itemize}
\item $\E X=0$
\item$\E X_iX_j=\delta_{i,j}$,
\end{itemize}
where $\E$ denotes the expectation and $\delta_{i,j}$ is the Kronecker delta. We will denote by $\Pro_X$ and $\E_X$ the probability and expectation with respect to the random vector $X$, or simply $\Pro$, and $\E$ when no confusion is possible. If $\theta$ is a vector in the Euclidean unit sphere $S^{n-1}$, $f_\theta$ will denote the density of the $1$-dimensional marginal $\langle X,\theta\rangle$, where $\langle\cdot,\cdot\rangle$ denotes the usual scalar product in $\R^n$.

Examples of isotropic log-concave random vectors are standard Gaussian random vectors or random vectors uniformly distributed in $\frac{K}{L_K}$, where $K$ is an isotropic convex body and $L_K$ is its isotropic constant.

Let $X_1,\dots,X_N$ be independent copies of an isotropic log-concave random vector $X$ in $\R^n$ $(n\leq N$). The random polytope $K_N$ will be defined as their symmetric convex hull, i.e.,
$$
K_N:=\textrm{conv}\{\pm X_1,\dots,\pm X_N\}.
$$
If $y$ is a vector in $\R^N$, the perturbation of $K_N$ given by $y$ will be denoted by $K_{N,y}$ and is defined by
$$
K_{N,y}:=\textrm{conv}\{\pm y_1X_1,\dots,\pm y_NX_N\}.
$$

If $K\subseteq\R^n$ is a convex body the support function of $K$ is defined by
$$
h_K(x):=\max\{\langle x,y\rangle \,:\,y\in K\}.
$$
The mean width of $K$ is
$$
w(K):=\int_{S^{n-1}}h_K(\theta) \, d\sigma(\theta),
$$
where $d\sigma$ is the uniform Haar probability measure on $S^{n-1}$.

Our first result involves perturbations of a random polytope when the perturbation is a standard Gaussian random vector:

\begin{thm}\label{THM_Gaussian_perturbation}
Let $X_1,\dots,X_N$  be independent copies of an isotropic log-concave random vector in $\R^n$ ($n\leq N\leq e^{\sqrt n}$) and let $G$ be a Gaussian random vector in $\R^N$. Then there exist absolute constants $c,c_1,c_2$ such that for every $t>0$
$$
\Pro_G\left(c_1(1-t)\leq\frac{\E_{X_1,\dots,X_N}w(K_{N,G})}{\log N}\leq c_2(1+t)\right)\geq1-\frac{1}{N^{c t^2}}.
$$
\end{thm}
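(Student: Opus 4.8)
The plan is to reduce the mean width of the randomly perturbed polytope $K_{N,G}$ to a quantity that, conditionally on the $X_i$'s, is controlled by a supremum of a Gaussian process in the perturbation vector $G$, and then to apply Gaussian concentration together with the structural information on the $X_i$'s coming from the central limit theorem for log-concave vectors. First I would write out the mean width as
$$
w(K_{N,G})=\int_{S^{n-1}}\max_{1\le i\le N}|G_i|\,|\langle X_i,\theta\rangle|\,d\sigma(\theta),
$$
so that, for fixed $X_1,\dots,X_N$, the map $G\mapsto w(K_{N,G})$ is a norm (hence a positively homogeneous convex Lipschitz function) of $G$; its Lipschitz constant with respect to the Euclidean norm on $\R^N$ is at most $\sup_{\theta}\bigl(\sum_i \langle X_i,\theta\rangle^2\bigr)^{1/2}\le \max_i|X_i|$. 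I would then use the standard Gaussian concentration inequality to get, for each fixed configuration of the $X_i$,
$$
\Pro_G\Bigl(\bigl|w(K_{N,G})-\E_G w(K_{N,G})\bigr|\ge s\Bigr)\le 2e^{-cs^2/\max_i|X_i|^2}.
$$

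Next I would identify $\E_G w(K_{N,G})$ up to constants. By Jensen and Fubini, $\E_G w(K_{N,G})=\int_{S^{n-1}}\E_G\max_i|G_i|\,|\langle X_i,\theta\rangle|\,d\sigma(\theta)$. The inner expectation is the expected maximum of $N$ (dependent, but this does not matter for the upper bound, and for the lower bound one uses that the $G_i$ are independent) scaled Gaussians; it is comparable to $\sqrt{\log N}\cdot\max_i|\langle X_i,\theta\rangle|$ from above and, after integrating over $\theta$, one gets both bounds of the form (constant)$\cdot\sqrt{\log N}\cdot w(K_N)$ — more precisely the lower bound comes from restricting the max to a single well-chosen index and the upper bound from the union bound on Gaussian tails. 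So the problem is reduced to showing $w(K_N)$ is of order $\sqrt{\log N}$ with the claimed high probability in the $X_i$'s; combined with the $\sqrt{\log N}$ factor this yields the order $\log N$ in the statement. Here I expect to invoke known estimates: for isotropic log-concave $X_i$ one has $w(K_N)\ge c\sqrt{\log N}$ always (e.g. since $h_{K_N}(\theta)\ge \max_i|\langle X_i,\theta\rangle|$ and for a positive proportion of directions $\langle X_i,\theta\rangle$ is nearly Gaussian of unit variance by the central limit theorem of \cite{K}), and $w(K_N)\le C\sqrt{\log N}$ with probability at least $1-N^{-c}$ in the regime $N\le e^{\sqrt n}$ (this is where the upper restriction on $N$ is used, controlling $\max_i|X_i|\lesssim \sqrt n$ and using that $K_N\subseteq$ a suitable multiple of the $\ell_\infty$-type body over the $X_i/\sqrt n$, or directly a Dudley/Sudakov chaining bound on $w(K_N)$).

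Finally I would assemble the pieces. On the event $\mathcal{E}$ in the $X_i$'s (of probability $\ge 1-N^{-c}$) on which simultaneously $c_1'\sqrt{\log N}\le w(K_N)\le c_2'\sqrt{\log N}$ and $\max_i|X_i|\le C\sqrt n$, the Gaussian concentration bound with $s=t\cdot\E_G w(K_{N,G})\asymp t\sqrt{\log N}\,w(K_N)\asymp t\log N$ gives deviation probability at most $2\exp(-c t^2 (\log N)^2/n)$; but since $N\le e^{\sqrt n}$, i.e. $(\log N)^2\le n$, this is only $2\exp(-ct^2)$ — not enough. The fix, and the main technical obstacle, is that one must \emph{not} take the concentration statement conditionally at the worst Lipschitz constant $\sqrt n$, but rather observe that the relevant quantity is $\E_{X_1,\dots,X_N} w(K_{N,G})$ — the expectation over the $X_i$ is taken \emph{first} — so $G\mapsto \E_{X} w(K_{N,G})$ is again a norm on $\R^N$ whose Lipschitz constant is $\E_X \max_i |X_i|/\sqrt N$-type quantity or better, in fact of order $1$ (since $\E_X(\sum_i\langle X_i,\theta\rangle^2)^{1/2}\le (\E_X\sum_i\langle X_i,\theta\rangle^2)^{1/2}=\sqrt N$, giving Lipschitz constant $\le \sqrt N$, while $\E_X w(K_{N,G})\asymp \log N$, so the ratio $s/\mathrm{Lip}$ in the exponent becomes $t\log N/\sqrt N\cdot\sqrt{\log N}$... ). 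The correct route, which I would carry out carefully, is to apply Gaussian concentration to $\Phi(G):=\E_{X_1,\dots,X_N} w(K_{N,G})$ directly: $\Phi$ is a norm of $G$, $\E_G\Phi(G)\asymp\log N$ by the two-sided estimates above (now in expectation over $X$, removing the need for the high-probability event on $w(K_N)$), and $\mathrm{Lip}(\Phi)\le \bigl(\E_X\max_i|X_i|^2\bigr)^{1/2}$; since in the log-concave isotropic case $\E_X\max_{i\le N}|X_i|^2\lesssim n$ when $N\le e^{\sqrt n}$ (Borell-type deviation plus a union bound), we get $\mathrm{Lip}(\Phi)\lesssim\sqrt n$, hence
$$
\Pro_G\bigl(|\Phi(G)-\E_G\Phi(G)|\ge t\,\E_G\Phi(G)\bigr)\le 2\exp\!\bigl(-c\,t^2(\log N)^2/n\bigr),
$$
and it remains to absorb the $n$ in the denominator. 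Since $(\log N)^2\le n$ is exactly the bad case, one instead upgrades the Lipschitz bound: $\Phi$ depends on $G$ only through a $\sqrt{\log N}$-Lipschitz rescaling once one uses that the "effective" directions $\theta$ see each $\langle X_i,\theta\rangle$ of order $1$ rather than of order $\max_i|X_i|$, which is precisely the content of a $\psi_2$-type or reverse-Hölder estimate for the marginals; this replaces $\sqrt n$ by $\sqrt{\log N}$ in $\mathrm{Lip}(\Phi)$ and yields the exponent $-ct^2\log N$, i.e. the bound $N^{-ct^2}$. I expect this last replacement — showing the correct Lipschitz/variance proxy for $G\mapsto \E_X w(K_{N,G})$ is of order $\sqrt{\log N}$ and not $\sqrt n$ — to be the crux of the argument, and it is where the hypothesis $N\le e^{\sqrt n}$ together with the central limit theorem for log-concave vectors \cite{K} genuinely enters.
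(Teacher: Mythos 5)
Your high-level plan is the right one: apply Gaussian concentration to $\Phi(G):=\E_{X_1,\dots,X_N}w(K_{N,G})$, show $\E_G\Phi(G)\asymp\log N$, and show $\Phi$ is $O(\sqrt{\log N})$-Lipschitz. The paper proceeds exactly in this framework. But the two key ingredients are precisely where your argument has gaps, and in one of them you also point toward the wrong mechanism.

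\textbf{The Lipschitz constant.} You bound $\mathrm{Lip}(\Phi)$ by quantities of order $\sqrt n$ (via $\max_i|X_i|$ or $(\E_X\max_i|X_i|^2)^{1/2}$), observe this is too weak, and then declare that upgrading it to $\sqrt{\log N}$ ``is the crux of the argument'' to be obtained from the central limit theorem and $\psi_2$-type estimates --- without actually doing it. In fact the paper's Lipschitz bound is completely elementary and does \emph{not} use the CLT. One writes $y_1-y_2=|y_1-y_2|\,u$ with $u\in S^{N-1}$, uses $\bigl|\max_i|a_i|-\max_i|b_i|\bigr|\le\max_i|a_i-b_i|$ and $|u_i|\le 1$ pointwise to get
$$
|\Phi(y_1)-\Phi(y_2)|\le |y_1-y_2|\,\int_{S^{n-1}}\E_X\max_{1\le i\le N}|\langle X_i,\theta\rangle|\,d\sigma(\theta)=|y_1-y_2|\,\E_X w(K_N),
$$
and then $\E_X w(K_N)\le C\,w(Z_{\log N}(X))\le C'\sqrt{\log N}$ from the easy upper bound $h_{Z_q}(\theta)\le Cq$ for log-concave marginals (this is where $N\le e^{\sqrt n}$ enters). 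Note also that even your ``first attempt'' bound $\sup_\theta\bigl(\sum_i\langle X_i,\theta\rangle^2\bigr)^{1/2}\le\max_i|X_i|$ has the inequality reversed, and neither is the sharp conditional Lipschitz constant (which is $w(K_N)$, by integrating the pointwise-in-$\theta$ bound $\max_i|\langle X_i,\theta\rangle|$). So the ``crux'' you flag is real but it is solved by a trick you did not find, and the route you gesture at is not the one that works.

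\textbf{The lower bound on $\E_G\Phi(G)$.} You argue that $\E_G\max_i|G_i||\langle X_i,\theta\rangle|$ is comparable to $\sqrt{\log N}\max_i|\langle X_i,\theta\rangle|$, and that ``the lower bound comes from restricting the max to a single well-chosen index.'' Restricting to one index $i^*$ gives only $\E|g_{i^*}|\cdot|\langle X_{i^*},\theta\rangle|=\sqrt{2/\pi}\,|\langle X_{i^*},\theta\rangle|$, with no $\sqrt{\log N}$ factor; and the claimed two-sided comparison is false in general (take one $a_i$ huge and the rest zero). The genuine lower bound $\E_G\Phi(G)\ge c\log N$ requires understanding the joint law of the $\langle X_i,\theta\rangle$'s, which is where Klartag's CLT actually enters the paper: for $n\le N\le n^\delta$ one uses the Gordon--Litvak--Sch\"utt--Werner Orlicz-norm representation of $\E\max_i|y_i\langle X_i,\theta\rangle|$ together with the CLT-based tail estimate $\Pro(|\langle X,\theta\rangle|\ge\alpha\sqrt{\log N})\gtrsim N^{-\alpha^2/2}/\sqrt{\log N}$ (Lemma~\ref{ProbEstimateCLT}); for $n^2\le N\le e^{\sqrt n}$ one instead shows, via a Litvak--Pajor--Rudelson--Tomczak-Jaegermann style net argument, that $K_{N,G}\supseteq c\sqrt{\log N}\,Z_{\log N}(X)$ with probability tending to one. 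Both of these are substantive arguments that your sketch does not reproduce.
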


In our second result we also consider a random perturbation of a random polytope where the random perturbation does not have independent coordinates:
\begin{thm}\label{THM_Sphere_perturbation}
Let $X_1,\dots,X_N$  be independent copies of an isotropic log-concave random vector in $\R^n$ ($n\leq N\leq e^{\sqrt n}$) and let $u$ be a random vector uniformly distributed on $S^{N-1}$. Then there exist  absolute constants $c,c_1,c_2$ such that for every $t>0$
$$
\sigma\left(u\in S^{N-1}\,:\,c_1(1-t)\leq\frac{\E_{X_1,\dots,X_N}w(K_{N,u})}{\frac{\log N}{\sqrt N}}\leq c_2(1+t)\right)\geq1-\frac{1}{N^{c t^2}}.
$$
\end{thm}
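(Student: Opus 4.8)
The plan is to deduce Theorem~\ref{THM_Sphere_perturbation} from Theorem~\ref{THM_Gaussian_perturbation} via the classical representation of the uniform measure on the sphere. Write $u=G/\|G\|_2$, where $G$ is a standard Gaussian random vector in $\R^N$ and $\|\cdot\|_2$ denotes the Euclidean norm; then $u$ is uniformly distributed on $S^{N-1}$. The starting point is the elementary observation that, for every $y\in\R^N$ and every $\lambda>0$, one has $K_{N,\lambda y}=\lambda K_{N,y}$ and hence $w(K_{N,\lambda y})=\lambda\,w(K_{N,y})$; indeed $h_{K_{N,y}}(\theta)=\max_{1\le i\le N}|y_i\langle X_i,\theta\rangle|$ is positively homogeneous of degree $1$ in $y$. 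Since $\|G\|_2$ does not depend on $X_1,\dots,X_N$, taking the expectation in $X_1,\dots,X_N$ yields the pointwise identity
$$\E_{X_1,\dots,X_N}w\bigl(K_{N,G/\|G\|_2}\bigr)=\frac{1}{\|G\|_2}\,\E_{X_1,\dots,X_N}w(K_{N,G}),$$
so that the function $u\mapsto\E_{X_1,\dots,X_N}w(K_{N,u})$ on $S^{N-1}$ has the same distribution as $G\mapsto\|G\|_2^{-1}\,\E_{X_1,\dots,X_N}w(K_{N,G})$.

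Next I would estimate numerator and denominator separately. The numerator is precisely the quantity controlled by Theorem~\ref{THM_Gaussian_perturbation}: there are absolute constants such that $c_1(1-t)\log N\le\E_{X_1,\dots,X_N}w(K_{N,G})\le c_2(1+t)\log N$ on a set of $G$ of probability at least $1-N^{-ct^2}$. For the denominator I would invoke the concentration of the Euclidean norm of a Gaussian vector: $\|\cdot\|_2$ is $1$-Lipschitz, and since $\E\|G\|_2^2=N$ and $\mathrm{Var}(\|G\|_2)\le1$ one has $\sqrt{N-1}\le\E\|G\|_2\le\sqrt N$, whence by Gaussian concentration $\Pro_G\bigl(\,(1-t)\sqrt N\le\|G\|_2\le(1+t)\sqrt N\,\bigr)\ge1-2e^{-cNt^2}\ge1-N^{-ct^2}$, the last inequality using $N\ge\log N$. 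Intersecting the two events and applying a union bound, with probability at least $1-2N^{-ct^2}$ we obtain
$$\frac{c_1(1-t)}{1+t}\cdot\frac{\log N}{\sqrt N}\ \le\ \E_{X_1,\dots,X_N}w(K_{N,u})\ \le\ \frac{c_2(1+t)}{1-t}\cdot\frac{\log N}{\sqrt N}.$$

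It then remains to bring this into the stated form. For $t$ below a fixed absolute threshold one has $\tfrac{1-t}{1+t}\ge1-2t$ and $\tfrac{1+t}{1-t}\le1+4t$, so after replacing $t$ by a suitable constant multiple of itself and relabelling $c,c_1,c_2$ one arrives at the claimed two-sided bound with failure probability $N^{-ct^2}$; for $t$ above the threshold the lower inequality is vacuous (as $c_1(1-t)\le0$) while the upper inequality is just the tail bound obtained above, which is valid for all $t>0$. I do not expect a genuine obstacle here, since all of the analytic difficulty is already packaged into Theorem~\ref{THM_Gaussian_perturbation}; the only points that require a little care are phrasing the Gaussian-norm concentration with a tail of the form $N^{-ct^2}$ so that it is compatible with the probability appearing in that theorem, and the routine bookkeeping of the absolute constants and of the range of $t$.
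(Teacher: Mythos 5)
Your proposal is correct and takes a genuinely different route from the paper's. The paper proves Theorem~\ref{THM_Sphere_perturbation} by running the sphere concentration machinery from scratch: it applies Theorem~\ref{THM_Sphere_concentration} directly to the function $f(y)=\E_{X_1,\dots,X_N}w(K_{N,y})$ on $S^{N-1}$, with the Lipschitz estimate $L\lesssim\sqrt{\log N}$ from Lemma~\ref{LemmaLipsichtzConstant} and the expectation $\int_{S^{N-1}}f\,d\sigma\sim\frac{\log N}{\sqrt N}$ from Corollary~\ref{LemmaExpectationSphere}; the exponent $N\cdot(\E f)^2/L^2\sim\log N$ then produces the failure probability $N^{-ct^2}$. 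You instead treat Theorem~\ref{THM_Gaussian_perturbation} as a black box and transfer it to the sphere via the polar decomposition $u=G/\|G\|_2$, using the degree-one homogeneity of $y\mapsto\E_{X_1,\dots,X_N}w(K_{N,y})$ and the standard concentration of $\|G\|_2$ around $\sqrt N$. Both proofs ultimately rest on the same analytic core (Gaussian concentration plus Lemma~\ref{LemmaLipsichtzConstant} and Corollary~\ref{CorExpectationGaussian}, since Corollary~\ref{LemmaExpectationSphere} is itself obtained from Corollary~\ref{CorExpectationGaussian} by integration in polar coordinates), but yours is more economical: it avoids invoking a separate concentration inequality and avoids re-deriving the expectation on the sphere.

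One place where your write-up is too casual is the regime of large $t$. Your combined bound has the factor $(1-t')^{-1}$ in the upper estimate, which blows up once $t'$ (a constant multiple of $t$) reaches $1$, so the statement ``the upper inequality is just the tail bound obtained above, which is valid for all $t>0$'' is not literally true as written. The fix is cheap but should be said: for $t$ beyond a fixed absolute threshold, freeze the norm-concentration level (e.g.\ demand only $\|G\|_2\ge\tfrac12\sqrt N$, which fails with probability at most $Ce^{-cN}\le N^{-ct^2}$ in the relevant range), and for $t$ so large that $c_2(1+t)\frac{\log N}{\sqrt N}\ge C\sqrt{\log N}$ observe that the upper bound holds deterministically, since $|u_i|\le1$ for every $i$ gives $\E_{X_1,\dots,X_N}w(K_{N,u})\le\E_{X_1,\dots,X_N}w(K_N)\le C\sqrt{\log N}$ for every $u\in S^{N-1}$ (this is exactly the bound used in Lemma~\ref{LemmaLipsichtzConstant}). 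With this patch the argument is complete.
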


Another random perturbation we consider, with non-independent coordinates, is the case in which the vector giving the perturbation is uniformly distributed in the unit ball of $\ell_p^N$, which we denote by $B_p^N$. We will prove the following:

\begin{thm}\label{THM_B_p_perturbation}
Let $X_1,\dots,X_N$  be independent copies of an isotropic log-concave random vector in $\R^n$ ($n\leq N\leq e^{\sqrt n}$) and let $y$ be a random vector uniformly distributed in $B_p^N$. Then there exist absolute constants, $c_1,c_2,c,c^\prime$ such that for every $t>0$
$$
\Pro_y\left(c_1(1-t)\leq\frac{\E_{X_1,\dots,X_N}w(K_{N,y})}{\frac{(\log N)^{\frac{1}{p}+\frac{1}{2}}}{N^\frac{1}{p}}}\leq c_2(1+t)\right)\geq1-\frac{1}{N^{\frac{(c t)^p}{p}}}.
$$
\end{thm}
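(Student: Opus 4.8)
The plan is to show that, with overwhelming probability in $y$, the quantity $\E_{X_1,\dots,X_N}w(K_{N,y})$ is trapped between absolute constant multiples of $\|y\|_\infty\sqrt{\log N}$, and then to use that a point uniform in $B_p^N$ satisfies $\|y\|_\infty\asymp N^{-1/p}(\log N)^{1/p}$ with a Weibull-type tail, so that $\|y\|_\infty\sqrt{\log N}\asymp N^{-1/p}(\log N)^{1/p+1/2}$. Write $\E_X$ for $\E_{X_1,\dots,X_N}$, note that $K_{N,y}$ is the symmetric convex hull of the points $|y_i|X_i$, and recall that for a standard Gaussian vector $G$ in $\R^n$ one has $\E_G h_K(G)=\E\|G\|_2\,w(K)$ with $\E\|G\|_2\asymp\sqrt n$; since $h_{K_{N,y}}(G)=\max_{i\le N}|y_i|\,|\langle G,X_i\rangle|$, everything reduces to estimating $\E_G\max_{i\le N}|y_i|\,|\langle G,X_i\rangle|$ for fixed $X_1,\dots,X_N$.

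\emph{Upper bound.} From $|y_i|\le\|y\|_\infty$ we get $K_{N,y}\subseteq\|y\|_\infty K_N$, hence $w(K_{N,y})\le\|y\|_\infty w(K_N)$, and it suffices to bound $\E_X w(K_N)$. Conditionally on $X_1,\dots,X_N$ each $\langle G,X_i\rangle$ is a centred Gaussian of variance $\|X_i\|_2^2$, so $\E_G\max_{i\le N}|\langle G,X_i\rangle|\le C\sqrt{\log N}\,\max_{i\le N}\|X_i\|_2$; by the deviation inequality of Paouris, $\Pro(\|X\|_2\ge t\sqrt n)\le e^{-ct\sqrt n}$ for $t$ above an absolute constant, so using $N\le e^{\sqrt n}$ and a union bound, $\E_X\max_{i\le N}\|X_i\|_2\le C\sqrt n$. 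Hence $\E_X w(K_N)\le C\sqrt{\log N}$ and $\E_X w(K_{N,y})\le C\|y\|_\infty\sqrt{\log N}$. A coordinate of a point uniform in $B_p^N$ has density $c_{N,p}(1-|s|^p)_+^{(N-1)/p}$ with $c_{N,p}\asymp N^{1/p}$, whence $\Pro_y(|y_1|\ge v)\le CN^{1/p}e^{-cNv^p}$; a union bound then yields $\Pro_y\big(\|y\|_\infty>C_0(1+t)N^{-1/p}(\log N)^{1/p}\big)\le N^{-(ct)^p/p}$ for a suitable absolute constant $C_0$ (using $(1+t)^p\ge 1+t^p$). On the complement this gives the stated upper bound on the ratio.

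\emph{Lower bound.} For $t\ge 1$ it is trivial since $c_1(1-t)\le 0$, so take $t\in(0,1)$; it suffices to find an absolute $c_1$ with $\E_X w(K_{N,y})\ge c_1 N^{-1/p}(\log N)^{1/p+1/2}$ off a $y$-event of probability $\le N^{-(ct)^p/p}$. By the Schechtman--Zinn representation of the uniform measure on $B_p^N$ (a vector of i.i.d.\ $p$-generalized Gaussians over a normalizer concentrated at $\asymp N^{1/p}$) and a Chernoff bound, with $y$-probability $\ge 1-N^{-c}$ one has both $\|y\|_\infty\le\Lambda N^{-1/p}(\log N)^{1/p}$ and $|S|\ge N^{1/2}$, where $S:=\{i\le N:\ |y_i|\ge\lambda N^{-1/p}(\log N)^{1/p}\}$ and $\lambda<\Lambda$ are absolute; then $|y_i|\in[\lambda,\Lambda]\,N^{-1/p}(\log N)^{1/p}$ for all $i\in S$. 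Work on the $X$-event that $\|X_j\|_2\le K\sqrt n$ for every $j$ and that no pair $i\ne j$ in $S$ satisfies $\|X_i-cX_j\|_2<\varepsilon\sqrt n$ for some $c\in[\lambda/\Lambda,\Lambda/\lambda]$: since an isotropic log-concave density in $\R^n$ is bounded by $C^n$, a fixed bad pair-event has probability $\le(C\varepsilon)^n$, so (using $N\le e^{\sqrt n}$ and Paouris once more) the $X$-event has probability $\ge 1-e^{-\sqrt n}$ for a small absolute $\varepsilon$. On it the $|S|\ge N^{1/2}$ points $\{|y_i|X_i:i\in S\}$ are pairwise $\varepsilon\lambda N^{-1/p}(\log N)^{1/p}\sqrt n$-separated, because $|y_i|X_i-|y_j|X_j=|y_i|\big(X_i-\tfrac{|y_j|}{|y_i|}X_j\big)$ with $\tfrac{|y_j|}{|y_i|}\in[\lambda/\Lambda,\Lambda/\lambda]$. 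Sudakov minoration for the Gaussian process $G\mapsto|y_i|\langle G,X_i\rangle$, $i\in S$, then gives
\[
\E_G\max_{i\le N}|y_i|\,|\langle G,X_i\rangle|\ \ge\ c\,\varepsilon\lambda\,N^{-1/p}(\log N)^{1/p}\,\sqrt{n}\,\sqrt{\tfrac12\log N},
\]
and dividing by $\E\|G\|_2\asymp\sqrt n$ yields $w(K_{N,y})\ge c_1 N^{-1/p}(\log N)^{1/p+1/2}$ on this $X$-event. Averaging over $X_1,\dots,X_N$ (the complementary $X$-event contributing $\ge 0$) gives the same lower bound for $\E_X w(K_{N,y})$ on the good $y$-event, and since $N^{-c}\le N^{-(ct)^p/p}$ for $t\in(0,1)$ when $c$ in the statement is chosen small enough, the lower bound on the ratio follows.

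\emph{Main obstacle.} The genuinely delicate step is the separation estimate in the lower bound: one must exclude, with overwhelming $X$-probability, that many of the $N\le e^{\sqrt n}$ samples $X_i$ are almost positively proportional to one another with the right ratios, so that the restricted family $\{|y_i|X_i:i\in S\}$ admits a Sudakov lower bound. This is precisely where the boundedness of isotropic log-concave densities in $\R^n$ and the hypothesis $N\le e^{\sqrt n}$ are used. (An alternative more in the spirit of Theorems \ref{THM_Gaussian_perturbation} and \ref{THM_Sphere_perturbation} would estimate $\E_X\max_{i\in S}|\langle X_i,\theta\rangle|$ for most $\theta\in S^{n-1}$, which instead requires the central limit theorem for isotropic log-concave vectors to provide a Gaussian-type lower tail $\Pro(|\langle X,\theta\rangle|\ge s)\gtrsim e^{-Cs^2}$ up to scale $s\asymp\sqrt{\log N}\le n^{1/4}$ — again licensed by $N\le e^{\sqrt n}$.) It then remains only to union the bad $y$-events, of total probability $\le N^{-(ct)^p/p}$, and observe that on the complement $\E_X w(K_{N,y})$, divided by $N^{-1/p}(\log N)^{1/p+1/2}$, lies between $c_1(1-t)$ and $c_2(1+t)$.
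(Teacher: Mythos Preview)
Your route is genuinely different from the paper's. The paper never tries to control $\E_X w(K_{N,y})$ pointwise in $y$ via $\|y\|_\infty$ and a Sudakov argument; instead it shows (Lemma~\ref{LemmaLipsichtzConstant}) that $f(y)=\E_{X_1,\dots,X_N} w(K_{N,y})$ is $C\sqrt{\log N}$--Lipschitz on $\R^N$, computes $\E_y f(y)\asymp N^{-1/p}(\log N)^{1/p+1/2}$ directly (Lemma~\ref{LemmaExpectationB_p}, via the $e^{-|t|^p}$ representation together with either an Orlicz--norm calculation based on Klartag's central limit theorem or a random--operator argument, depending on the size of $N$), and then applies the Schechtman--Zinn concentration inequality on $B_p^N$ (Theorem~\ref{THM_B_p_concentration}). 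Since $(\E_y f(y))^p/L^p\asymp(\log N)/N$, the tail $N^{-(ct)^p/p}$ drops out in one line. Your upper bound via $K_{N,y}\subseteq\|y\|_\infty K_N$ together with a direct tail estimate on $\|y\|_\infty$ is a legitimate alternative; the paper's method is cleaner mainly because it treats Theorems~\ref{THM_Gaussian_perturbation}--\ref{THM_p-stable_perturbation} uniformly.

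The lower bound, however, has a real gap. You write that ``an isotropic log-concave density in $\R^n$ is bounded by $C^n$'' with $C$ absolute; this is equivalent to a uniform bound on the isotropic constant, i.e.\ the slicing problem, which was wide open when the paper was written and was settled only recently (Chen, Klartag--Lehec). As stated, your separation estimate therefore rests on a deep theorem presented as a triviality. The step can be repaired without it: for fixed good $y$ and fixed $i\neq j$ in $S$, the vector $(X_i-c_{ij}X_j)/\sqrt{1+c_{ij}^2}$ (with $c_{ij}=|y_j|/|y_i|$) is again isotropic log-concave, and Paouris's small-ball inequality gives $\Pro\big(\|X_i-c_{ij}X_j\|<\varepsilon\sqrt n\big)\le (C\varepsilon)^{c\sqrt n}$; a union bound over at most $|S|^2\le N\le e^{\sqrt n}$ pairs then succeeds for $\varepsilon$ a small enough absolute constant, after which your Sudakov minoration goes through. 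Note also that there is no need to quantify over all $c\in[\lambda/\Lambda,\Lambda/\lambda]$: once $y$ is fixed, only the specific ratios $c_{ij}$ matter.
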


Finally, we will consider random perturbations of random polytopes, where the perturbation is given by a random vector whose coordinates are independent identically distributed $p$-stable random variables. A real valued random variable $\xi$ is called normalized symmetric $p$-stable for some $p\in(0,2]$ if its characteristic function is
$$
\mathbb \phi_{\xi}(x)= \mathbb E e^{i\xi x} = e^{-|x|^p}.
$$
In the case that $1<p<2$ we have finite first moments, but no finite variance. Gaussian random variables are $2$-stable. However, their behavior is rather different from the one of $p$-stable random variables when $p$ is close to $2$. We will prove the following:

\begin{thm}\label{THM_p-stable_perturbation}
Let $X_1,\dots,X_N$  be independent copies of an isotropic log-concave random vector in $\R^n$ and let $\xi=(\xi_1,\ldots,\xi_N)$ where $\xi_1,\ldots,\xi_N$ are independent identically distributed symmetric normalized $p$-stable random variables with $\frac 3 2 <p <2$. Then there exist absolute constants $c,c_1,c_2,C>0$ such that
$$
\mathbb P_\xi \left( c_1(1-t) \leq \frac{\mathbb \E_{X_1,\dots,X_N} \omega(K_{N,\xi})}{N^{\frac{1}{p}}}  \leq c_2(1+t) \right) \geq 1 - C \frac{\sqrt{\log N}}{N^{\frac{1}{p}}}
$$
for every $t$ with
$t^p \geq cM \frac{\sqrt{\log N}^p}{N} \log(M)\log(1+2M\log(M))$, $M=\frac{1}{2-p}$.
\end{thm}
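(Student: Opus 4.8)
The natural starting point is the identity, valid for each fixed realization of $\xi$,
\[
\E_{X_1,\dots,X_N}w(K_{N,\xi})=\int_{S^{n-1}}\E_{X_1,\dots,X_N}\Big(\max_{1\le i\le N}|\xi_i|\,|\langle X_i,\theta\rangle|\Big)\,d\sigma(\theta).
\]
The observation that makes this manageable is that for a \emph{fixed} direction $\theta$ the variables $\langle X_1,\theta\rangle,\dots,\langle X_N,\theta\rangle$ are i.i.d.\ copies of the one–dimensional marginal with density $f_\theta$, so the inner expectation is a functional of $f_\theta$ and of the decreasing rearrangement $\xi^\ast_1\ge\xi^\ast_2\ge\cdots$ of $(|\xi_1|,\dots,|\xi_N|)$ only. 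Hence the argument splits into a deterministic two–sided estimate of $\E_X\max_i a_iY_i$ (with $a$ decreasing and the $Y_i$ i.i.d.\ copies of a log–concave marginal), followed by a high–probability control of the order statistics of the $p$–stable vector $\xi$.

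For the deterministic part, the lower bound I would use is $\E_X\max_i a_iY_i\ge a_k\,\E_X\max_{j\le k}|\langle X_j,\theta\rangle|$ for every $k$, together with the reverse H\"older inequality (giving $\E_X|\langle X,\theta\rangle|\ge c$ uniformly in $\theta$, so already $\E_X\max_i a_iY_i\ge c\,a_1$) and, for the sharper version, the central limit theorem for isotropic log–concave vectors: for $\theta$ outside a set of $\sigma$–measure at most $N^{-c}$ the density $f_\theta$ is close to the standard Gaussian on an interval of length $\gtrsim\sqrt{\log N}$ (here $N\le e^{\sqrt n}$ keeps us inside the Gaussian window), so that $\E_X\max_{j\le k}|\langle X_j,\theta\rangle|\asymp\sqrt{\log(k+1)}$ for $k\le N$. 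For the matching upper bound I would use $\E_X\max_i a_iY_i\le\big(\sum_i a_i^q\big)^{1/q}\|\langle X,\theta\rangle\|_q\le Cq\big(\sum_i a_i^q\big)^{1/q}$ for every $q>1$ (Jensen together with the universal $\psi_1$ bound $\|\langle X,\theta\rangle\|_q\le Cq$), and, for the good directions, the Gaussian–type bound $\E_X\max_i a_iY_i\lesssim\sup_k a_k\sqrt{\log(k+1)}$, which is the one that meets the lower bound. Integrating over $S^{n-1}$ — the exceptional directions being absorbed via the $\psi_1$ estimate and the measure bound from the central limit theorem — reduces the whole problem to showing that, with the stated probability over $\xi$,
\[
\sup_{k\ge1}\xi^\ast_k\sqrt{\log(k+1)}\ \asymp\ \Big(\sum_{i=1}^N(\xi^\ast_i)^q\Big)^{1/q}\ \asymp\ N^{1/p}.
\]

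For this last step I would start from $\Pro(|\xi_i|>s)\sim\widehat c_p\,s^{-p}$, where $\widehat c_p\asymp 2-p=1/M$, truncate at the level $T\asymp N^{1/p}$, and apply Bernstein/Chernoff bounds to the binomial counts $\#\{i:|\xi_i|>s\}$ along a geometric grid of scales. This controls all the $\xi^\ast_k$ simultaneously from above and below, yields the two $\asymp N^{1/p}$ comparisons for a suitable exponent $q>p$, and handles the contribution of the $O(1)$ very large coordinates via the truncation tail. The parameter $M$ enters in two places: the power–tail constant $\widehat c_p$ is of order $1/M$, and the admissible exponent $q$ must stay bounded away from $p$ while the series $\sum_i i^{-q/p}$ and the comparison constants remain under control, so $q-p$ cannot be taken smaller than $\Theta(1/M)$; tracking how these degradations, together with the truncation error, propagate through both comparisons is what forces the restriction $t^p\gtrsim M\,(\log N)^{p/2}N^{-1}\log M\,\log(1+2M\log M)$ and the probability loss $C\sqrt{\log N}/N^{1/p}$.

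\textbf{Main obstacle.} The delicate point is this last step. An i.i.d.\ $p$–stable vector carries no useful concentration — its $\ell_q$ norms and order statistics fluctuate on Fr\'echet scale — so the high–probability estimates have to be extracted purely from level–crossing counts and a carefully tuned truncation whose parameters deteriorate as $p\uparrow2$, precisely because both the power–tail constant $\widehat c_p$ and the usable moment gap $q-p$ vanish like $2-p$. The second, more routine, difficulty is to verify that the $\sqrt{\log N}$ coming from the Gaussian window in the central limit theorem cancels the logarithmic factors on both sides after the integration over $S^{n-1}$, so that the mean width ends up on the clean scale $N^{1/p}$ with no spurious logarithms.
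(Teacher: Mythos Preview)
Your route is genuinely different from the paper's, and the difference stems from a mistaken premise. You write that ``an i.i.d.\ $p$--stable vector carries no useful concentration''; in fact the paper's entire argument for this theorem rests on the Houdr\'e--Marchal concentration inequality for Lipschitz functions of a $p$--stable vector (for $p>\tfrac32$): there is a constant $C$ such that $\Pro_\xi(|g(\xi)-\E g(\xi)|\ge s)\le Cs^{-p}$ whenever $s^p\ge 4M\log M\,\log(1+2M\log M)$. With this in hand the paper simply sets $f(y)=\E_{X_1,\dots,X_N}w(K_{N,y})$, shows once that $f$ is $C\sqrt{\log N}$--Lipschitz on $\R^N$ (via $|f(y_1)-f(y_2)|\le \E_Xw(K_N)\,|y_1-y_2|$), and then applies Houdr\'e--Marchal to $f/L$. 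The expectation $\E_\xi f(\xi)$ is computed without any order--statistic analysis and without Klartag's central limit theorem: one uses Fubini and the identity $\E_\xi\max_i|x_i\xi_i|\sim\|x\|_p$ (Gordon--Litvak--Sch\"utt--Werner) to get $\E_\xi\E_X h_{K_{N,\xi}}(\theta)\sim\E_X\|(\langle X_i,\theta\rangle)_i\|_p$, and then Jensen below and H\"older above give $\sim N^{1/p}$ uniformly in $\theta$. The restriction on $t$ and the probability loss in the statement are exactly what comes out of the Lipschitz constant $L\sim\sqrt{\log N}$ feeding into the Houdr\'e--Marchal threshold.

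Your proposed path --- pointwise (in $\xi$) two--sided bounds for $\E_X\max_i a_i|\langle X_i,\theta\rangle|$ via the CLT, followed by level--crossing control of the $p$--stable order statistics --- may be completable, but it is considerably heavier and introduces tools (Klartag's CLT, the associated restriction to good directions, the $N\le e^{\sqrt n}$ window) that the paper's proof of \emph{this} theorem does not need at all. Moreover, the ``sharper'' upper bound $\E_X\max_i a_iY_i\lesssim\sup_k a_k^\ast\sqrt{\log(k+1)}$ you invoke is only available inside the Gaussian window and does not follow from the $\psi_1$ bound alone; and since the dominant contribution in your scheme comes from $k=O(1)$ (that is, essentially from $\xi_1^\ast$, which has Fr\'echet tails), the high--probability step you flag as the ``main obstacle'' ends up reproducing the same $s^{-p}$ tail that Houdr\'e--Marchal gives directly. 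In short: the key lemma you are missing is precisely the concentration result you dismissed.
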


The paper is organized as follows: in Section \ref{Preliminaries} we will state the main tools we will use in our proofs. In Section \ref{RandomPerturbations} we will prove the aforementioned theorems. Finally, in Section \ref{ArbitraryPerturbations} we will prove an estimate for the mean width of some arbitrary perturbations of a random polytope.

Given an isotropic log-concave random vector $X$ and any $1\leq p<\infty$, its $L_p$-centroid body is defined via its support function
$$
h_{Z_p(X)}(\theta)=\left(\E|\langle X,\theta\rangle|^p\right)^\frac{1}{p}.
$$
We will use the notation $a\sim b$ to indicate the existence of two positive absolute constants $c_1,c_2$ such that $c_1a\leq b\leq c_2a$. $c,c^\prime,c_1,c_2,C,\dots$ will always denote positive absolute constants whose value may change from line to line. Throughout this paper, $|\cdot|$ will denote the Lebesgue volume, the absolute value as well as the Euclidean norm and the meaning will be clear from the context.

\section{Preliminary Results}\label{Preliminaries}

A convex function $M:[0,\infty)\to[0,\infty)$ where $M(0)=0$ and $M(t)>0$ for $t>0$ is called an Orlicz function. The conjugate function or dual function $M^*$ of an Orlicz function $M$ is given by the
Legendre transform
  $$
    M^*(x) = \sup_{t\in[0,\infty)}(xt-M(t)).
  $$
Again, $M^*$ is an Orlicz function and $M^{**}=M$. For instance, taking $M(t)=\frac{1}{p}t^p$, $p\geq 1$, the dual function is given by $M^*(t)=\frac{1}{p^*}t^{p^*}$ with $\frac{1}{p^*}+\frac{1}{p}=1$.
The $n$-dimensional {Orlicz space} $\ell_M^n$ is $\R^n$ equipped with the norm
  $$
    \Vert{x}\Vert_M = \inf \left\{ \rho>0 \,:\, \sum_{i=1}^n M\left(\tfrac{|x_i|}{\rho}\right) \leq 1 \right\}.
  $$
In case $M(t)=t^p$, $1\leq p<\infty$, we just have $\Vert\cdot\Vert_M=\Vert\cdot\Vert_p$. For a detailed and thorough introduction to the theory of Orlicz spaces we refer the reader to \cite{key-KR} and \cite{key-RR}.\\

In \cite{GLSW2} the authors obtained the following result:

\begin{thm}[\cite{GLSW2} Lemma 5.2]\label{THM Schuett Werner Litvak Gordon}
Let $X_1,\ldots,X_N$ be independent identically distributed random variables with finite first moments. For all $s\geq 0$ let
$$
  M(s)=\int_0^s\int_{\{\frac{1}{t}\leq |X_1|\}} |X_1| \,d\mathbb P \, dt.
$$
Then, for all $x=(x_i)_{i=1}^N\in\R^N$,
$$
\mathbb E \max\limits_{1\leq i \leq N}|x_iX_i|\sim \Vert x\Vert_M.
$$
\end{thm}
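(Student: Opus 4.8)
The plan is to begin with a reformulation of $M$ that makes everything transparent. A Fubini computation gives
$$
M(s)=\int_0^s \mathbb{E}\big[|X_1|\,\mathbf{1}_{\{|X_1|\ge 1/t\}}\big]\,dt=\mathbb{E}\big[|X_1|\big(s-\tfrac{1}{|X_1|}\big)_+\big]=\mathbb{E}(s|X_1|-1)_+=s\int_{1/s}^\infty \mathbb{P}(|X_1|>v)\,dv,
$$
which shows that $M$ is a finite-valued convex function with $M(0)=0$ (finiteness using $\mathbb{E}|X_1|<\infty$), so that the Luxemburg functional $\Vert\cdot\Vert_M$ is well defined. We may assume $x\neq 0$ and $X_1$ nondegenerate; then, by continuity of $M$, the quantity $\rho:=\Vert x\Vert_M$ is the unique positive number with $\sum_{i=1}^N M(|x_i|/\rho)=1$. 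Since the $X_i$ are identically distributed, $M(|x_i|/\rho)=\mathbb{E}\big(|x_iX_i|/\rho-1\big)_+$, whence
$$
\mathbb{E}\sum_{i=1}^N\Big(\frac{|x_iX_i|}{\rho}-1\Big)_+=\sum_{i=1}^N M(|x_i|/\rho)=1.
$$

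From this the upper bound $\mathbb{E}\max_i|x_iX_i|\le 2\Vert x\Vert_M$ is immediate: writing $a_i=|x_iX_i|/\rho$ we have, pointwise, $\sum_i(a_i-1)_+\ge(\max_i a_i-1)_+\ge\max_i a_i-1$, and taking expectations and invoking the identity above gives $\rho^{-1}\mathbb{E}\max_i|x_iX_i|-1\le 1$.

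For the lower bound I would integrate the tail, $\mathbb{E}\max_i|x_iX_i|=\int_0^\infty\mathbb{P}(\max_i|x_iX_i|>u)\,du$, use independence together with the elementary two-sided bound $1-\prod_i(1-p_i)\sim\min\{1,\sum_i p_i\}$ for $p_i\in[0,1]$ (the nontrivial direction from $\prod_i(1-p_i)\le e^{-\sum_i p_i}$ and $1-e^{-s}\ge(1-e^{-1})\min\{1,s\}$), and substitute $u=\rho w$ to obtain
$$
\mathbb{E}\max_i|x_iX_i|\sim\rho\int_0^\infty\min\{1,S(w)\}\,dw,\qquad S(w):=\sum_{i=1}^N\mathbb{P}\big(|X_1|>w\rho/|x_i|\big).
$$
The function $S$ is nonincreasing, and the point is that, by the reformulation of $M$,
$$
\int_1^\infty S(w)\,dw=\sum_{i=1}^N\frac{|x_i|}{\rho}\int_{\rho/|x_i|}^\infty\mathbb{P}(|X_1|>v)\,dv=\sum_{i=1}^N M(|x_i|/\rho)=1.
$$
Splitting at $w^\ast:=\sup\{w\ge 0:S(w)\ge 1\}$ then finishes it: if $w^\ast\ge 1$ then $\int_0^\infty\min\{1,S\}\ge\int_0^{w^\ast}1\,dw=w^\ast\ge 1$; if $w^\ast<1$ then $S\le 1$ on $[1,\infty)$, so $\int_0^\infty\min\{1,S\}\ge\int_1^\infty S\,dw=1$. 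In either case the integral is at least $1$, so $\mathbb{E}\max_i|x_iX_i|\ge c\,\Vert x\Vert_M$.

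The only genuine difficulty is the lower bound, and specifically the handling of the truncation $\min\{1,\cdot\}$: a term-by-term treatment collapses it and leaves only $\big(\sum_i|x_i|\big)\mathbb{E}|X_1|$, which is hopelessly large, so one must exploit the collective effect of the truncation at level $1$. The clean device for this is the identity $\int_1^\infty S=\sum_i M(|x_i|/\rho)$ combined with monotonicity of $S$, as above; incidentally the same probability estimate also recovers the upper bound (one checks $w^\ast\le 2$, whence $\int_0^\infty\min\{1,S\}\le 3$), although the two-line argument of the second paragraph is shorter. The remaining points—attainment of the Luxemburg infimum, the measure-zero ambiguity between $\{|X_1|\ge v\}$ and $\{|X_1|>v\}$ under the integral sign, and the degenerate cases $x=0$, $X_1\equiv 0$—are routine.
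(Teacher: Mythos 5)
The paper does not actually prove this statement: it is quoted verbatim as Lemma 5.2 of \cite{GLSW2}, and the text surrounding it only verifies that $M$ is non-negative, convex, continuous with $M(0)=0$, so there is no in-paper proof to compare against. Your argument is a correct, self-contained proof, and it is organized around a reformulation that the paper never writes down but which makes the whole mechanism transparent: the identity $M(s)=\mathbb E(s|X_1|-1)_+ = s\int_{1/s}^\infty\mathbb P(|X_1|>v)\,dv$. From this, the Luxemburg normalization $\sum_i M(|x_i|/\rho)=1$ becomes the clean linearization $\mathbb E\sum_i\bigl(|x_iX_i|/\rho-1\bigr)_+=1$, which gives the upper bound in two lines; and after the tail-integral/independence step the same reformulation yields $\int_1^\infty S(w)\,dw=1$ exactly, so the truncation $\min\{1,S\}$ is dispatched by splitting at $w^\ast$ and using monotonicity. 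All the steps check out: $\sum_i(a_i-1)_+\ge\max_i a_i-1$ for the upper bound; $1-\prod_i(1-p_i)\ge(1-e^{-1})\min\{1,\sum_i p_i\}$ via $\prod(1-p_i)\le e^{-\sum p_i}$ for the lower bound; $S$ nonincreasing and $\int_1^\infty S=1$ force $\int_0^\infty\min\{1,S\}\ge 1$ in both cases of $w^\ast\lessgtr 1$. (Your parenthetical $\le 3$ can in fact be sharpened to $\le 2$, since $\int_0^\infty\min\{1,S\}\le\int_0^1 1+\int_1^\infty S=2$, but that is immaterial.) The only cosmetic caveat is the one the paper itself glosses over: if $X_1$ is essentially bounded then $M$ vanishes on a neighbourhood of $0$ and so is not a strict Orlicz function in the sense defined a few lines above in Section 2; the Luxemburg quantity $\|\cdot\|_M$ is still well defined and your existence/uniqueness argument for $\rho$ is unaffected, so this is harmless, but it is worth being aware of.
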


Obviously, the function
\begin{equation}\label{EQU Orlicz function M}
  M(s)=\int_0^s \int_{\{\frac{1}{t}\leq |X_1|\}} |X_1| \, d\mathbb P \, dt
\end{equation}
is non-negative and convex, since $\int_{\{\frac{1}{t}\leq |X|\}}|X| \,d\mathbb P$ is increasing in $t$. Furthermore, we have
$M(0)=0$ and $M$ is continuous.

As a corollary we obtain the following result:

\begin{cor}\label{CorollaryOrlicz}
Let $X_1,\ldots,X_N$ be independent identically distributed random vectors in $\R^n$ and let $K_N=\textrm{conv}\{\pm X_1,\ldots,\pm X_N\}$. Let $\theta\in S^{n-1}$ and
$$
  M_{\theta}(s)=\int_0^s \int_{\{\frac{1}{t}\leq |\langle X_1,\theta\rangle|\}} |\langle X_1,\theta\rangle| \, d\mathbb P \, dt.
$$
Then, for every $y\in\R^N$,
$$
\E h_{K_{N,y}}(\theta)\sim\Vert y\Vert_{M_\theta}.
$$
\end{cor}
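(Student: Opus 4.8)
The plan is to reduce the statement to a direct application of Theorem \ref{THM Schuett Werner Litvak Gordon}. First I would record the elementary identity for the support function of a symmetric convex hull: for any finite set of points $z_1,\dots,z_N\in\R^n$ and any $\theta\in S^{n-1}$,
$$
h_{\textrm{conv}\{\pm z_1,\dots,\pm z_N\}}(\theta)=\max_{1\le i\le N}|\langle z_i,\theta\rangle|,
$$
which holds because the linear functional $\langle\cdot,\theta\rangle$ attains its maximum over a polytope at one of its vertices. Applying this with $z_i=y_iX_i$ gives
$$
h_{K_{N,y}}(\theta)=\max_{1\le i\le N}|\langle y_iX_i,\theta\rangle|=\max_{1\le i\le N}|y_i\langle X_i,\theta\rangle|.
$$

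Next I would introduce the real-valued random variables $Y_i:=\langle X_i,\theta\rangle$ for $i=1,\dots,N$. Since $X_1,\dots,X_N$ are independent and identically distributed, so are $Y_1,\dots,Y_N$, and they have finite first moment (for instance because in the setting of the paper $X$ is log-concave and hence has finite moments of all orders). Therefore Theorem \ref{THM Schuett Werner Litvak Gordon} applies to the sequence $(Y_i)_{i=1}^N$ with the vector $x=y$: setting
$$
M(s)=\int_0^s\int_{\{\frac1t\le|Y_1|\}}|Y_1|\,d\Pro\,dt,
$$
one obtains $\E\max_{1\le i\le N}|y_iY_i|\sim\|y\|_M$.

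Finally I would observe that $M$ is exactly the Orlicz function $M_\theta$ appearing in the statement, since $|Y_1|=|\langle X_1,\theta\rangle|$. Combining this identification with the support function computation from the first step yields
$$
\E h_{K_{N,y}}(\theta)=\E\max_{1\le i\le N}|y_i\langle X_i,\theta\rangle|\sim\|y\|_{M_\theta},
$$
which is the claim. There is essentially no obstacle here beyond making the two observations above; the only point meriting a word of care is the finite-first-moment hypothesis required to invoke Theorem \ref{THM Schuett Werner Litvak Gordon}, which is automatic for the log-concave vectors the paper works with.
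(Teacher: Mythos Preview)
Your proposal is correct and is exactly the argument the paper has in mind: the corollary is stated immediately after Theorem \ref{THM Schuett Werner Litvak Gordon} without proof, precisely because it follows by applying that theorem to the i.i.d.\ real random variables $Y_i=\langle X_i,\theta\rangle$ after rewriting $h_{K_{N,y}}(\theta)=\max_{1\le i\le N}|y_i\langle X_i,\theta\rangle|$. The only minor remark is that the corollary as stated in the paper does not explicitly assume finite first moment of $\langle X_1,\theta\rangle$, so strictly speaking this should be read as an implicit hypothesis (automatically satisfied in the log-concave setting used throughout), just as you noted.
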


Another important tool we will use is Klartag's central limit theorem from \cite{K}. Here, $\gamma$ stands for the density of the standard Gaussian, i.e., $\gamma(t) = \frac{1}{\sqrt{2\pi}}e^{-\frac{t^2}{2}}$.

\begin{thm}[\cite{K}, Theorem 1.4]\label{THM_CLT}
  Let $n\geq 1$ be an integer and let $X$ be an isotropic, log-concave, random vector in $\R^n$. Then there exists $\Theta\subseteq S^{n-1}$ with $\sigma_{n-1}(\Theta) \geq 1-Ce^{-\sqrt{n}}$ such that for all
  $\theta \in \Theta$, the real valued random variable $\langle X,\theta \rangle$ has a density $f_{\theta}:\R^n \to [0,\infty)$ with the following properties:
  \begin{enumerate}
  \item $\int_{-\infty}^{\infty} |f_{\theta}(t) - \gamma(t)| \, dt \leq \frac{1}{n^{\kappa}}$,
  \item For all $|t|\leq n^{\kappa}$ we have $|\frac{f_{\theta}(t)}{\gamma(t)} - 1| \leq \frac{1}{n^{\kappa}}$.
  \end{enumerate}
  Here, $C,\kappa>0$ are universal constants.
\end{thm}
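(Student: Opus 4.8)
The plan is to deduce both conclusions from two quantitative facts that, once in hand, combine cleanly via the concentration of measure phenomenon on $S^{n-1}$: first, that the \emph{spherical average} of the marginal density,
$$
\bar f(t):=\int_{S^{n-1}}f_\theta(t)\,d\sigma(\theta),
$$
is already pointwise close to $\gamma(t)$ on a large range of $t$; and second, that for each fixed $t$ the function $\theta\mapsto f_\theta(t)$ concentrates sharply around its mean $\bar f(t)$. Given these, I would prove property~(2) for $\theta$ in a set $\Theta$ of measure $\ge 1-Ce^{-\sqrt n}$ by Lévy's lemma together with a union bound over a polynomial-size net of values of $t$ in $[-n^{\kappa},n^{\kappa}]$ and a pass from the net to all $t$ using the (uniform) Lipschitz continuity in $t$ of one-dimensional log-concave densities of variance $1$. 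Property~(1) then follows by integrating~(2) over $|t|\le n^{\kappa}$ and adding the one-dimensional log-concave tail bound on $|t|>n^{\kappa}$: since $X$ is isotropic, $\langle X,\theta\rangle$ has a log-concave density of variance $1$, hence $\int_{|t|>n^{\kappa}}f_\theta\le e^{-c n^{\kappa}}$ for \emph{every} $\theta$, and likewise $\int_{|t|>n^{\kappa}}\gamma\le e^{-cn^{\kappa}}$.

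For the first fact, I would begin with the exact identity relating $\bar f$ to the radial distribution of $X$: letting $\rho$ be the density of the random variable $|X|$, and disintegrating $\langle X,\theta\rangle=|X|\langle X/|X|,\theta\rangle$ with $\theta$ uniform and independent of $X$ (so that $\langle X/|X|,\theta\rangle$ has the density of the first coordinate of a uniform point of $S^{n-1}$),
$$
\bar f(t)=c_n\int_{|t|}^{\infty}\rho(r)\,\frac{1}{r}\Bigl(1-\frac{t^2}{r^2}\Bigr)^{\frac{n-3}{2}}\,dr,
\qquad c_n=\frac{|S^{n-2}|}{|S^{n-1}|}\sim\sqrt{n}.
$$
The analytic input is the \emph{thin-shell estimate} for isotropic log-concave vectors, $\E\bigl(\tfrac{|X|}{\sqrt n}-1\bigr)^2\le Cn^{-\kappa_0}$, together with the exponential concentration $\Pro\bigl(|X|\ge(1+s)\sqrt n\bigr)\le e^{-cs\sqrt n}$. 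Substituting $r=\sqrt n(1+u)$, using $(1-t^2/n)^{(n-3)/2}=e^{-t^2/2}\bigl(1+O(t^4/n)\bigr)$ on $|t|\le n^{\kappa}$, and bounding the contribution of large $|u|$ by the thin-shell variance and the exponential tail, one gets $\bigl|\bar f(t)/\gamma(t)-1\bigr|\le n^{-\kappa}$ on that range, with $\kappa$ a fixed fraction of $\kappa_0$.

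The second fact is the heart of the matter: the spherical variance bound
$$
\int_{S^{n-1}}\bigl|f_\theta(t)-\bar f(t)\bigr|^2\,d\sigma(\theta)\le \gamma(t)^2\,n^{-\kappa}.
$$
The route I would take is to truncate $X$ to the Euclidean ball $\{x\in\R^n:|x|\le R\}$ with $R\sim\sqrt n\log n$, which is legitimate because only exponentially little mass is removed; the renormalised density $\tilde f$ (the old density times the indicator of this ball, rescaled) is still log-concave and nearly isotropic and is now compactly supported. Writing $f_\theta(t)=\int_{\theta^\perp}\tilde f(t\theta+y)\,dy$ up to a negligible error, one uses log-concavity of $\tilde f$ and the support bound to show that $\theta\mapsto f_\theta(t)$ is Lipschitz on $S^{n-1}$ with constant controlled by a power of $R$ times $\gamma(t)$, and then feeds this into the spherical Poincaré inequality (or directly into Lévy's lemma). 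This is precisely the mechanism by which concentration on the sphere converts smallness of the support radius into smallness of the angular oscillation, and it is the step that forces the logarithmic truncation and makes essential use of the thin-shell estimate; I expect this to be the main obstacle, as it is essentially equivalent to the quantitatively hard core of Klartag's theorem.

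Finally I would assemble the pieces. By the concentration just described, for each fixed $t$ the event $\bigl|f_\theta(t)-\bar f(t)\bigr|\le\gamma(t)\,n^{-\kappa'}$ fails on a $\theta$-set of measure at most $e^{-cn^{\alpha}}$. Taking a net $\{t_j\}\subseteq[-n^{\kappa},n^{\kappa}]$ of cardinality polynomial in $n$, intersecting the good events over the net (which only costs a factor polynomial in $n$ in the exceptional measure, hence still leaves measure $\ge 1-Ce^{-\sqrt n}$), and passing from the net to all $t$ in the interval via the uniform Lipschitz continuity in $t$ of $f_\theta$ and of $\bar f$, one obtains a set $\Theta$ on which $\bigl|f_\theta(t)-\bar f(t)\bigr|\le\gamma(t)\,n^{-\kappa'}$ for all $|t|\le n^{\kappa}$. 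Combining with the first fact and the triangle inequality gives $\bigl|f_\theta(t)/\gamma(t)-1\bigr|\le n^{-\kappa}$ for all $|t|\le n^{\kappa}$ and $\theta\in\Theta$, which is~(2); splitting $\int_{\R}|f_\theta-\gamma|$ at $|t|=n^{\kappa}$, with the inner part $\le n^{-\kappa}\int\gamma=n^{-\kappa}$ by~(2) and the outer part $\le e^{-cn^{\kappa}}$ by the log-concave tail bounds, yields~(1). Adjusting $\kappa$ downward once more absorbs all the lost constants.
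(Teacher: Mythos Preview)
This theorem is not proved in the paper: it is quoted as Theorem~1.4 of \cite{K} and used throughout as a ready-made tool (see Lemma~\ref{ProbEstimateCLT} and the arguments in Section~\ref{RandomPerturbations}). There is therefore no proof in the paper to compare your proposal against. Your sketch does follow the broad architecture of Klartag's original argument---control the spherically averaged marginal $\bar f$ via a thin-shell estimate, then control the angular fluctuation $f_\theta-\bar f$ by concentration on the sphere, and assemble via a net in $t$---but since the present paper treats the result as a black box, no comparison is called for here.
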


Finally, in order to prove that the estimates of the expected mean width of a random perturbation of a random polytope hold with high probability, we will need some concentration of measure results for the random vector that defines the perturbation.

The concentration of measure inequality on the sphere states the following:

\begin{thm}\label{THM_Sphere_concentration}
There exist absolute constants $c,C$ such that if $f:\R^N\to\R$ is 1-Lipschitz, then for all $t>0$
$$
\sigma\left(\theta\in S^{N-1}\,:\,|f(\theta)-\mathbb E f(\theta)| \geq t\right)\leq Ce^{-ct^2N}.
$$
\end{thm}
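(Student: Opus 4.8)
The plan is to derive this from the spherical isoperimetric inequality of L\'evy and Gromov, which we take as the single nontrivial input: among all Borel sets $A\subseteq S^{N-1}$ of prescribed measure, a spherical cap minimizes the $\sigma$-measure of the $t$-enlargement $A_t=\{\theta\in S^{N-1}:\mathrm{dist}(\theta,A)\le t\}$. Since $f$ restricted to $S^{N-1}$ is $1$-Lipschitz for the ambient Euclidean (chordal) metric, and the chordal metric is dominated by the geodesic metric, an enlargement in the chordal sense contains the corresponding geodesic enlargement; thus we may apply isoperimetry in whichever form is convenient, and the comparison only improves the final estimate. The remaining work splits into a deviation bound around a median, an elementary spherical cap computation, and the standard transfer from median to mean.

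First I would fix a median $m$ of $f$ and set $A=\{\theta\in S^{N-1}:f(\theta)\le m\}$, so that $\sigma(A)\ge\tfrac12$. If $\theta\in A_t$, choosing $\theta'\in A$ with $|\theta-\theta'|\le t$ gives $f(\theta)\le f(\theta')+t\le m+t$, hence $\{f>m+t\}\subseteq S^{N-1}\setminus A_t$. By isoperimetry, comparing $A$ (of measure $\ge\tfrac12$) to a cap of the same measure, which contains a hemisphere $H$, we get $\sigma(A_t)\ge\sigma(H_t)$, so $\sigma(\{f>m+t\})\le\sigma(S^{N-1}\setminus H_t)$. Applying the same reasoning to $-f$ yields the matching lower tail, and therefore $\sigma(|f-m|\ge t)\le 2\,\sigma(S^{N-1}\setminus H_t)$.

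Next I would estimate $\sigma(S^{N-1}\setminus H_t)$ directly. This set is contained in a cap $\{\theta:\langle\theta,e\rangle\ge s\}$ with $s\ge ct$, and since the density of a single coordinate of a uniform random point on $S^{N-1}$ is proportional to $(1-x^2)^{(N-3)/2}\le e^{-(N-3)x^2/2}$, integrating gives $\sigma(S^{N-1}\setminus H_t)\le Ce^{-ct^2N}$. Combined with the previous step, $\sigma(|f-m|\ge t)\le Ce^{-ct^2N}$.

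Finally I would pass from the median to the mean: integrating the tail bound, $|\E f-m|\le\E|f-m|=\int_0^\infty\sigma(|f-m|\ge t)\,dt\le C/\sqrt N$, so $\sigma(|f-\E f|\ge t)\le\sigma(|f-m|\ge t-C/\sqrt N)$. For $t\ge 2C/\sqrt N$ this is at most $Ce^{-c(t/2)^2N}$, while for $t<2C/\sqrt N$ the probability is trivially at most $1\le e^{4cC^2}e^{-ct^2N}$; absorbing constants gives the stated inequality. The only genuinely hard ingredient is the spherical isoperimetric inequality itself, which I would invoke as a known theorem (L\'evy--Gromov; see \cite{MS}); everything else is the routine cap estimate and the classical median-to-mean transference.
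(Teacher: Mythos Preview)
The paper does not give its own proof of this statement: Theorem~\ref{THM_Sphere_concentration} is listed in Section~\ref{Preliminaries} as a known preliminary result (the classical L\'evy--Milman concentration inequality on the sphere) and is simply quoted without argument. So there is nothing in the paper to compare your proof against.

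That said, your proposal is correct and is exactly the standard derivation: reduce to deviation from a median via the L\'evy--Gromov isoperimetric inequality, bound the complement of the $t$-enlargement of a hemisphere by a direct cap estimate using the density $(1-x^2)^{(N-3)/2}\le e^{-(N-3)x^2/2}$, and then transfer from median to mean by integrating the tail. Your handling of the chordal-versus-geodesic metric is fine (and in fact the extremal sets for either metric are caps, so one can work directly in the chordal metric). The only external ingredient you invoke---spherical isoperimetry---is precisely what the paper's reference \cite{MS} provides; this is the standard and expected proof.
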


As a consequence, since the Gaussian measure is rotationally invariant, we have the following:

\begin{thm}\label{THM_Gaussian_concentration}
There exist absolute constants $c,C$ such that if $f:\R^N\to\R$ is 1-Lipschitz and $G$ is a standard Gaussian random vector in $\R^N$, then for all $t>0$
$$
\Pro_G\left(|f(G)-\mathbb E f(G)| \geq t\right)\leq Ce^{-ct^2}.
$$
\end{thm}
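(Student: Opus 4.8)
The statement is the classical dimension-free concentration inequality for the Gaussian measure, and, as the sentence preceding it indicates, the natural route is to deduce it from the concentration inequality on the sphere, Theorem~\ref{THM_Sphere_concentration}, via Poincar\'e's limit procedure. The point is that a standard Gaussian vector in $\R^N$ is the weak limit, as $M\to\infty$, of the projection onto the first $N$ coordinates of a random vector distributed uniformly on the sphere $\sqrt M\,S^{M-1}$ of radius $\sqrt M$ in $\R^M$; the radius $\sqrt M$ is precisely the scaling that turns the dimension-dependent concentration on $S^{M-1}$ into a dimension-free statement, which is what survives the passage to the limit.

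First I would fix a $1$-Lipschitz $f:\R^N\to\R$ and, for $M>N$, extend it to the $1$-Lipschitz function $\tilde f:\R^M\to\R$, $\tilde f(x_1,\dots,x_M):=f(x_1,\dots,x_N)$. Writing $\sigma_M$ for the uniform probability measure on $\sqrt M\,S^{M-1}$ and $\mu_M:=\int \tilde f\,d\sigma_M$, the map $\omega\mapsto \tilde f(\sqrt M\,\omega)/\sqrt M$ is $1$-Lipschitz on $S^{M-1}$, so Theorem~\ref{THM_Sphere_concentration} applied on $S^{M-1}$ and rescaled gives, for every $t>0$,
$$
\sigma_M\Bigl(x\in\sqrt M\,S^{M-1}\,:\,|\tilde f(x)-\mu_M|\geq t\Bigr)\leq C e^{-c(t/\sqrt M)^2 M}=C e^{-ct^2},
$$
a bound uniform in $M$. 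Next I would pass to the limit $M\to\infty$. Since the oscillation of $\tilde f$ around $f(0)$ is controlled by the expected Euclidean norm of the first $N$ coordinates of a point of $\sqrt M\,S^{M-1}$, which does not exceed $\sqrt N$, the sequence $(\mu_M)_M$ is bounded; the displayed inequality then shows that the laws of $\tilde f$ under $\sigma_M$ have uniformly sub-Gaussian tails and hence are uniformly integrable. By Poincar\'e's lemma the pushforward of $\sigma_M$ under the projection onto the first $N$ coordinates converges weakly to the standard Gaussian law on $\R^N$, so, $f$ being continuous, $\mu_M\to \E_G f(G)=:m$ and, in particular, $\E_G f(G)$ is finite.

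Finally, for fixed $t>0$ and $\varepsilon>0$ we have, for $M$ large enough that $|\mu_M-m|<\varepsilon$, the inclusion $\{|\tilde f-m|\geq t\}\subseteq\{|\tilde f-\mu_M|\geq t-\varepsilon\}$, so $\sigma_M(\{|\tilde f-m|\geq t\})\leq C e^{-c(t-\varepsilon)^2}$; applying the portmanteau theorem to the closed set $\{x\in\R^N\,:\,|f(x)-m|\geq t\}$ together with the weak convergence above yields $\Pro_G(|f(G)-\E_G f(G)|\geq t)\leq C e^{-c(t-\varepsilon)^2}$, and letting $\varepsilon\to0$ finishes the proof. The only delicate point is this limiting step: one must respect the fact that indicator functions of half-lines are not continuous (handled by using a closed set in the portmanteau theorem together with the $\varepsilon$-slack) and must identify the limiting centering constant with the Gaussian mean (handled by the uniform integrability that comes for free from the dimension-free tail bound). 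An alternative that avoids the limit altogether is to invoke the Gaussian isoperimetric inequality of Borell and Sudakov--Tsirelson, or the Gaussian logarithmic Sobolev inequality with Herbst's argument, but the deduction from Theorem~\ref{THM_Sphere_concentration} is the one matching the remark that precedes the statement.
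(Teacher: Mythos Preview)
The paper does not give a proof of this theorem; it simply records it as a preliminary result, prefaced by the remark that, ``since the Gaussian measure is rotationally invariant,'' it follows from the spherical concentration inequality (Theorem~\ref{THM_Sphere_concentration}). Your argument via Poincar\'e's limit is precisely the standard way to make that remark rigorous, and the details you supply (uniform sub-Gaussian tails giving uniform integrability so that the centering constants converge, and the $\varepsilon$-slack plus portmanteau to handle the closed set) are correct and complete.
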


In \cite{SZ2}, the following concentration of measure result on $B_p^N$ ($1\leq p\leq 2$) was proved. The case $2\leq p<\infty$, for $0<t<2$ is a consequence of the concentration of measure theorem in uniformly convex spaces proved in \cite{GM}:

\begin{thm}\label{THM_B_p_concentration}
There exist absolute constants $c,C$ such that if $f:B_p^N\to\R$ is 1-Lipschitz and $y$ is a random vector uniformly distributed on $B_p^N$ ($1\leq p\leq2$), then for all $t>0$
$$
\Pro_y\left(|f(y)-\mathbb E f(y)| \geq t\right)\leq Ce^{-\frac{c^pt^pN}{p}}.
$$
\end{thm}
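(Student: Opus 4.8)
The plan is to reduce the problem to a concentration estimate for a product measure by means of the probabilistic representation of the uniform measure on $B_p^N$, which is how this result is obtained in \cite{SZ2}. Let $g_1,\dots,g_N$ be i.i.d.\ real random variables with density proportional to $e^{-|t|^p}$, let $E$ be an independent random variable with density $e^{-s}$ on $[0,\infty)$, and set $g=(g_1,\dots,g_N)$. By a classical fact (see \cite{SZ2}),
$$
Y:=\frac{g}{\bigl(\|g\|_p^p+E\bigr)^{1/p}}
$$
is uniformly distributed on $B_p^N$, while $g/\|g\|_p$ is uniform on $\partial B_p^N$ and independent of $\|g\|_p$, so that, with $U$ uniform on $[0,1]$ independent of $g$, one also has $Y\stackrel{d}{=}U^{1/N}\,g/\|g\|_p$. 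It therefore suffices to bound the deviation probability of $F(g,E):=f\bigl(g/(\|g\|_p^p+E)^{1/p}\bigr)$ under the product of $\nu_p^{\otimes N}$ with the law of $E$, where $\nu_p$ denotes the law with density proportional to $e^{-|t|^p}$.

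Since $p\le 2$ we have $B_p^N\subseteq B_2^N$, hence $|f(y)-\E f(y)|\le 2$ for all $y$, and only $t\le 2$ needs to be considered. I would first dispose of the radial variable: for $\omega\in\partial B_p^N$ one has $\|\omega\|_2\le\|\omega\|_p=1$, so $|f(U^{1/N}\omega)-f(\omega)|\le|U^{1/N}-1|$ and $|\E f(Y)-\E f(\omega)|\le\E(1-U^{1/N})=1/(N+1)$; since $\Pro(|U^{1/N}-1|\ge s)=(1-s)^N\le e^{-sN}$, passing from $Y$ to a point $\omega$ uniform on $\partial B_p^N$ costs only an additive error of probability $e^{-ctN}$, which for $t\le2$ is absorbed into $Ce^{-c^pt^pN/p}$ after adjusting constants (here $p\in[1,2]$ is treated as an absolute constant). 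Thus it remains to prove the estimate for $f(g/\|g\|_p)$ under $\nu_p^{\otimes N}$.

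For this I would combine a Lipschitz bound for the normalization map $\Phi\colon g\mapsto g/\|g\|_p$ with the concentration of $\nu_p^{\otimes N}$. Writing $u$ for the vector with $u_k=|g_k|^{p-1}\mathrm{sgn}(g_k)$, differentiation gives $D\Phi(g)=\|g\|_p^{-1}I-\|g\|_p^{-(p+1)}\,g\otimes u$, hence $\|D\Phi(g)\|_{\mathrm{op}}\le\|g\|_p^{-1}\bigl(1+\|g\|_2\,\|u\|_2\,\|g\|_p^{-p}\bigr)$. On the bulk of $\nu_p^{\otimes N}$ one has $\|g\|_p^p\sim N/p$, while $\|g\|_2$ and $\|u\|_2$ are of order $\sqrt N$ (the moments $\E|g_1|^q$ for $0\le q\le 2$ being bounded uniformly for $p\in[1,2]$), so the bracket is $O(1)$ and $\Phi$ is $O(N^{-1/p})$-Lipschitz on a set of $\nu_p^{\otimes N}$-measure at least $1-e^{-cN}$. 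Invoking now that $\nu_p$, for $1\le p\le2$, satisfies a modified logarithmic Sobolev (equivalently, a suitable transportation--entropy) inequality which tensorizes — so that every $1$-Lipschitz $h\colon\R^N\to\R$ obeys $\nu_p^{\otimes N}(|h-\E h|\ge s)\le C\exp(-c\min(s^2,s^p))$ (Talagrand; Bobkov and Ledoux for $p=1$; Gozlan) — and rescaling by the Lipschitz constant of order $N^{-1/p}$, the bound becomes, using $p\in[1,2]$, of the form $C\exp(-c\,t^pN/p)$ in the relevant range $t\gtrsim N^{-1/p}$; for smaller $t$ the claimed inequality is vacuous.

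The hard part is that this last argument is not quite rigorous as stated: the Lipschitz constant of $\Phi$ genuinely deteriorates, to order $N^{-1/2}$, on $\nu_p^{\otimes N}$-atypical configurations — for instance vectors $g$ with a single coordinate of size $\sim N^{1/p}$ — so $\Phi$ is not globally $O(N^{-1/p})$-Lipschitz and the product-measure concentration cannot be applied verbatim. One must instead carry out a concentration-with-a-bad-set argument: replace $f(g/\|g\|_p)$ by a function that is $O(N^{-1/p})$-Lipschitz everywhere and agrees with it on a good set, bound the resulting difference of means using that $f$ is bounded by $2$ on $B_p^N$, and check that the bad event has probability at most $Ce^{-c^pt^pN/p}$ for every $t\le 2$. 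This forces a delicate balance between the measure of the good set and the Lipschitz bound it supplies, and it is precisely here that \cite{SZ2} proceeds by conditioning/induction relating $\partial B_p^N$ to $B_p^{N-1}$. Granting this, the theorem follows by a routine union bound over the above steps.
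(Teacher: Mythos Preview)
The paper does not prove this statement at all: Theorem~\ref{THM_B_p_concentration} is stated in the preliminaries section and attributed to Schechtman and Zinn \cite{SZ2}, with no proof given. So there is no ``paper's own proof'' to compare against; the only question is whether your proposal constitutes a valid independent proof.

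It does not, and you say so yourself. The reduction from $B_p^N$ to $\partial B_p^N$ via the radial variable is correct and cheap, and the probabilistic representation $Y=g/(\|g\|_p^p+E)^{1/p}$ is the right starting point. But the heart of the matter is exactly the step you flag as ``not quite rigorous'': the normalization map $\Phi(g)=g/\|g\|_p$ is \emph{not} globally $O(N^{-1/p})$-Lipschitz in the Euclidean metric, and the product-measure concentration inequality you quote applies to global Lipschitz functions. Your proposed fix --- restrict to a good set, extend by a globally Lipschitz function, control the bad set --- is the natural idea, but you neither carry it out nor verify that the bad set has the required probability $Ce^{-c^pt^pN/p}$ uniformly in $t\le 2$. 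Writing ``granting this, the theorem follows'' is precisely granting the theorem. The actual argument in \cite{SZ2} avoids this difficulty by a different route: it proves concentration on the \emph{sphere} $\partial B_p^N$ directly via an inductive scheme that relates the cone measure on $\partial B_p^N$ to the normalized volume on $B_p^{N-1}$ (using that the marginal of the cone measure along a coordinate direction is explicit), rather than trying to push product-measure concentration through the normalization map. Your last sentence alludes to this, but alluding to the correct proof is not the same as giving one.
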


The corresponding concentration of measure result for $p$-stable random vectors can be found in \cite{HM}, which we state here not in its most general form:

\begin{thm} \label{THM_p_stable_concentration}
Let $y=(\xi_1,\dots,\xi_N)$ be a random vector, with $\xi_1,\dots\xi_N$ independent symmetric normalized $p$-stable random variables, $p >\frac 3 2$, and let $M=\frac{1}{2-p}$. Let $f:\R^N \to \R$ be a 1-Lipschitz function. Then there exists a constant $C>0$ such that
$$
\mathbb P_y \left( |f(y)-\mathbb E f(y)| \geq t\right) \leq C \frac{1}{t^p},
$$
for all $t$ with $t^p \geq 4 M \log(M) \log(1+2M\log(M))$.
\end{thm}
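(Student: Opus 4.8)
This is the concentration estimate of \cite{HM}; I outline the route one would take to prove it. The heavy tails of the $\xi_i$ rule out any direct Gaussian-type argument, so the first step is a truncation that peels off the heavy part. Fix a level $R>0$, to be chosen comparable to $t$, split each coordinate as $\xi_i=\bar\xi_i+\hat\xi_i$ with $\bar\xi_i=\xi_i\mathbf{1}_{\{|\xi_i|\le R\}}$ and $\hat\xi_i=\xi_i\mathbf{1}_{\{|\xi_i|>R\}}$, and set $\bar y=(\bar\xi_1,\dots,\bar\xi_N)$. Since $f$ is $1$-Lipschitz,
$$
|f(y)-f(\bar y)|\le\Bigl(\sum_{i=1}^N\xi_i^2\mathbf{1}_{\{|\xi_i|>R\}}\Bigr)^{1/2}=:|\hat y|,\qquad|\E f(y)-\E f(\bar y)|\le\E|\hat y|,
$$
so that
$$
\mathbb P\bigl(|f(y)-\E f(y)|\ge t\bigr)\le\mathbb P\Bigl(\max_{i\le N}|\xi_i|>R\Bigr)+\mathbb P\bigl(|f(\bar y)-\E f(\bar y)|\ge t-\E|\hat y|\bigr).
$$
It remains to estimate the three ingredients $\mathbb P(\max_i|\xi_i|>R)\le N\,\mathbb P(|\xi_1|>R)$, the truncation error $\E|\hat y|\le\bigl(N\,\E[\xi_1^2\mathbf{1}_{\{|\xi_1|>R\}}]\bigr)^{1/2}$, and the concentration of $f(\bar y)$. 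The first two follow from the tail asymptotics $\mathbb P(|\xi_1|>s)\sim c_p\,s^{-p}$ of a normalized symmetric $p$-stable law, in which $c_p\asymp 2-p$ as $p\to2$, together with the induced estimate $\E[\xi_1^2\mathbf{1}_{\{|\xi_1|>R\}}]=\int_R^\infty 2s\,\mathbb P(|\xi_1|>s)\,ds+R^2\,\mathbb P(|\xi_1|>R)\asymp\frac{c_p}{2-p}R^{2-p}$; here the assumption $p>\tfrac32$ is what guarantees finiteness of the moments used, and the interplay between $c_p\asymp 2-p$ and $\frac{c_p}{2-p}\asymp 1$ is the mechanism by which $M=\frac1{2-p}$ enters the final condition on $t$.

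For the last ingredient, $f(\bar y)$ is a $1$-Lipschitz function of the \emph{bounded} independent coordinates $\bar\xi_i\in[-R,R]$, now of finite variance $\sigma_R^2=\E[\bar\xi_1^2]\asymp\frac{c_p}{2-p}R^{2-p}$, so a Bernstein/Talagrand-type concentration inequality for Lipschitz functions of bounded independent variables gives
$$
\mathbb P\bigl(|f(\bar y)-\E f(\bar y)|\ge s\bigr)\le C\exp\Bigl(-c\min\bigl\{\tfrac{s^2}{N\sigma_R^2},\tfrac sR\bigr\}\Bigr),
$$
and one also has $\E|\hat y|\lesssim(N\sigma_R^2)^{1/2}$. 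For the functions $f$ relevant to Theorem~\ref{THM_p-stable_perturbation}, which are averages over $\theta\in S^{n-1}$ of maxima of linear forms in the perturbing vector and hence convex, the last display is exactly Talagrand's concentration inequality for convex Lipschitz functions; for general Lipschitz $f$ one uses a version of it valid without convexity. (Alternatively one may use the representation $\xi_i\stackrel{d}{=}\sqrt{W_i}\,g_i$ with $W_i$ positive $\tfrac p2$-stable and $g_i$ standard Gaussian and apply Theorem~\ref{THM_Gaussian_concentration} conditionally on $W$, but one then meets the same difficulty in controlling the mixing vector.)

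Finally one balances the parameters. Taking $R$ of the order of $t$, the hypothesis $t^p\ge 4M\log M\,\log(1+2M\log M)$ is precisely the threshold ensuring, on the one hand, that $\E|\hat y|\le t/2$, so that the last display is applied at scale $t/2$, and, on the other hand, that $\min\{t^2/(N\sigma_R^2),\,t/R\}$ exceeds a fixed multiple of $\log(t^p/N)$ — that is, that the Gaussian/sub-exponential ``core'' of the $p$-stable law has been left behind — so that the sub-exponential term is absorbed into the power-law term $N\,\mathbb P(|\xi_1|>R)$; after optimizing in $R$ the whole bound is dominated by this power law and one obtains $\mathbb P(|f(y)-\E f(y)|\ge t)\le C/t^p$. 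The main obstacle is exactly this parameter bookkeeping: as $p\uparrow2$ the stable tail becomes lighter by a factor $2-p=1/M$ while the truncated second moment $\sigma_R^2$ grows by a factor $M$, and one has to check that the logarithmic-in-$M$ quantity $4M\log M\log(1+2M\log M)$ is the sharp crossover at which the power-law contribution overtakes the exponential one — extracting this precise threshold, rather than merely one of the correct order, is the delicate part of the argument.
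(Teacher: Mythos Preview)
The paper does not prove this statement. It is quoted in the preliminaries with the sentence ``The corresponding concentration of measure result for $p$-stable random vectors can be found in \cite{HM}, which we state here not in its most general form'', and is then used only as a black box in the proof of Theorem~\ref{THM_p-stable_perturbation}. There is therefore no argument in the paper to compare your sketch against; your opening line already identifies the situation correctly.

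Two remarks on the sketch itself. First, the actual argument in \cite{HM} does not truncate the coordinates $\xi_i$ directly; it exploits the infinite divisibility of the stable law, splitting the L\'evy (jump) measure into small and large jumps and running a covariance/semigroup interpolation on the small-jump part. That L\'evy-measure decomposition is what produces the precise threshold in $M=\tfrac{1}{2-p}$ that you quote. Second, your coordinate-wise truncation does not close at the stated threshold: every piece carries a factor of $N$. The union bound gives $N\,\Pro(|\xi_1|>R)\asymp Nc_pR^{-p}$, the truncation error is $\E|\hat y|\lesssim (NR^{2-p})^{1/2}$, and the Bernstein exponent is $\min\{t^2/(N\sigma_R^2),\,t/R\}$. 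With $R$ of order $t$ the requirement $\E|\hat y|\le t/2$ already forces $t^p\gtrsim N$, which is much stronger than the dimension-free condition $t^p\ge 4M\log M\,\log(1+2M\log M)$, and the resulting tail is of order $N/t^p$ rather than $1/t^p$. So the claim in your last paragraph that this threshold ``is precisely'' what makes $\E|\hat y|\le t/2$ is where the outline breaks; the parameter bookkeeping you flag as delicate in fact cannot be carried out within this scheme. For the application in this paper the discrepancy is harmless, since there the relevant scale is $t\asymp N^{1/p}/\sqrt{\log N}$ anyway, but it does not recover the theorem as displayed.
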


\section{Random Perturbations of Random Polytopes}\label{RandomPerturbations}

In this section we will prove Theorems \ref{THM_Gaussian_perturbation},\ref{THM_Sphere_perturbation} \ref{THM_B_p_perturbation} and \ref{THM_p-stable_perturbation}. The proofs of the three results follow the same lines: we consider the function $f:\R^N\to\R$
$$
f(y)=\E_{X_1,\dots,X_N}w(K_{N,y})
$$
and we apply the concentration of measure theorems to $\frac{f}{L}$, where $L$ is the Lipschitz constant of $f$. In order to do that we need to compute the expectation of $f$ in the three cases and the value of $L$. In the following lemma we compute the value of $L$.

\begin{lemma}\label{LemmaLipsichtzConstant}
Let $X_1,\dots, X_N$ ($n\leq N\leq e^{\sqrt n}$) be independent copies of an isotropic log-concave random vector in $\R^n$, and let $f:\R^N\to\R$ be the function $f(y)=\E_{X_1,\dots,X_N}w(K_{N,y})$. Then there exists an absolute constant $C$ such that for any $y_1,y_2\in\R^N$ we have
$$
|f(y_1)-f(y_2)|\leq C\sqrt{\log N}|y_1-y_2|.
$$
\end{lemma}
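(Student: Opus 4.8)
The plan is to reduce the Lipschitz estimate for $f(y) = \E_{X_1,\dots,X_N} w(K_{N,y})$ to a pointwise estimate for the support function and then integrate over the sphere $S^{n-1}$. First I would use the integral representation $w(K_{N,y}) = \int_{S^{n-1}} h_{K_{N,y}}(\theta)\, d\sigma(\theta)$ together with Fubini's theorem to write $f(y) = \int_{S^{n-1}} \E_{X_1,\dots,X_N} h_{K_{N,y}}(\theta)\, d\sigma(\theta)$. By Corollary 1.2, for each fixed $\theta \in S^{n-1}$ we have $\E h_{K_{N,y}}(\theta) \sim \|y\|_{M_\theta}$, where $M_\theta$ is the Orlicz function built from the one-dimensional marginal $\langle X_1, \theta\rangle$. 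So it suffices to estimate, for each $\theta$, the Lipschitz constant of $y \mapsto \|y\|_{M_\theta}$ as a function on $\R^N$, and then observe that a norm is automatically $1$-Lipschitz with respect to itself, hence Lipschitz with respect to the Euclidean norm with constant equal to the norm of the "worst" unit Euclidean vector. Concretely, $\big| \|y_1\|_{M_\theta} - \|y_2\|_{M_\theta} \big| \leq \|y_1 - y_2\|_{M_\theta} \leq \big(\sup_{|z| = 1} \|z\|_{M_\theta}\big)\, |y_1 - y_2|$, and by testing on coordinate vectors and on the normalized all-ones vector one checks $\sup_{|z|=1}\|z\|_{M_\theta} \sim \|e_1\|_{M_\theta} \vee \sqrt{N}\,\big\|\tfrac{1}{\sqrt N}(1,\dots,1)\big\|_{M_\theta}$, which reduces everything to understanding $M_\theta^{-1}(1/N)$ and $M_\theta^{-1}(1)$.

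The heart of the matter is therefore a two-sided estimate on the Orlicz function $M_\theta$ coming from an isotropic log-concave marginal. The key input is Klartag's central limit theorem (Theorem 1.4): for $\theta$ in a subset $\Theta \subseteq S^{n-1}$ of measure at least $1 - Ce^{-\sqrt n}$, the density $f_\theta$ of $\langle X_1,\theta\rangle$ is within $n^{-\kappa}$ of the standard Gaussian density in the sense of (1) and (2). For such $\theta$, I would show that $M_\theta(s)$ is comparable, up to absolute constants and on the relevant range of $s$, to the analogous Orlicz function $M_\gamma(s) = \int_0^s \int_{\{1/t \le |g|\}} |g|\, d\Pro\, dt$ associated to a standard Gaussian $g$. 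A direct computation (or a standard tail estimate) gives $\int_{\{|g|\geq u\}} |g|\, d\Pro \sim e^{-u^2/2}$ for $u \geq 1$, and integrating in $t$ yields $M_\gamma(s) \sim \frac{1}{s^2} e^{-s^2/2} \cdot (\text{lower order})$ for large $s$; what one actually needs is the inverse behavior: $M_\gamma^{-1}(1/N) \sim \sqrt{\log N}$ and $M_\gamma^{-1}(1) \sim 1$. Using Corollary 1.2 on the canonical basis this recovers $\E\max_i |\langle X_i,\theta\rangle| \sim \sqrt{\log N}$, the expected order of the mean width normalization, and it gives $\sup_{|z|=1}\|z\|_{M_\theta} \sim \sqrt{\log N}$, which is exactly the asserted constant $C\sqrt{\log N}$. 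For the exceptional $\theta \notin \Theta$ one only needs a crude bound: $h_{K_{N,y}}(\theta) \le \max_i |y_i|\, |\langle X_i,\theta\rangle|$, and using $\E |\langle X_1,\theta\rangle|^2 = 1$ with, say, a union bound and the $\psi_1$-behavior of log-concave marginals gives $\E \max_i |\langle X_i,\theta\rangle| \lesssim \log N \le \sqrt{n}\,\sqrt{\log N}$; since this exceptional set has measure $e^{-\sqrt n}$, its contribution to $w(K_{N,y})$ is negligible and in particular its Lipschitz contribution is absorbed. The bound $N \le e^{\sqrt n}$, i.e. $\log N \le \sqrt n$, is precisely what makes this trade-off work.

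Finally I would assemble the pieces: write $f(y_1) - f(y_2) = \int_{\Theta}(\cdots)\,d\sigma + \int_{S^{n-1}\setminus\Theta}(\cdots)\,d\sigma$, bound the first integral by $C\sqrt{\log N}\,|y_1 - y_2|$ using the Orlicz-norm Lipschitz estimate valid on $\Theta$, and bound the second by the crude estimate times $\sigma(S^{n-1}\setminus\Theta) \le Ce^{-\sqrt n}$, which under $N \le e^{\sqrt n}$ is also at most $C\sqrt{\log N}\,|y_1-y_2|$ (in fact far smaller). I expect the main obstacle to be the careful analysis of the Orlicz function $M_\theta$ and the verification that the comparison with the Gaussian model $M_\gamma$ holds on exactly the range of arguments $s$ that controls $\|\cdot\|_{M_\theta}$ on Euclidean balls — in particular making the inverse-function estimates $M_\theta^{-1}(1/N) \sim \sqrt{\log N}$ rigorous using only the $L_1$-closeness and pointwise-ratio closeness of densities supplied by Theorem 1.4, rather than full control of the tails. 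Handling the regime $|t|$ slightly beyond $n^\kappa$, where Klartag's pointwise estimate no longer applies, will require a supplementary tail bound for log-concave marginals (e.g. via Borell's lemma), and checking that it does not spoil the constant is the delicate point.
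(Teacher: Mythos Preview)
Your route is far more elaborate than the paper's, and it contains a concrete error. The paper's proof is a three-line argument that avoids Orlicz norms and Klartag's CLT entirely: starting from the pointwise inequality $\big|\max_i|a_i|-\max_i|b_i|\big|\le\max_i|a_i-b_i|$, one gets
\[
|f(y_1)-f(y_2)|\le \E\, w(K_{N,\,y_1-y_2}) = |y_1-y_2|\,\E\,w(K_{N,u})
\]
for $u=(y_1-y_2)/|y_1-y_2|\in S^{N-1}$. Since $|u_i|\le 1$ for every $i$, one has the containment $K_{N,u}\subseteq K_N$, hence $\E\,w(K_{N,u})\le \E\,w(K_N)$. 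The final step is the (self-contained) bound $\E\,w(K_N)\le C\,w(Z_{\log N}(X))\sim\sqrt{\log N}$ for $N\le e^{\sqrt n}$, obtained by the usual $p=\log N$ trick and Paouris' estimate on $w(Z_p)$. No central limit theorem, no splitting into good and bad directions, no Orlicz analysis.

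Your argument, by contrast, has a genuine gap at the step where you claim
\[
\sup_{|z|=1}\|z\|_{M_\theta}\;\sim\;\|e_1\|_{M_\theta}\ \vee\ \sqrt{N}\,\Big\|\tfrac{1}{\sqrt N}(1,\dots,1)\Big\|_{M_\theta}.
\]
By homogeneity the right-hand side equals $\|e_1\|_{M_\theta}\vee\|(1,\dots,1)\|_{M_\theta}$, and $(1,\dots,1)$ has Euclidean norm $\sqrt N$, not $1$; so it cannot serve as a candidate for the supremum over the \emph{Euclidean} unit sphere. In fact, for $\theta$ in the Klartag good set one has $\|e_1\|_{M_\theta}\sim 1$ and $\|\tfrac{1}{\sqrt N}(1,\dots,1)\|_{M_\theta}\sim\sqrt{\log N}/\sqrt N$, and more generally a Cauchy--Schwarz argument gives $\E\max_i|z_i\langle X_i,\theta\rangle|\le\big(\sum_i z_i^2\,\E\langle X_i,\theta\rangle^2\big)^{1/2}=|z|$ for \emph{every} $\theta$, so the per-direction Lipschitz constant is $O(1)$, not $\sqrt{\log N}$. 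Your conclusion ``$\sup_{|z|=1}\|z\|_{M_\theta}\sim\sqrt{\log N}$'' is therefore incorrect, and the rest of the machinery (comparing $M_\theta$ with $M_\gamma$, handling the exceptional set via $\psi_1$ tails) is unnecessary for this lemma. A secondary issue: the equivalence $\E h_{K_{N,y}}(\theta)\sim\|y\|_{M_\theta}$ does not by itself transfer Lipschitz constants; you would need to use explicitly that $y\mapsto\E h_{K_{N,y}}(\theta)$ is already a seminorm, which is exactly the observation the paper exploits directly.
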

\begin{proof}
Let $y_1,y_2\in\R^N$ be any two vectors
\begin{eqnarray*}
|f(y_1)-f(y_2)|&=&\left|\int_{S^{n-1}}\E_{X_1,\dots,X_N}\left(\max_{1\leq i\leq N}|\langle y_1(i)X_i,\theta\rangle|-\max_{1\leq i\leq N}|\langle y_2(i)X_i,\theta\rangle|\right) \, d\sigma(\theta)\right|\cr
&\leq&\int_{S^{n-1}}\E_{X_1,\dots,X_N}\left|\max_{1\leq i\leq N}|\langle y_1(i)X_i,\theta\rangle|-\max_{1\leq i\leq N}|\langle y_2(i)X_i,\theta\rangle|\right| \,d\sigma(\theta)\cr
&\leq&\int_{S^{n-1}}\E_{X_1,\dots,X_N}\max_{1\leq i\leq N}|\langle (y_1(i)-y_2(i))X_i,\theta\rangle| \,d\sigma(\theta)\cr
&\leq&\max_{u\in S^{N-1}}\int_{S^{n-1}}\E_{X_1,\dots,X_N}\max_{1\leq i\leq N}|\langle u_iX_i,\theta\rangle| \, d\sigma(\theta)|y_1-y_2|\cr
&=&\max_{u\in S^{N-1}}\E_{X_1,\dots,X_N}w(K_{N,u})|y_1-y_2|\cr
&\leq&\E_{X_1,\dots,X_N}w(K_{N})|y_1-y_2|,\cr
\end{eqnarray*}
since $|u_i|\leq 1$ for all $1\leq i\leq N$. Since $\E_{X_1,\dots,X_N}w(K_{N})\leq C\sqrt{\log N}$ the result follows (see \cite{DGT2} for a proof in the context of random polytopes in isotropic convex bodies). For the sake of completeness we provide here a proof of this fact in the general context of isotropic log-concave vectors:
take $p=\log N$
\begin{eqnarray*}
\E_{X_1,\dots,X_N}w(K_{N})&=&\int_{S^{n-1}}\E_{X_1,\dots,X_N}\max_{1\leq i\leq N}|\langle X_i,\theta\rangle| \, d\sigma(\theta)\cr
&\leq &C\int_{S^{n-1}}\E_{X_1,\dots,X_N}\left(\sum_{i=1}^N|\langle X_i,\theta\rangle|^p\right)^\frac{1}{p} \, d\sigma(\theta)\cr
&\leq &C\int_{S^{n-1}}\left(\E_{X_1,\dots,X_N}\sum_{i=1}^N|\langle X_i,\theta\rangle|^p\right)^\frac{1}{p} \, d\sigma(\theta)\cr
&=& CN^\frac{1}{p}\int_{S^{N-1}}\left(\E_{X_1}|\langle X_1,\theta\rangle|^p\right)^\frac{1}{p} \, d\sigma(\theta)\cr
&=& Cw(Z_{\log N}(X)).
\end{eqnarray*}
Now $w(Z_{\log N}(X))\sim\sqrt{\log N}$ whenever $1\leq N\leq e^{\sqrt n}$ (see \cite{Pa} for a proof in the context of random vectors in an isotropic convex body. The same proof works for a general isotropic log-concave random vector).
\end{proof}

The following lemma was essentially proved in \cite{AGP2}. Since it will be crucial in order to estimate $\E f(y)$ when $y$ is a random perturbation, we include the proof here. It is based on Klartag's central limit theorem.

\begin{lemma}\label{ProbEstimateCLT}
Let $X$ be an isotropic log-concave random vector in $\R^n$. If $n\leq N\leq n^\delta$ there exists a set of directions $\Theta\subseteq S^{n-1}$ with $\sigma(\Theta)\geq 1-Ce^{-\sqrt n}$ and a constant $\kappa$ such that if $\theta\in\Theta$ and $\alpha^2 < \frac{\kappa}{\delta}$ then
$$
\Pro\left(| \langle X,\theta\rangle| \geq \alpha\sqrt{\log N}\right) \geq\frac{c}{N^{\frac{\alpha^2}{2}}\sqrt{\log N}}\,.
$$
\end{lemma}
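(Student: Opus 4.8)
The plan is to invoke Klartag's central limit theorem (Theorem \ref{THM_CLT}) to compare the one-dimensional marginal $\langle X,\theta\rangle$ with a standard Gaussian, and then to carry out an elementary Gaussian tail computation. First I would take $\Theta\subseteq S^{n-1}$ and the universal constant $\kappa>0$ to be exactly those produced by Theorem \ref{THM_CLT}, so that $\sigma(\Theta)\geq 1-Ce^{-\sqrt n}$ and, for every $\theta\in\Theta$, the random variable $\langle X,\theta\rangle$ has a density $f_\theta$ satisfying $f_\theta(t)\geq\big(1-n^{-\kappa}\big)\gamma(t)$ for all $|t|\leq n^\kappa$. This is the only place log-concavity and isotropicity of $X$ enter; from here on the argument is purely about the Gaussian.

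Next I would fix $\theta\in\Theta$ and $\alpha$ with $\alpha^2<\kappa/\delta$. Since $N\leq n^\delta$ we have $\alpha\sqrt{\log N}<\sqrt{\kappa\log n}$, which for $n$ larger than an absolute constant is far below $n^\kappa$; in particular, assuming first that $\alpha\sqrt{\log N}\geq 1$, the interval
$$
I:=\left[\alpha\sqrt{\log N},\ \alpha\sqrt{\log N}+\frac{1}{\alpha\sqrt{\log N}}\right]
$$
is contained in $(0,n^\kappa)$ and has length $|I|=(\alpha\sqrt{\log N})^{-1}\leq 1$. Integrating the density lower bound over $I$,
$$
\Pro\!\left(|\langle X,\theta\rangle|\geq\alpha\sqrt{\log N}\right)\geq\int_I f_\theta(t)\,dt\geq\big(1-n^{-\kappa}\big)\,|I|\,\min_{t\in I}\gamma(t).
$$
For $t\in I$ one has $t^2\leq\alpha^2\log N+2+(\alpha^2\log N)^{-1}\leq\alpha^2\log N+3$, hence $\min_{t\in I}\gamma(t)\geq c\,N^{-\alpha^2/2}$; combined with $1-n^{-\kappa}\geq\tfrac12$ and with the fact that $\alpha<\sqrt\kappa$ is bounded by an absolute constant (note $\delta\geq1$, so $1/\alpha>1/\sqrt\kappa$), this gives $\Pro(|\langle X,\theta\rangle|\geq\alpha\sqrt{\log N})\geq c\,N^{-\alpha^2/2}/\sqrt{\log N}$, which is the claimed estimate.

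In the remaining range $\alpha\sqrt{\log N}<1$ I would instead integrate the same pointwise lower bound over the fixed interval $[1,2]\subseteq(0,n^\kappa)$, obtaining $\Pro(|\langle X,\theta\rangle|\geq\alpha\sqrt{\log N})\geq\Pro(\langle X,\theta\rangle\in[1,2])\geq c_0$ for an absolute constant $c_0>0$; since $N^{-\alpha^2/2}\leq1$ and $\log N$ is bounded below, the target $c\,N^{-\alpha^2/2}/\sqrt{\log N}$ is $\leq c_0$ after shrinking $c$, so the inequality persists. I do not expect a genuine obstacle here. The only points requiring attention are (i) checking that $\alpha\sqrt{\log N}$ stays well inside the window $[-n^\kappa,n^\kappa]$ where Klartag's pointwise estimate is valid — which is precisely where the hypotheses $\alpha^2<\kappa/\delta$ and $N\leq n^\delta$ are used — and (ii) verifying that the factor $\alpha^{-1}$ coming from $|I|$ is harmless, which follows from $\alpha$ being bounded above; the small values of $n$ are handled as usual by adjusting the constants.
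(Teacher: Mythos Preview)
Your proposal is correct and follows essentially the same approach as the paper: invoke Klartag's central limit theorem to obtain the good set $\Theta$ and the pointwise comparison $f_\theta\approx\gamma$ on $|t|\leq n^\kappa$, then read off a Gaussian tail of order $N^{-\alpha^2/2}/\sqrt{\log N}$, with the hypothesis $\alpha^2<\kappa/\delta$ used precisely to ensure $\alpha\sqrt{\log N}$ stays inside the CLT window. The only difference is cosmetic: the paper bounds $\Pro(|\langle X,\theta\rangle|<t)$ from above via $f_\theta\leq(1+n^{-\kappa})\gamma$ on $[-t,t]$, applies the Mills ratio $\int_t^\infty\gamma\geq\gamma(t)/(2t)$, and then argues that the resulting additive error $C'/n^\kappa\leq C'/N^{\kappa/\delta}$ is swallowed by the main term; you instead integrate the lower bound $f_\theta\geq(1-n^{-\kappa})\gamma$ over a short interval of length $(\alpha\sqrt{\log N})^{-1}$, which sidesteps the separate error subtraction but yields the same estimate.
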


\begin{proof}
By Theorem \ref{THM_CLT} there exist a constant $\kappa$ and a set $\Theta\subseteq S^{n-1}$ with
measure greater than $1-Ce^{-\sqrt n}$ such that for any $\theta\in
\Theta$ and any $0\leq t\leq cn^{\kappa}$ we have, using the well known estimate for every $t\geq 1$
$$
\frac{\gamma(t)}{2t}\leq\int_t^\infty\gamma(s) \, ds\leq\frac{2\gamma(t)}{t},
$$
\begin{eqnarray*}
\mathbb P \left(|\langle X,\theta\rangle|<t\right) & \leq & \left(1+\frac{C^\prime}{n^\kappa}\right)\left(1-2\int_t^\infty\gamma(s) \, ds\right) \\
& \leq & \left(1+\frac{C^\prime}{n^{\kappa}}\right)\left(1-\frac{\gamma(t)}{t}\right),
\end{eqnarray*}
and so
$$
\mathbb P \left( |\langle X,\theta\rangle|\geq t\right)\geq 1-\left(1+\frac{C^\prime}{n^{\kappa}}\right)\left(1-\frac{\gamma(t)}{t}\right).
$$
Taking $t=\alpha\sqrt{\log N}$ we have that $t\leq cn^{\kappa}$,
since $N\leq n^\delta$. Thus
\begin{eqnarray*}
\mathbb P \left( |\langle X,\theta\rangle| \geq \alpha\sqrt{\log N}\right)
&\geq& \frac{e^{-\frac{\alpha^2}{2}\log
    N}}{\sqrt{2\pi}\alpha\sqrt{\log
    N}}-\frac{C^\prime}{n^{\kappa}}\left(1-\frac{e^{-\frac{\alpha^2}{2}\log
      N}}{\sqrt{2\pi}\alpha\sqrt{\log N}}\right)\cr
&\geq &\frac{1}{\sqrt{2\pi}\alpha N^{\frac{\alpha^2}{2}}\sqrt{\log
    N}}-\frac{C^\prime}{n^{\kappa}}\cr
&\geq &\frac{1}{\sqrt{2\pi}\alpha N^{\frac{\alpha^2}{2}}\sqrt{\log
    N}}-\frac{C^\prime}{N^{\frac{\kappa}{\delta}}}\cr
&\geq&\frac{1}{\sqrt{2\pi}\alpha N^{\frac{\alpha^2}{2}}\sqrt{\log
    N}}\left(1-\frac{C^{\prime\prime}\alpha\sqrt{\log N}}{N^{\frac{\kappa}{\delta}-\frac{\alpha^2}{2}}}\right)\cr
    &\geq&\frac{C}{\sqrt{2\pi}\alpha N^{\frac{\alpha^2}{2}}\sqrt{\log
    N}},
\end{eqnarray*}
whenever $N\geq N_0$ if we take $\frac{\alpha^2}{2}<\frac{\kappa}{\delta}$.
\end{proof}
In the following lemmas we will compute the expected value of $f(y)$, when $y$ is distributed according to the previously mentioned distributions. Since the techniques we use to compute it are quite different when the number of vertices is big and when the number of vertices is small, we divide both cases into separate lemmas.

\begin{lemma}\label{LemmaExpectationGaussianFewVertices}
Let $X_1,\dots, X_N$ ($n\leq N\leq n^\delta$) be independent copies of an isotropic log-concave random vector in $\R^n$, and let $f:\R^N\to\R$ be the function $f(y)=\E_{X_1,\dots,X_N}w(K_{N,y})$. There exists a constant depending on $\delta$, $c_1(\delta)$ and an absolute constant $c_2$ such that if $G=(g_1,\dots,g_N)$ is a standard Gaussian random vector in $\R^N$ then
$$
c_1(\delta)\log N\leq\E f(G)\leq c_2\log N.
$$
\end{lemma}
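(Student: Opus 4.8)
The plan is to start from the identity
$\E f(G)=\int_{S^{n-1}}\E_G\E_{X_1,\dots,X_N}\bigl(\max_{1\le i\le N}|g_i\langle X_i,\theta\rangle|\bigr)\,d\sigma(\theta)$,
which is immediate from the definitions of $w$, $h_K$ and $K_{N,y}$ together with Fubini (everything is nonnegative), and to estimate the inner expectation for (almost) every fixed $\theta$, treating the two inequalities by different means.

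For the upper bound I would use the crude pointwise estimate $\max_{1\le i\le N}|g_i\langle X_i,\theta\rangle|\le\bigl(\max_{1\le i\le N}|g_i|\bigr)\bigl(\max_{1\le i\le N}|\langle X_i,\theta\rangle|\bigr)$ together with the independence of $G$ from $X_1,\dots,X_N$, so that $\E_G\E_X\max_i|g_i\langle X_i,\theta\rangle|\le(\E_G\max_i|g_i|)\,(\E_X\max_i|\langle X_i,\theta\rangle|)\le C\sqrt{\log N}\,\E_X\max_i|\langle X_i,\theta\rangle|$. Integrating over $S^{n-1}$ and using $\int_{S^{n-1}}\E_X\max_i|\langle X_i,\theta\rangle|\,d\sigma(\theta)=\E_Xw(K_N)\le C\sqrt{\log N}$ — the bound already obtained in the proof of Lemma \ref{LemmaLipsichtzConstant} — gives $\E f(G)\le c_2\log N$ with $c_2$ absolute.

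The lower bound is the heart of the matter, and it is here that the restriction $N\le n^\delta$ (hence Lemma \ref{ProbEstimateCLT}) is used. I would discard everything outside the good set $\Theta\subseteq S^{n-1}$ of Lemma \ref{ProbEstimateCLT}, which has $\sigma(\Theta)\ge1-Ce^{-\sqrt n}\ge\tfrac12$. Fix $\theta\in\Theta$, put $\alpha=1$ and choose $\alpha'=\alpha'(\delta)$ with $(\alpha')^2=\tfrac12\min\{1,\kappa/\delta\}$, so that $\alpha^2+(\alpha')^2\le\tfrac32<2$ and $(\alpha')^2<\kappa/\delta$. Consider the events $A_i=\{|g_i|\ge\alpha\sqrt{\log N}\}\cap\{|\langle X_i,\theta\rangle|\ge\alpha'\sqrt{\log N}\}$, $1\le i\le N$. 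Since the pairs $(g_i,X_i)$ are mutually independent and $g_i$ is independent of $X_i$, the $A_i$ are independent and, by the standard lower bound for the Gaussian tail and by Lemma \ref{ProbEstimateCLT}, $\Pro(A_i)=p\ge\dfrac{c}{N^{(\alpha^2+(\alpha')^2)/2}\log N}\ge\dfrac{c}{N^{3/4}\log N}$ for $N$ above an absolute constant. Hence $\Pro\bigl(\bigcup_{i=1}^NA_i\bigr)=1-(1-p)^N\ge1-e^{-pN}\ge1-e^{-cN^{1/4}/\log N}\ge\tfrac34$ for $N$ large. On $\bigcup_iA_i$ one has $\max_{1\le i\le N}|g_i\langle X_i,\theta\rangle|\ge\alpha\alpha'\log N$, whence $\E_G\E_X\max_i|g_i\langle X_i,\theta\rangle|\ge\tfrac34\alpha'(\delta)\log N$ for every $\theta\in\Theta$; integrating over $\Theta$ yields $\E f(G)\ge\tfrac38\alpha'(\delta)\log N$. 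The finitely many remaining (small) values of $n$, for which $N\le n^\delta$ is bounded in terms of $\delta$, are disposed of by the trivial estimate $\E f(G)\ge\E|g_1|\int_{S^{n-1}}\E|\langle X_1,\theta\rangle|\,d\sigma(\theta)\ge c>0$.

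The main obstacle is precisely this lower bound: one must locate a single index $i$ at which $|g_i|$ and $|\langle X_i,\theta\rangle|$ are \emph{simultaneously} of order $\sqrt{\log N}$, and the constraint $(\alpha')^2<\kappa/\delta$ — dictated by the range $|t|\le n^\kappa$ on which Klartag's central limit theorem (through Lemma \ref{ProbEstimateCLT}) is available — is exactly what prevents choosing $\alpha=\alpha'=1$ and hence forces the lower constant $c_1$ to depend on $\delta$. The Gaussian tail estimates, the independence of the $A_i$, and the treatment of small $n$ are all routine.
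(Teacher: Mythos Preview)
Your proof is correct. The upper bound is argued exactly as in the paper. For the lower bound, however, you take a genuinely different and more elementary route.

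The paper's argument goes through the Orlicz--norm machinery of Theorem~\ref{THM Schuett Werner Litvak Gordon}: for fixed $\theta$ one writes
\[
\E_G\E_{X_1,\dots,X_N}\max_{1\le i\le N}|g_i\langle X_i,\theta\rangle|\sim\Vert(1,\dots,1)\Vert_{N_\theta},
\]
where $N_\theta$ is the Orlicz function associated to the product variable $g_1\langle X_1,\theta\rangle$, and then bounds $N_\theta(1/(\alpha^2\log N))$ from below by a chain of integral estimates culminating in an application of Lemma~\ref{ProbEstimateCLT}. Your argument bypasses this entirely: you observe that the events $A_i=\{|g_i|\ge\sqrt{\log N}\}\cap\{|\langle X_i,\theta\rangle|\ge\alpha'\sqrt{\log N}\}$ are independent, compute $\Pro(A_i)\ge cN^{-3/4}/\log N$ directly from the Gaussian tail and Lemma~\ref{ProbEstimateCLT}, and conclude that $\bigcup_iA_i$ has probability close to $1$, which immediately gives a pointwise lower bound on the maximum. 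Both proofs use Lemma~\ref{ProbEstimateCLT} in the same essential way and both are forced into a $\delta$--dependent constant through the constraint $(\alpha')^2<\kappa/\delta$. Your approach is shorter and avoids invoking Theorem~\ref{THM Schuett Werner Litvak Gordon}; the paper's approach, on the other hand, sets up a template (estimating $N_\theta(1/s)$) that is reused verbatim in the $B_p^N$ case (Lemma~\ref{LemmaExpectationB_p}), where the distribution of the perturbing coordinates changes but the structure of the computation does not.
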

\begin{proof}
For any $\theta\in S^{N-1}$ we have that
\begin{eqnarray*}
\E_G\E_{X_1,\dots,X_N}h_{K_{N,G}}(\theta)&=&\E_G\E_{X_1,\dots, X_N}\max_{1\leq i\leq N}|\langle g_iX_i,\theta\rangle|\cr
&\leq&\E_G\max_{1\leq i\leq N}|g_i|\E_{X_1,\dots, X_N}\max_{1\leq i\leq N}|\langle X_i,\theta\rangle|\cr
&\leq&\sqrt{\log N}\E_{X_1,\dots,X_N} h_{K_N}(\theta).
\end{eqnarray*}
Integrating in $\theta$ we obtain the upper bound
$$
\E f(G)\leq c_2\log N.
$$
On the other hand, for any $\theta\in S^{n-1}$ we have
\begin{eqnarray*}
\E_G\E_{X_1,\dots X_N}h_{K_{N,G}}(\theta)&=&\E_G\E_{X_1,\dots X_N}\max_{1\leq i\leq N}|g_i||\langle X_i,\theta\rangle|\sim\Vert (1,\dots,1)\Vert_{N_\theta}\cr
&=&\inf\left\{s>0\,:\,N_\theta\left(\frac{1}{s}\right)\leq\frac{1}{N}\right\}
\end{eqnarray*}
where $N_\theta$ is the Orlicz function given by
$$
N_\theta\left(s\right)=2\int_0^s\int_{-\infty}^\infty\int_{\frac{1}{|a|t}}^\infty |a|f_{\theta}(a)b\frac{e^{-\frac{b^2}{2}}}{\sqrt{2\pi}}\,db\,da\,dt.
$$
Thus
\begin{eqnarray*}
N_\theta\left(\frac1s\right)&=&2\int_0^{\frac1s}\int_{-\infty}^\infty\int_{\frac{1}{|a|t}}^\infty |a|f_{\theta}(a)b\frac{e^{-\frac{b^2}{2}}}{\sqrt{2\pi}}\,db\,da\,dt\cr
&=&\frac{2}{\sqrt{2\pi}}\int_0^{\frac1s}\int_{-\infty}^{\infty} |a|f_\theta(a)e^{-\frac{1}{2a^2t^2}}\,da\,dt\cr
&\geq&\frac{2}{\sqrt{2\pi}}\int_{\frac{1}{2s}}^{\frac1s}\int_{-\infty}^{\infty} |a|f_\theta(a)e^{-\frac{1}{2a^2t^2}}\,da\,dt\cr
&\geq&\frac{2}{\sqrt{2\pi}}\int_{\frac{1}{2s}}^{\frac1s}\int_{-\infty}^{\infty} |a|f_\theta(a)e^{-\frac{2s^2}{a^2}}\,da\,dt\cr
&=&\frac{1}{\sqrt{2\pi}s}\int_{-\infty}^{\infty} |a|f_\theta(a)e^{-\frac{2s^2}{a^2}}\,da\cr
&\geq&\frac{1}{\sqrt{2\pi}s}\int_{\{|a|\geq\sqrt s\}} |a|f_\theta(a)e^{-\frac{2s^2}{a^2}}\,da\cr
&\geq&\frac{1}{\sqrt{2\pi s}}\int_{\{|a|\geq\sqrt s\}}f_\theta(a)e^{-2s}\,da\cr
&=&\frac{e^{-2s}}{\sqrt{2\pi s}} \Pro\left( |\langle X,\theta\rangle|\geq \sqrt{s}\right) .\cr
\end{eqnarray*}
Taking $s=\alpha^2\log N$ we have that for every $\theta\in S^{n-1}$
$$
N_\theta\left(\frac{1}{\alpha^2\log N}\right)\geq\frac{1}{N^{2\alpha^2}\sqrt{2\pi \alpha^2\log N}} \Pro\left(|\langle X,\theta\rangle|\geq \alpha\sqrt{\log N}\right).
$$
By Lemma \ref{ProbEstimateCLT} there exists a constant $\kappa$ such that if $\alpha^2<\frac{\kappa}{\delta}$, then for a set of directions $\Theta$ with $\sigma(\Theta)\geq 1-Ce^{-\sqrt n}$ we have that
$$
\Pro\left(|\langle X,\theta\rangle|\geq \alpha\sqrt{\log N}\right) \geq \frac{c}{N^{\frac{\alpha^2}{2}}\sqrt{\log N}},
$$
and so for this set of directions
$$
N_\theta\left(\frac{1}{\alpha^2\log N}\right)\geq\frac{c}{N^{\frac{5}{2}\alpha^2} \alpha\log N}>\frac{1}{N},
$$
if $\alpha$ is chosen small enough. Consequently, for this set of directions
$$
\E_G\E_{X_1,\dots X_N}h_{K_{N,G}}(\theta)\geq c(\delta)\log N 
$$
and, by Markov's inequality,
$$
\E f(G)=\E_G\E_{X_1,\dots X_N}w(K_{N,G})\geq c(\delta)\log N (1-Ce^{-\sqrt n})\geq c_1(\delta)\log N .
$$
\end{proof}

When the number of vertices is bigger, following the proof in \cite{DGT} where the authors used the ideas in \cite{LPRTJ} to prove a similar result when no perturbations are involved, we have the following stronger result:

\begin{lemma}\label{ExpectationGaussianManyPoints}
Let $X_1,\dots, X_N$ ($N>n^2$) be independent copies of an isotropic log-concave random vector in $\R^n$. There exist an absolute constant $c$ such that if $G=(g_1,\dots,g_N)$ is a standard Gaussian random vector in $\R^N$ then
$$
\Pro_{G,X_1,\dots,X_N}\left(K_{N,G}\supseteq c\sqrt{\log N}Z_{\log N}(X)\right)\to_{n\to\infty}1.
$$
\end{lemma}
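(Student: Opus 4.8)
The plan is to reduce the containment to a uniform directional inequality and then import the unperturbed estimate of \cite{DGT} on a suitable sub-sample. By definition of the support function, $K_{N,G}\supseteq c\sqrt{\log N}\,Z_{\log N}(X)$ is equivalent to
$$
h_{K_{N,G}}(\theta)=\max_{1\le i\le N}|g_i|\,|\langle X_i,\theta\rangle|\ \ge\ c\sqrt{\log N}\,h_{Z_{\log N}(X)}(\theta)\qquad\text{for every }\theta\in S^{n-1},
$$
where $h_{Z_{\log N}(X)}(\theta)=(\E|\langle X,\theta\rangle|^{\log N})^{1/\log N}$. The extra factor $\sqrt{\log N}$, which does not appear in the unperturbed statement, will be produced by discarding all coordinates except those $i$ for which the Gaussian weight $|g_i|$ is already of the maximal order $\sqrt{\log N}$; the uniformity over $\theta$ will then be inherited from \cite{DGT} (which in turn rests on the net/iteration technique of \cite{LPRTJ}) applied to the resulting sub-sample.

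First I would fix a small absolute constant $t_0\in(0,1)$ and set $I=I(G):=\{1\le i\le N:|g_i|\ge t_0\sqrt{\log N}\}$; this set depends only on $G$. Since $\Pro(|g_i|\ge t_0\sqrt{\log N})\ge \frac{c}{\sqrt{\log N}}\,N^{-t_0^2/2}$, the binomial variable $|I|$ has $\E|I|\ge \frac{c}{\sqrt{\log N}}N^{1-t_0^2/2}$, so a Chernoff bound gives $\Pro_G(|I|\ge N')\ge 1-e^{-c'N'}$ with $N':=\frac12\E|I|$. Because $N>n^2$ and $t_0<1$ one checks that $N'\to\infty$, that $N'\ge c_0 n$, and that $\log(N'/n)\ge c_1\log N$, all for $n$ large enough.

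The hard part is the next step, and it is really a matter of coupling rather than of new analysis. Since $G$ is independent of $X_1,\dots,X_N$, conditionally on $G$ the family $\{X_i:i\in I\}$ consists of $|I|$ independent copies of $X$. Applying the unperturbed estimate of \cite{DGT} to this sub-sample: on an event of conditional probability at least $1-\epsilon_n$, with $\epsilon_n\to0$ uniformly over sub-samples of size at least $c_0 n$, one has $\textrm{conv}\{\pm X_i:i\in I\}\supseteq c_2 Z_{\log(|I|/n)}(X)$, i.e. $\max_{i\in I}|\langle X_i,\theta\rangle|\ge c_2\,h_{Z_{\log(|I|/n)}(X)}(\theta)$ for all $\theta$. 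Using $\log(|I|/n)\ge\log(N'/n)\ge c_1\log N$ together with the log-concave moment comparison $\|\cdot\|_q\le C\frac{q}{p}\|\cdot\|_p$ for $q\ge p$ (applied to the log-concave marginal $\langle X,\theta\rangle$), we get $h_{Z_{\log(|I|/n)}(X)}(\theta)\ge c_3\,h_{Z_{\log N}(X)}(\theta)$. Hence, on this event, for every $\theta\in S^{n-1}$,
$$
h_{K_{N,G}}(\theta)\ \ge\ \max_{i\in I}|g_i|\,|\langle X_i,\theta\rangle|\ \ge\ t_0\sqrt{\log N}\ \max_{i\in I}|\langle X_i,\theta\rangle|\ \ge\ t_0c_2c_3\,\sqrt{\log N}\,h_{Z_{\log N}(X)}(\theta).
$$

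Finally I would assemble the two probabilistic estimates: integrating over $G$,
$$
\Pro_{G,X_1,\dots,X_N}\big(K_{N,G}\not\supseteq c\sqrt{\log N}\,Z_{\log N}(X)\big)\ \le\ \Pro_G(|I|<N')+\epsilon_n,
$$
with $c=t_0c_2c_3$, and both terms on the right tend to $0$ as $n\to\infty$. The two points requiring care are exactly those flagged above: one must select $I$ as a function of $G$ alone so that conditioning preserves the i.i.d.\ structure and \cite{DGT} can be quoted verbatim, and one must check that $N>n^2$ is comfortably strong enough to force $|I|\ge c_0 n$ and $\log(|I|/n)\sim\log N$, so that the sub-sampled centroid body $Z_{\log(|I|/n)}(X)$ agrees, up to an absolute constant, with the full $Z_{\log N}(X)$. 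The genuinely deep ingredient — the uniformity of the lower bound over all $\theta\in S^{n-1}$, obtained via the successive-approximation argument of \cite{LPRTJ} — is not reproved here but used as a black box from \cite{DGT}.
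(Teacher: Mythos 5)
Your argument is correct and takes a genuinely different route from the paper's. The paper re-runs the Litvak--Pajor--Rudelson--Tomczak-Jaegermann iteration from scratch, folding the Gaussian weights into the random operator $\Gamma(y)=(g_i\langle X_i,y\rangle)_i$ and then controlling $\|\Gamma\|$ (via Mendelson--Pajor plus a union bound over $\max_i|g_i|$), the small-ball behavior of $\max_{j\in\sigma_i}|g_jX_j|$ (via Paley--Zygmund and Borell), and a $\delta$-net for $\|\Gamma(\cdot)\|_0$. You instead extract the $\sqrt{\log N}$ factor at the very start by thresholding the Gaussian coordinates, then condition on $G$ and invoke the \emph{unperturbed} inclusion on the i.i.d. sub-sample $\{X_i:i\in I\}$, finishing with the log-concave moment comparison to trade $Z_{\log(|I|/n)}$ for $Z_{\log N}$. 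This is more modular and shorter, at the cost of a worse absolute constant and of using the unperturbed result as a black box where the paper re-proves it.

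Two points to tighten. First, you claim the unperturbed estimate holds ``uniformly over sub-samples of size at least $c_0 n$''; in fact the argument of \cite{DGT} as actually used (via the Mendelson--Pajor singular-value bound) needs the number of points to exceed $c_1 n\log^2 n$, not merely $c_0 n$. This is harmless here since your Chernoff bound gives $|I|\gtrsim N^{1-t_0^2/2}/\sqrt{\log N}\gtrsim n^{2-t_0^2}/\sqrt{\log n}$, which for any fixed $t_0<1$ dwarfs $n\log^2 n$ once $n$ is large --- but you should record that stronger bound rather than $|I|\ge c_0 n$, since the latter alone would not suffice. Second, the cited result in \cite{DGT} is stated for points uniform in an isotropic convex body; the present lemma is for a general isotropic log-concave $X$, so you must at least remark (as the paper does elsewhere) that the same proof goes through in the log-concave case. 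Neither issue affects the validity of your approach, but both should be made explicit.
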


\begin{proof}
Let $\Gamma:\ell_2^n\to\ell_2^N$ be the random operator
$$
\Gamma(y)=(g_1\langle X_1,y\rangle,\dots,g_N\langle X_N,y\rangle)
$$
and for every $\gamma>0$, let $\Omega_\gamma$ be the event
$$
\Omega_\gamma=\left\{\Vert\Gamma\Vert\leq\gamma\sqrt N\sqrt{\log N}\right\}.
$$
We have that
\begin{eqnarray*}
\Pro_{G,X_1,\dots,X_N}(\Omega_\gamma^c)&=&\Pro_{G,X_1,\dots,X_N}\left(\max_{\theta\in S^{n-1}}\frac{1}{N}\sum_{i=1}^Ng_i^2\langle X_i,\theta\rangle^2>\gamma^2 \log N\right)\cr
&\leq&\Pro_{G,X_1,\dots,X_N}\left(\max_{1\leq i\leq N}g_i^2\max_{\theta\in S^{n-1}}\frac{1}{N}\sum_{i=1}^N\langle X_i,\theta\rangle^2>\gamma^2 \log N\right)\cr
&\leq&\Pro_G(\max_{1\leq i\leq N}|g_i|>\alpha\sqrt{\log N})+\Pro_{X_1,\dots,X_N}\left(\max_{\theta\in S^{n-1}}\frac{1}{N}\sum_{i=1}^N\langle X_i,\theta\rangle^2>\frac{\gamma^2}{\alpha^2} \right)\cr
&\leq&N\Pro_{g_1}(|g_1|>\alpha\sqrt{\log N})+\Pro_{X_1,\dots,X_N}\left(\max_{\theta\in S^{n-1}}\frac{1}{N}\sum_{i=1}^N\langle X_i,\theta\rangle^2>\frac{\gamma^2}{\alpha^2} \right)\cr
&\leq&\frac{2N}{\sqrt{2\pi}N^{\frac{\alpha^2}{2}}}+\Pro_{X_1,\dots,X_N}\left(\max_{\theta\in S^{n-1}}\frac{1}{N}\sum_{i=1}^N\langle X_i,\theta\rangle^2>\frac{\gamma^2}{\alpha^2} \right).\cr
\end{eqnarray*}

An application of the main Theorem in \cite{MP} gives that if $N\geq c_1n\log^2n$ (which happens for $n$ big enough since we are assuming $N\geq n^2$) then if $\frac{\gamma}{\alpha}$ is a constant big enough
$$
\Pro\left(\max_{\theta\in S^{n-1}}\frac{1}{N}\sum_{i=1}^N\langle X_i,\theta\rangle^2>\frac{\gamma^2}{\alpha^2} \right)\leq e^{-c_2\frac{\gamma}{\alpha}\left(\frac{N}{(\log N)(n\log n)}\right)^\frac14}.
$$
Consequently, if we take $\alpha$ a constant big enough and $\gamma$ a constant big enough we have that
$$
\Pro(\Omega^c)\leq\varepsilon_n
$$
with $\varepsilon_n$ tending to 0 as $n$ goes to $\infty$.

On the other hand, for any $\sigma\subseteq\{1,\dots, N\}$ and any $\theta\in S^{n-1}$, by Paley-Zygmund inequality
\begin{eqnarray*}
&&\Pro_{G,X_1,\dots,X_N}\left(\max_{i\in\sigma}|\langle g_iX_i,\theta\rangle|\leq\frac{1}{2}(\E |g_1|)^\frac{1}{q}(\E|\langle X_1,\theta\rangle|^q)^\frac{1}{q}\right)\cr
&=&\prod_{i\in\sigma}\Pro_{G,X_1,\dots,X_N}\left(|\langle g_iX_i,\theta\rangle|\leq\frac{1}{2}(\E |g_1|)^\frac{1}{q}(\E|\langle X_1,\theta\rangle|^q)^\frac{1}{q}\right)\cr
&\leq&\left(1-\left(1-\left(\frac12\right)^q\right)^2\frac{\left(\E|g_1|^q\E|\langle X_1,\theta\rangle|^q\right)^2}{\E|g_1|^{2q}\E|\langle X_1,\theta\rangle|^{2q}}\right)^{|\sigma|}.
\end{eqnarray*}
Since, by Borell's lemma (see \cite{MS}, Appendix III) there exist absolute constants $C_1,C_2$ such that
$$
\E|g_1|^{2q}\leq C_1^q\E|g_1|^q\hspace{1cm}\E|\langle X_1,\theta\rangle|^{2q}\leq C_2^q\E|\langle X_1,\theta\rangle|^q
$$
the quantity above is bounded by
$$
\left(1-\frac{1}{4C^q}\right)^{|\sigma|}\leq e^{-\frac{|\sigma|}{4C^q}}.
$$

Take $\beta\in(0,\frac{1}{2}]$, $m=\left[8\left(\frac{N}{n}\right)^{2\beta}\right]$ and $k=\left[\frac{N}{m}\right]$. Let $\sigma_1,\dots,\sigma_k$ be a partition of $\{1,\dots, N\}$ with $m\leq|\sigma_i$ for every $i$ and $\Vert\cdot\Vert_0$ be the norm
$$
\Vert u\Vert_0=\frac{1}{k}\sum_{i=1}^k
\max_{j\in\sigma_i}|u_j|.$$
Since for all $1\leq i\leq k$ and every $z\in\R^n$
$$
h_{K_{N,G}}(z)=\max_{1\leq j\leq N}|\langle g_jX_j,z\rangle|\geq\max_{j\in\sigma_i}|\langle g_jX_j,z\rangle|,
$$
then for every $z\in\R^n$
$$
h_{K_{N,G}}(z)\geq\Vert\Gamma(z)\Vert_0.
$$

By Markov's inequality, if $z\in\R^n$ verifies $\Vert\Gamma(z)\Vert_0\leq\frac{1}{4}\left(\E|g_1|^q\right)^\frac{1}{q}\left(\E|\langle X_1,z\rangle|^q\right)^\frac{1}{q}$ then there exists a set $I\subseteq\{1,\dots,k\}$ with $|I|\geq\frac{k}{2}$ such that
$$
\max_{j\in\sigma_i}|\langle g_jX_j,z\rangle|\leq\frac{1}{2}\left(\E|g_1|^q\right)^\frac{1}{q}\left(\E|\langle X_1,z\rangle|^q\right)^\frac{1}{q}
$$
for every $i\in I$. Thus, for every $z\in\R^n$
\begin{eqnarray*}
&&\Pro_{G,X_1,\dots,X_N}\left(\Vert\Gamma(z)\Vert_0\leq\frac{1}{4}\left(\E|g_1|^q\right)^\frac{1}{q}\left(\E|\langle X_1,z\rangle|^q\right)^\frac{1}{q}\right)\cr
&\leq&\sum_{|I|=\lceil\frac{k}{2}\rceil}\Pro_{G,X_1,\dots,X_N}\left(\max_{j\in\sigma_i}|\langle g_jX_j,z\rangle|\leq\frac{1}{2}\left(\E|g_1|^q\right)^\frac{1}{q}\left(\E|\langle X_1,z\rangle|^q\right)^\frac{1}{q}\,\forall i\in I\right)\cr
&\leq&\sum_{|I|=\lceil\frac{k}{2}\rceil}\prod_{i\in I}\Pro_{G,X_1,\dots,X_N}\left(\max_{j\in\sigma_i}|\langle g_jX_j,z\rangle|\leq\frac{1}{2}\left(\E|g_1|^q\right)^\frac{1}{q}\left(\E|\langle X_1,z\rangle|^q\right)^\frac{1}{q}\right)\cr
&\leq&\sum_{|I|=\lceil\frac{k}{2}\rceil}\prod_{i\in I}e^{-\frac{|\sigma_i|}{4C^q}}\leq\left(\begin{array}{c}k\cr\lceil\frac{k}{2}\rceil\end{array}\right)e^{-\frac{ckm}{C^q}}\leq e^{c'k-\frac{ckm}{C^q}}\leq e^{-cN^{1-\beta}n^\beta}.
\end{eqnarray*}
if we take $q\sim\beta\log\frac{N}{n}$.

Now, let $S=\{z\in\R^n\,:\,\frac{1}{2}\left(\E|g_1|^q\right)^\frac{1}{q}\left(\E|\langle X_1,z\rangle|^q\right)^\frac{1}{q}=1\}$ and let $U$ be a $\delta$-net with cardinality $|U|\leq\left(\frac{3}{\delta}\right)^n$.i.e., for every $z\in S$ there is $u\in U$ such that $\frac{1}{2}\left(\E|g_1|^q\right)^\frac{1}{q}\left(\E|\langle X_1,z-u\rangle|^q\right)^\frac{1}{q}\leq \delta$. Then
$$
\Pro\left(\exists u\in U\,:\,\Vert\Gamma(u)\Vert_0\leq\frac{1}{2}\right)\leq e^{n\log\frac{3}{\delta}-cN^{1-\beta}n^\beta}.
$$
Now, if $\Gamma\in\Omega_\gamma$, since $\left(\E|\langle X_1,z\rangle|^q\right)^\frac{1}{q}\geq|z|$ for all $z\in\R^n$, we have
\begin{eqnarray*}
\Vert\Gamma(z)\Vert_0\leq\frac{1}{\sqrt k}|\Gamma(z)|&\leq&\gamma\sqrt{\frac Nk}\sqrt{\log N}|z|\leq\gamma\sqrt{\frac Nk}\sqrt{\log N}\left(\E|\langle X_1,z\rangle|^q\right)^\frac{1}{q}\cr
&\leq& \frac{C}{\sqrt\beta}\gamma\sqrt{\frac Nk}(\E|g_1|^q)^\frac{1}{q}\left(\E|\langle X_1,z\rangle|^q\right)^\frac{1}{q},
\end{eqnarray*}
since $\sqrt{\log N}\sim\sqrt{\log\frac{N}{n}}\sim\frac{1}{\sqrt\beta}(\E|g_1|^q)^\frac{1}{q}$ because $N\geq n^2$. Thus, for every $z\in S$ there exists $u\in U$ such that $\frac{1}{2}\left(\E|g_1|^q\right)^\frac{1}{q}\left(\E|\langle X_1,z-u\rangle|^q\right)^\frac{1}{q}\leq \delta$ and so
$$
\Vert\Gamma(u)\Vert_0\leq\Vert\Gamma(z)\Vert_0+C\frac{\gamma}{\sqrt\beta}\sqrt{\frac Nk}\delta.
$$
Choosing $\delta=\frac{\sqrt {\beta k}}{4C\gamma\sqrt N}$ we have that
\begin{eqnarray*}
&&\Pro_{G,X_1,\dots,X_N}\left(\Gamma\in\Omega_\gamma\,:\,\exists z\in\R^n\,:\,\Vert\Gamma(z)\Vert_0\leq\frac{1}{8}\left(\E|g_1|^q\right)^\frac1q\left(\E|\langle X_1,z\rangle|^q\right)^\frac1q\right)\cr
&=&\Pro_{G,X_1,\dots,X_N}\left(\Gamma\in\Omega_\gamma\,:\,\exists z\in S\,:\,\Vert\Gamma(z)\Vert_0\leq\frac{1}{4}\right)\cr
&\leq&\Pro_{G,X_1,\dots,X_N}\left(\Gamma\in\Omega_\gamma\,:\,\exists u\in U\,:\,\Vert\Gamma(z)\Vert_0\leq\frac{1}{2}\right)\cr
&\leq&e^{n\log\frac{12C\gamma\sqrt N}{\sqrt {\beta k}}-cN^{1-\beta}n^\beta}\leq e^{-cN^{1-\beta}n^\beta}
\end{eqnarray*}
if $N\geq C(\beta) n$.

Consequently, choosing $\beta$ a constant  in $(0,\frac{1}{2}]$ with probability greater than $1-e^{-cN^{1-\beta}n^\beta}-\varepsilon_n$ we have that
$$
K_{N,G}\supseteq \frac{1}{8}(\E|g_1|^q)^\frac{1}{q}Z_q(X)\supseteq c\sqrt{\log N}Z_{\log N}(X)
$$
\end{proof}

\begin{cor}\label{CorExpectationGaussian}
Let $X_1,\dots, X_N$ ($n\leq N\leq e^{\sqrt n}$) be independent copies of an isotropic log-concave random vector in $\R^n$, and let $f:\R^N\to\R$ be the function $f(y)=\E_{X_1,\dots,X_N}w(K_{N,y})$. There exist absolute constants $c_1,c_2$ such that if $G=(g_1,\dots,g_N)$ is a standard Gaussian random vector in $\R^N$ then
$$
c_1\log N\leq\E f(G)\leq c_2\log N.
$$
\end{cor}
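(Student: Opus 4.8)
The plan is to obtain the corollary by stitching together the two preceding lemmas, which between them already cover the whole admissible range $n\le N\le e^{\sqrt n}$; the only care needed is to make sure the constants one ends up with do not secretly depend on $n$ or on an auxiliary exponent.

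\textbf{Upper bound.} This half requires no case distinction and is precisely the opening computation in the proof of Lemma \ref{LemmaExpectationGaussianFewVertices}. For every $\theta\in S^{n-1}$ one has
$$
\E_G\E_{X_1,\dots,X_N}h_{K_{N,G}}(\theta)\le\Bigl(\E_G\max_{1\le i\le N}|g_i|\Bigr)\,\E_{X_1,\dots,X_N}h_{K_N}(\theta)\le C\sqrt{\log N}\,\E_{X_1,\dots,X_N}h_{K_N}(\theta),
$$
using the elementary bound $\E_G\max_{1\le i\le N}|g_i|\le C\sqrt{\log N}$. Integrating in $\theta$ and invoking $\E_{X_1,\dots,X_N}w(K_N)\le C\sqrt{\log N}$, which holds for $1\le N\le e^{\sqrt n}$ (this is the estimate proved inside Lemma \ref{LemmaLipsichtzConstant}), yields $\E f(G)\le c_2\log N$.

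\textbf{Lower bound.} Here I split the range at $N=n^2$. For $n\le N\le n^2$ I apply Lemma \ref{LemmaExpectationGaussianFewVertices} with the \emph{fixed} value $\delta=2$; the constant $c_1(2)$ produced by that lemma is then just a number, so $\E f(G)\ge c_1(2)\log N$ with an absolute constant. For $n^2<N\le e^{\sqrt n}$ I appeal to Lemma \ref{ExpectationGaussianManyPoints}: on an event of $\Pro_{G,X_1,\dots,X_N}$-probability at least $1-\varepsilon_n$, with $\varepsilon_n\to 0$, one has the inclusion $K_{N,G}\supseteq c\sqrt{\log N}\,Z_{\log N}(X)$, whence $w(K_{N,G})\ge c\sqrt{\log N}\,w\bigl(Z_{\log N}(X)\bigr)$ on that event. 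Since $Z_{\log N}(X)$ depends only on the distribution of $X$ and since $1\le N\le e^{\sqrt n}$, we have $w\bigl(Z_{\log N}(X)\bigr)\sim\sqrt{\log N}$ (again the fact recalled in the proof of Lemma \ref{LemmaLipsichtzConstant}), so $w(K_{N,G})\ge c'\log N$ on the event. Taking expectations, using $w(K_{N,G})\ge 0$ and $\E f(G)=\E_G\E_{X_1,\dots,X_N}w(K_{N,G})$, we get $\E f(G)\ge c'\log N\,(1-\varepsilon_n)\ge c_1\log N$ once $n$ is large enough; the finitely many remaining small values of $n$ (for each of which $N$, hence $\E f(G)/\log N$, ranges over a finite set of positive numbers) are absorbed into $c_1,c_2$.

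\textbf{Where the difficulty sits.} The one thing that is not automatic is that Lemma \ref{LemmaExpectationGaussianFewVertices} on its own cannot deliver the corollary, because its lower constant $c_1(\delta)$ degrades as $\delta\to\infty$ and so is useless when $N$ is as large as $e^{\sqrt n}$. The point of the argument is therefore to confine that lemma to the polynomial window $N\le n^2$, where $\delta=2$ is a fixed choice, and to hand the rest of the range over to Lemma \ref{ExpectationGaussianManyPoints}, whose lower bound comes from a genuine body-inclusion $K_{N,G}\supseteq c\sqrt{\log N}\,Z_{\log N}(X)$ and is hence uniform in $n$.
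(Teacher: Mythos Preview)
Your proof is correct and follows essentially the same route as the paper: split at $N=n^2$, use Lemma~\ref{LemmaExpectationGaussianFewVertices} with the fixed choice $\delta=2$ for the polynomial range, and use the inclusion from Lemma~\ref{ExpectationGaussianManyPoints} together with $w(Z_{\log N}(X))\sim\sqrt{\log N}$ for the remaining range. Your write-up is in fact more careful than the paper's about the constants (making explicit that $c_1(2)$ is absolute) and about the finitely many small values of $n$, and your ``Where the difficulty sits'' paragraph correctly identifies why the split is needed.
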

\begin{proof}
If $n\leq N\leq n^2$ this is Lemma \ref{LemmaExpectationGaussianFewVertices}. If $n^2\leq N\leq e^{\sqrt n}$ by Lemma \ref{ExpectationGaussianManyPoints}
$$
\Pro_{G,X_1,\dots, X_N}(w(K_{N,G})\geq c\sqrt{\log N}w(Z_{\log N}(X)))
$$
tends to 1 as $n\to\infty$. Thus, by Markov's inequality
$$
\E f(G)\geq c'\sqrt{\log N}w(Z_{\log N}(X))\sim\log N.
$$
\end{proof}
By integration in polar coordinates, we obtain the following:

\begin{cor}\label{LemmaExpectationSphere}
Let $X_1,\dots, X_N$ ($n\leq N\leq e^{\sqrt n}$) be independent copies of an isotropic log-concave random vector in $\R^n$, and let $f:\R^N\to\R$ be the function $f(y)=\E_{X_1,\dots,X_N}w(K_{N,y})$. There exists  absolute constants $c_1,c_2$ such that, if $u$ is random vector uniformly distributed on $S^{N-1}$, then
$$
c_1\frac{\log N}{\sqrt N}\leq\int_{S^{N-1}} f(u)d\sigma(u)\leq c_2\frac{\log N}{\sqrt N}.
$$
\end{cor}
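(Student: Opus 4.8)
The key structural fact is that $f$ is (absolutely) homogeneous of degree one. Indeed, for every $\lambda\in\R$ we have $K_{N,\lambda y}=\mathrm{conv}\{\pm\lambda y_1X_1,\dots,\pm\lambda y_NX_N\}=|\lambda|\,K_{N,y}$ by symmetry of the convex hull, so $w(K_{N,\lambda y})=|\lambda|\,w(K_{N,y})$ and hence $f(\lambda y)=|\lambda|\,f(y)$. Moreover $f\geq 0$, $f(0)=0$, and by Lemma \ref{LemmaLipsichtzConstant} we have $0\leq f(y)\leq C\sqrt{\log N}\,|y|$, so all the integrals below are finite.

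The plan is then to pass from the Gaussian expectation, which we already control by Corollary \ref{CorExpectationGaussian}, to the spherical average by integration in polar coordinates. Writing a standard Gaussian vector $G$ in $\R^N$ as $G=|G|\,u$ with $u=G/|G|$, one uses the classical fact that $|G|$ and $u$ are independent and that $u$ is uniformly distributed on $S^{N-1}$. Combined with the homogeneity of $f$, this gives
$$
\E f(G)=\E\bigl[\,|G|\,f(u)\,\bigr]=\E|G|\cdot\int_{S^{N-1}}f(u)\,d\sigma(u).
$$

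It remains to recall the standard estimate $\E|G|\sim\sqrt{N}$ (in fact $\E|G|=\sqrt{2}\,\Gamma(\tfrac{N+1}{2})/\Gamma(\tfrac N2)$, which lies between $\sqrt{N-1}$ and $\sqrt N$). Dividing, and using $\E f(G)\sim\log N$ from Corollary \ref{CorExpectationGaussian}, we obtain
$$
\int_{S^{N-1}}f(u)\,d\sigma(u)=\frac{\E f(G)}{\E|G|}\sim\frac{\log N}{\sqrt N},
$$
which is the claimed two-sided bound with absolute constants $c_1,c_2$.

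There is no real obstacle here: the argument is just homogeneity plus the Gaussian polar decomposition, and the only external ingredient beyond Corollary \ref{CorExpectationGaussian} is the asymptotics of $\E|G|$. The one point requiring a line of care is justifying that $|G|$ and $u=G/|G|$ are independent and that $u$ is uniform on the sphere — this is the rotation invariance of the Gaussian measure, already invoked in the proof of Theorem \ref{THM_Gaussian_concentration} — together with the remark that $f$ is finite everywhere so that the factorization of the expectation is legitimate.
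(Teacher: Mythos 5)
Your proof is correct and is exactly the argument the paper has in mind when it says ``by integration in polar coordinates'': you pass from Corollary \ref{CorExpectationGaussian} to the spherical average using the polar decomposition $G=|G|u$, the independence of $|G|$ and $u$, the degree-one homogeneity of $f$, and $\E|G|\sim\sqrt N$. You have simply spelled out the details that the paper leaves implicit.
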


In order to compute $\E f(y)$ when $y$ is uniformly distributed in $B_p^N$ we need the following lemma, which was proved in \cite{SZ}:

\begin{lemma} [\cite{SZ}, Lemma 2] \label{LEM_expectation_max_schechtman_zinn}
Let $g_1,\ldots,g_N$ be independent identically distributed random variables with density $f_{g}(t) = \frac{1}{2\Gamma(1+\frac{1}{p})}e^{-|t|^p}$. For all $1\leq p \leq q<\infty$, if $N\geq 20p\Gamma\left(1+\frac{1}{p}\right)$, then $\mathbb E \left( \sum_{i=1}^N |g_i|^q\right)^{\frac{1}{q}}$ is equivalent, up to absolute constants, to $q^{\frac{1}{p}}N^{\frac{1}{q}}$, if $q\leq \log N$, and to $(\log N)^{\frac{1}{p}}$, otherwise.
\end{lemma}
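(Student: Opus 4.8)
\emph{Proof plan.} The plan is to reduce the statement to two elementary facts about a single coordinate and then apply Jensen's inequality for the upper bound and a Paley--Zygmund / extreme-value dichotomy for the lower bound. For the coordinate, the change of variables $u=t^{p}$ gives
\[
  \mathbb E|g_1|^{q}=\frac{1}{\Gamma(1+\tfrac1p)}\int_{0}^{\infty}t^{q}e^{-t^{p}}\,dt
  =\frac{\Gamma\!\bigl(\tfrac{q+1}{p}\bigr)}{p\,\Gamma(1+\tfrac1p)}=\frac{\Gamma\!\bigl(\tfrac{q+1}{p}\bigr)}{\Gamma(\tfrac1p)},
\]
and Stirling's formula, used uniformly over $p\ge1$ and $q\ge p$ (so that $\tfrac{q+1}{p}\ge1$ and $\tfrac pq\le1$), should yield $(\mathbb E|g_1|^{q})^{1/q}\sim q^{1/p}$ with absolute constants. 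Since $t\mapsto e^{-|t|^{p}}$ is log-concave for $p\ge1$, Borell's lemma (used already in the proof of Lemma~\ref{ExpectationGaussianManyPoints}) provides an absolute constant $C_0$ with $\mathbb E|g_1|^{2q}\le C_0^{q}(\mathbb E|g_1|^{q})^{2}$. These two facts are the only point at which the specific density enters.

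\textbf{Upper bound.} Since $q\ge1$ the map $x\mapsto x^{1/q}$ is concave, so Jensen gives
\[
  \mathbb E\Bigl(\sum_{i=1}^{N}|g_i|^{q}\Bigr)^{1/q}\le\bigl(N\,\mathbb E|g_1|^{q}\bigr)^{1/q}=N^{1/q}\bigl(\mathbb E|g_1|^{q}\bigr)^{1/q}\le c\,q^{1/p}N^{1/q},
\]
which is the claimed bound for $q\le\log N$. For $q>\log N$ I would first use monotonicity of $\ell_q$-norms, $(\sum|g_i|^{q})^{1/q}\le(\sum|g_i|^{q_0})^{1/q_0}$ with $q_0=\max\{p,\log N\}\le q$, and then apply the previous estimate at exponent $q_0$; the result is of order $N^{1/q_0}q_0^{1/p}$, which equals $e(\log N)^{1/p}$ when $\log N\ge p$ and is an absolute constant (hence $\le c(\log N)^{1/p}$, since $(\log N)^{1/p}\ge1$) when $\log N<p$.

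\textbf{Lower bound.} I would split on the size of $q$. If $q\le\frac{\log N}{\log C_0}$, apply Paley--Zygmund to $Y=\sum_{i=1}^{N}|g_i|^{q}$: since $\mathbb E Y=N\,\mathbb E|g_1|^{q}$ and, by independence, $\mathbb E Y^{2}\le N\,\mathbb E|g_1|^{2q}+N^{2}(\mathbb E|g_1|^{q})^{2}\le(C_0^{q}+N)N(\mathbb E|g_1|^{q})^{2}\le 2N^{2}(\mathbb E|g_1|^{q})^{2}$, one gets $\mathbb P(Y\ge\tfrac12\mathbb EY)\ge\tfrac18$, hence $\mathbb E Y^{1/q}\gtrsim N^{1/q}(\mathbb E|g_1|^{q})^{1/q}\sim N^{1/q}q^{1/p}$. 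If $q>\frac{\log N}{\log C_0}$, then $N^{1/q}$ is bounded below by $1$ and above by $C_0$, so $N^{1/q}q^{1/p}\sim(\log N)^{1/p}$ when $q\le\log N$, while for $q>\log N$ the target is exactly $(\log N)^{1/p}$; in both sub-cases it suffices to prove $\mathbb E(\sum|g_i|^{q})^{1/q}\ge\mathbb E\max_{i\le N}|g_i|\gtrsim(\log N)^{1/p}$. For the last bound I would dichotomize once more: if $(\log N)^{1/p}$ is at most an absolute constant, then $\mathbb E\max_i|g_i|\ge\mathbb E|g_1|=\Gamma(\tfrac2p)/\Gamma(\tfrac1p)$, which is bounded below uniformly in $p\ge1$; otherwise $\log N\ge2^{p}$, so $\Gamma(\tfrac1p)\le p\le\log_2\log N$, and the elementary tail estimate $\mathbb P(|g_1|>t)\ge c\,e^{-t^{p}}/(\Gamma(\tfrac1p)t^{p-1})$ for $t\ge1$ (integrate $e^{-s^{p}}$ over $[t,t+\tfrac1{2pt^{p-1}}]$, on which $s^{p}\le t^{p}+1$), taken at $t=\tfrac12(\log N)^{1/p}$, yields $\mathbb P(|g_1|>t)\ge c\,N^{-1/2}/\bigl((\log N)\log_2\log N\bigr)\ge c_0/N$, so that $\mathbb P(\max_i|g_i|>t)\ge1-(1-c_0/N)^{N}\ge1-e^{-c_0}$ and $\mathbb E\max_i|g_i|\ge c'(\log N)^{1/p}$. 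The hypothesis $N\ge20p\Gamma(1+\tfrac1p)$, which forces $N\ge20$, is used exactly to make $\sqrt N$ dominate these logarithmic factors.

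\textbf{Main obstacle.} The architecture is routine; the real work is keeping every constant absolute over the whole range $1\le p\le q<\infty$. One must check that the Stirling estimates for $\Gamma(\tfrac{q+1}{p})$ and $\Gamma(\tfrac{2q+1}{p})$, and hence both $(\mathbb E|g_1|^{q})^{1/q}\sim q^{1/p}$ and $\mathbb E|g_1|^{2q}\le C_0^{q}(\mathbb E|g_1|^{q})^{2}$, hold with $p$-independent constants, and one must handle carefully the large-$p$ regime, where $(\log N)^{1/p}$ collapses to a bounded quantity and the extreme-value estimate must be replaced by the crude bound $\mathbb E\max_i|g_i|\ge\mathbb E|g_1|\sim1$; making the tail computation work under the weak hypothesis $N\ge20p\Gamma(1+\tfrac1p)$, rather than something like $N\ge e^{p}$, is the delicate point.
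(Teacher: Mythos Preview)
The paper does not prove this lemma at all: it is quoted verbatim as Lemma~2 of Schechtman--Zinn \cite{SZ} and used as a black box in the proof of Lemma~\ref{LemmaExpectationB_p}. There is therefore nothing in the paper to compare your argument against.

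Your proposal is a reasonable self-contained route and is essentially the standard one: moment computation via Gamma functions, Jensen for the upper bound, Paley--Zygmund for the lower bound in the range $q\lesssim\log N$, and an extreme-value tail estimate for $q\gtrsim\log N$. Two minor points are worth tightening. First, the sentence ``on $[t,t+\tfrac{1}{2pt^{p-1}}]$ one has $s^{p}\le t^{p}+1$'' is not literally true uniformly in $p$ (the mean-value bound picks up a factor $(s/t)^{p-1}$ which can be large); the clean way is to substitute $u=s^{p}$ and bound $\int_{t}^{\infty}e^{-s^{p}}\,ds=\tfrac{1}{p}\int_{t^{p}}^{\infty}u^{1/p-1}e^{-u}\,du\ge \tfrac{c}{p\,t^{p-1}}e^{-t^{p}}$ directly. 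Second, your final numerical step ``$\sqrt{N}$ dominates $(\log N)\log_{2}\log N$ because $N\ge 20$'' is borderline at $N=20$; you will want to enlarge the threshold constant $A$ in the dichotomy ``$(\log N)^{1/p}\le A$'' so that in the complementary regime $N$ is automatically large enough (e.g.\ $N\ge e^{4}$) for the inequality to hold with room to spare. With those adjustments the argument goes through with absolute constants.
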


Consequently, if $p\leq CN$, we have that
$$
\E\max_{1\leq i\leq N}|g_i|\simeq(\log N)^\frac1p.
$$

\begin{lemma}\label{LemmaExpectationB_p}
Let $X_1,\dots, X_N$ ($n\leq N\leq e^{\sqrt n}$) be independent copies of an isotropic log-concave random vector in $\R^n$, and let $f:\R^N\to\R$ be the function $f(y)=\E_{X_1,\dots,X_N}w(K_{N,y})$. There exist absolute constants $c_1$, $c_2$ such that for any $1\leq p\leq\infty$, if $y$ is a random vector uniformly distributed on $B_p^N$,
$$
c_1\frac{(\log N)^{\frac{1}{p}+\frac{1}{2}}}{N^\frac{1}{p}}\leq\E_y f(y)\leq c_2\frac{(\log N)^{\frac{1}{p}+\frac{1}{2}}}{N^\frac{1}{p}}.
$$
\end{lemma}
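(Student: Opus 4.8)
The plan is to combine three reductions. First, by Corollary~\ref{CorollaryOrlicz} we have $f(y)=\int_{S^{n-1}}\E_{X_1,\dots,X_N}h_{K_{N,y}}(\theta)\,d\sigma(\theta)\sim\int_{S^{n-1}}\Vert y\Vert_{M_\theta}\,d\sigma(\theta)$, so it is enough to estimate $\int_{S^{n-1}}\E_y\Vert y\Vert_{M_\theta}\,d\sigma(\theta)$. Second, I would use the Schechtman--Zinn description of the uniform measure on $B_p^N$: if $g=(g_1,\dots,g_N)$ has independent coordinates with density $\frac{1}{2\Gamma(1+1/p)}e^{-|t|^p}$ and $E$ is an independent standard exponential random variable, then $\frac{g}{(\Vert g\Vert_p^p+E)^{1/p}}$ is uniformly distributed on $B_p^N$. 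Since $y\mapsto\Vert y\Vert_{M_\theta}$ and $f$ are positively $1$-homogeneous, $\E_y\Vert y\Vert_{M_\theta}=\E\bigl((\Vert g\Vert_p^p+E)^{-1/p}\Vert g\Vert_{M_\theta}\bigr)$ and $\E_yf(y)=\E\bigl((\Vert g\Vert_p^p+E)^{-1/p}f(g)\bigr)$. The variables $|g_i|^p$ are subexponential with an absolute parameter and have mean $1/p$, so $\Vert g\Vert_p^p+E$ concentrates around $N/p$ and, off an event of probability at most $e^{-cN/p^2}$, lies in $[\frac{N}{2p},\frac{2N}{p}]$; on that event $(\Vert g\Vert_p^p+E)^{-1/p}\sim N^{-1/p}$ because $p^{1/p}$ is bounded above and below. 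The contribution of the exceptional event is controlled by the Lipschitz bound $f(g)\le C\sqrt{\log N}\,|g|$ from Lemma~\ref{LemmaLipsichtzConstant} together with the Cauchy--Schwarz inequality (for the lower bound one simply discards the exceptional event and absorbs the error using the crude bound $\E_gf(g)\gtrsim(\log N)^{1/p}$, which follows from $h_{K_{N,g}}(\theta)\ge\Vert g\Vert_\infty|\langle X_{i_0},\theta\rangle|$ with $i_0=\mathrm{argmax}_i|g_i|$ and $\E_X|\langle X_{i_0},\theta\rangle|\gtrsim1$). This can be carried out whenever $p$ is at most, say, $\sqrt N$, which covers all the interesting range; for larger $p$ both $N^{1/p}$ and $(\log N)^{1/p}$ are bounded and the estimate follows by a direct argument as for the cube $p=\infty$ ($K_{N,y}\subseteq K_N$ for the upper bound, and with probability $\ge\frac12$ at least $N/4$ of the $|y_i|$ exceed $\frac12$ for the lower bound). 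One concludes $\E_yf(y)\sim N^{-1/p}\,\E_gf(g)$.

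It then remains to prove $\E_gf(g)\sim(\log N)^{\frac1p+\frac12}$, and for this I would argue exactly as in the Gaussian case. The upper bound is immediate: $\E_g\E_Xh_{K_{N,g}}(\theta)=\E_g\E_X\max_i|g_i\langle X_i,\theta\rangle|\le(\E_g\max_i|g_i|)\,\E_Xh_{K_N}(\theta)$, and $\E_g\max_i|g_i|\sim(\log N)^{1/p}$ by the remark following Lemma~\ref{LEM_expectation_max_schechtman_zinn}; integrating in $\theta$ and using $\E_Xw(K_N)\le C\sqrt{\log N}$ (proved inside Lemma~\ref{LemmaLipsichtzConstant}) gives $\E_gf(g)\le C(\log N)^{\frac1p+\frac12}$. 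For the lower bound I split according to the number of vertices. If $n\le N\le n^2$, apply Theorem~\ref{THM Schuett Werner Litvak Gordon} to the i.i.d.\ variables $Y_i=g_i\langle X_i,\theta\rangle$ to get $\E_g\E_Xh_{K_{N,g}}(\theta)\sim\Vert(1,\dots,1)\Vert_{P_\theta}=\inf\{s>0:P_\theta(1/s)\le1/N\}$, where $P_\theta(s)=\int_0^s\int_{\{1/t\le|g_1\langle X_1,\theta\rangle|\}}|g_1\langle X_1,\theta\rangle|\,d\Pro\,dt$. Following the computation in Lemma~\ref{LemmaExpectationGaussianFewVertices}: first carry out the integral in the $g_1$-variable (using $\int_c^\infty be^{-b^p}\,db\sim\frac1p c^{2-p}e^{-c^p}$ for $c\ge2^{1/p}$, from the asymptotics of the incomplete $\Gamma$-function), then, for $\theta$ in Klartag's set $\Theta$, replace $f_\theta$ by $\gamma$ on the range $|t|\lesssim(\log N)^{1/2}$ — admissible since here $(\log N)^{1/2}\le(2\log n)^{1/2}\ll n^{\kappa}$ — and evaluate the resulting Gaussian-times-$p$-exponential integral by Laplace's method. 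This gives $P_\theta(1/s)\approx e^{-c_p s^{2p/(p+2)}}$ up to polynomial factors, with $c_p=\frac12 p^{2/(p+2)}+p^{-p/(p+2)}$, so that solving $P_\theta(1/s^{*})=1/N$ yields $s^{*}\sim c_p^{-(p+2)/(2p)}(\log N)^{(p+2)/(2p)}\sim(\log N)^{\frac1p+\frac12}$, the constants being absolute because $c_p$ stays between two absolute constants for all $p\in[1,\infty)$. Markov's inequality and $\sigma(\Theta)\ge1-Ce^{-\sqrt n}$ then give $\E_gf(g)\ge c(\log N)^{\frac1p+\frac12}$ in this range. If $N>n^2$, run the argument of Lemma~\ref{ExpectationGaussianManyPoints} with the Gaussian weights replaced by the $g_i$: the operator-norm estimate for $\Gamma(z)=(g_1\langle X_1,z\rangle,\dots,g_N\langle X_N,z\rangle)$ still holds (now with $\max_i|g_i|\lesssim(\log N)^{1/p}$ with high probability), the Paley--Zygmund step still works since $\E|g_1|^{2q}\le C^q\E|g_1|^q$ (log-convexity of $\Gamma$) and $\E|\langle X_1,\theta\rangle|^{2q}\le C^q\E|\langle X_1,\theta\rangle|^q$ (Borell), and one obtains, with probability tending to $1$, $K_{N,g}\supseteq c\,(\E|g_1|^q)^{1/q}Z_q(X)$ for $q\sim\beta\log(N/n)\sim\beta\log N$. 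Combining $(\E|g_1|^q)^{1/q}\sim(q/p)^{1/p}$ with the reverse H\"older inclusion $Z_q(X)\supseteq\frac{cq}{\log N}Z_{\log N}(X)$ for log-concave bodies shows $K_{N,g}\supseteq c(\log N)^{1/p}Z_{\log N}(X)$, with an absolute constant since $\beta^{1/p+1}p^{-1/p}$ is bounded below uniformly in $p$; hence $w(K_{N,g})\ge c(\log N)^{1/p}w(Z_{\log N}(X))\sim(\log N)^{\frac1p+\frac12}$ and $\E_gf(g)\gtrsim(\log N)^{\frac1p+\frac12}$. Combining the two ranges proves $\E_gf(g)\sim(\log N)^{\frac1p+\frac12}$, and with the previous reductions the lemma follows.

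The main obstacle is the Orlicz-function estimate in the range $n\le N\le n^2$: one must evaluate the mixed integral coming from the product $g_1\langle X_1,\theta\rangle$ of a (nearly) Gaussian and a $p$-exponential variable by Laplace's method, track all constants through the identity $s^{*}\sim c_p^{-(p+2)/(2p)}(\log N)^{(p+2)/(2p)}$, and verify that $c_p=\frac12 p^{2/(p+2)}+p^{-p/(p+2)}$ lies between two absolute constants for every $p\in[1,\infty)$ (one has $c_1=\frac32$ and $c_p\to\frac12$ as $p\to\infty$), so that the constants $c_1,c_2$ in the statement are indeed absolute. Everything else — the Schechtman--Zinn bookkeeping of the exceptional event and the two-case lower bound — is a routine adaptation of the Gaussian arguments already carried out in Lemmas~\ref{LemmaExpectationGaussianFewVertices} and~\ref{ExpectationGaussianManyPoints}.
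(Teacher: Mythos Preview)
Your proposal is correct and follows the same overall architecture as the paper: pass from $y$ uniform on $B_p^N$ to the $p$-exponential vector $G=(g_1,\dots,g_N)$, get the upper bound by factoring $\max_i|g_i\langle X_i,\theta\rangle|\leq(\max_i|g_i|)(\max_i|\langle X_i,\theta\rangle|)$ together with Lemma~\ref{LEM_expectation_max_schechtman_zinn}, and get the lower bound via the Orlicz-norm computation for $n\leq N\leq n^2$ and the $Z_q$-containment argument of Lemma~\ref{ExpectationGaussianManyPoints} for $N>n^2$. The paper streamlines two of your steps. First, the reduction $\E_y f(y)\sim N^{-1/p}\E_G f(G)$ is obtained \emph{exactly}, with no Schechtman--Zinn representation, concentration, or exceptional-event bookkeeping: one writes $e^{-\Vert x\Vert_p^p}=\int_{\Vert x\Vert_p^p}^\infty e^{-t}\,dt$, applies Fubini and a scaling, and reads off the identity
\[
\E_G\E_{X_1,\dots,X_N} h_{K_{N,G}}(\theta)=\frac{\Gamma\!\left(1+\tfrac{N+1}{p}\right)}{\Gamma\!\left(1+\tfrac{N}{p}\right)}\,\E_y\E_{X_1,\dots,X_N} h_{K_{N,y}}(\theta)\sim N^{1/p}\,\E_y\E_{X_1,\dots,X_N} h_{K_{N,y}}(\theta).
\]
Second, in the few-vertices range the paper avoids Laplace's method entirely: it simply plugs $s_0=(\alpha^2\log N)^{1/p+1/2}$ into the Orlicz function and checks by elementary estimates on $\int_{(2\alpha^2\log N)^{1/p}}^\infty b\,e^{-b^p}\,db$ (treated separately for $1\le p\le 2$, $2\le p\le\log N$, $p\ge\log N$) together with Lemma~\ref{ProbEstimateCLT} that $N_\theta(1/s_0)>1/N$ for $\alpha$ a small enough absolute constant.
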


\begin{proof}
Let $G=(g_1,\dots,g_N)$ be a random vector where $g_1,\dots g_N$ are independent identically distributed random variables with density with respect to the Lebesgue measure given by
$$
f_g(t)=\frac{e^{-|t|^p}}{2\Gamma\left(1+\frac{1}{p}\right)}.
$$
For any $\theta\in S^{n-1}$ and any $1\leq p<\infty$, by Fubini's theorem,
\begin{eqnarray*}
\E_G\E_{X_1,\dots,X_N} h_{K_{N,G}}(\theta)&=&\int_{\R^N}\E_{X_1,\dots,X_N}h_{K_{N,x}}(\theta)\frac{e^{-\Vert x\Vert_p^p}}{\left(2\Gamma\left(1+\frac{1}{p}\right)\right)^N}\,dx\cr
&=&\int_{\R^N}\E_{X_1,\dots,X_N}h_{K_{N,x}}(\theta)\int_{\Vert x\Vert_p^p}^\infty\frac{e^{-t}}{\left(2\Gamma\left(1+\frac{1}{p}\right)\right)^N}\,dt\,dx\cr
&=&\int_0^\infty\frac{e^{-t}}{\left(2\Gamma\left(1+\frac{1}{p}\right)\right)^N}\int_{t^{\frac{1}{p}}B_p^N}\E_{X_1,\dots,X_N}h_{K_{N,x}}(\theta)\,dx\,dt\cr
&=&\int_0^\infty\frac{t^{\frac{N+1}{p}}e^{-t}}{\left(2\Gamma\left(1+\frac{1}{p}\right)\right)^N}\int_{B_p^N}\E_{X_1,\dots,X_N}h_{K_{N,y}}(\theta)\,dy\,dt\cr
&=&\frac{\Gamma\left(1+\frac{N+1}{p}\right)}{\left(2\Gamma\left(1+\frac{1}{p}\right)\right)^N}\int_{B_p^N}\E_{X_1,\dots,X_N}h_{K_{N,y}}(\theta)\,dy\cr
&=&\frac{\Gamma\left(1+\frac{N+1}{p}\right)}{\Gamma\left(1+\frac{N}{p}\right)}\E_y\E_{X_1,\dots,X_N}h_{K_{N,y}}(\theta)\cr
& \sim & N^\frac{1}{p}\E_y\E_{X_1,\dots,X_N}h_{K_{N,y}}(\theta).
\end{eqnarray*}
On one hand, if $1\leq p\leq\log N$, by Lemma \ref{LEM_expectation_max_schechtman_zinn}
\begin{eqnarray*}
\E_G\E_{X_1,\dots,X_N} h_{K_{N,G}}(\theta)&=&\E_G\E_{X_1,\dots,X_N}\max_{1\leq i\leq N}|\langle g_iX_i,\theta\rangle|\cr&\leq&\E_G\max_{1\leq i\leq N}|g_i|\E_{X_1,\dots,X_N}\max_{1\leq i\leq N}|\langle X_i,\theta\rangle|\cr &\sim& (\log N)^\frac{1}{p}\E_{X_1,\dots,X_N} h_{K_N}(\theta),
\end{eqnarray*}
and integrating in $\theta\in S^{n-1}$ we obtain that there exists an absolute constant such that
$$
\E_y f(y)\leq c_2\frac{(\log N)^{\frac{1}{p}+\frac{1}{2}}}{N^\frac{1}{p}}.
$$
On the other hand, like before, we use different techniques to prove the lower estimate depending on the number of vertices. Assume first that $n\leq N\leq n^\delta$.
$$
\E_G\E_{X_1,\dots,X_N} h_{K_{N,G}}(\theta)=\E_G\E_{X_1,\dots,X_N}\max_{1\leq i\leq N}|\langle g_iX_i,\theta\rangle| \sim \Vert(1,\dots,1)\Vert_{N_\theta},
$$
where
$$
N_\theta\left(s\right)=2\int_0^{s}\int_{-\infty}^{\infty}\int_{\frac{1}{|a|t}}^\infty |a|bf_\theta(a)\frac{e^{-b^p}}{2\Gamma\left(1+\frac{1}{p}\right)}\,db\,da\,dt.
$$
Notice that
\begin{eqnarray*}
N_\theta\left(\frac{1}{s}\right)&=&2\int_0^{\frac{1}{s}}\int_{-\infty}^{\infty}\int_{\frac{1}{|a|t}}^\infty |a|bf_\theta(a)\frac{e^{-b^p}}{2\Gamma\left(1+\frac{1}{p}\right)}\,db\,da\,dt\cr
&\geq&\frac{1}{\Gamma\left(1+\frac{1}{p}\right)}\int_{\frac{1}{2^\frac1ps}}^{\frac{1}{s}}\int_{-\infty}^{\infty}\int_{\frac{1}{at}}^\infty |a|bf_\theta(a)e^{-b^p}\,db\,da\,dt\cr
&\geq&\frac{(2^\frac{1}{p}-1)}{2^\frac1ps\Gamma\left(1+\frac{1}{p}\right)}\int_{-\infty}^{\infty}\int_{\frac{2^\frac1ps}{a}}^\infty |a|bf_\theta(a)e^{-b^p}\,db\,da\cr
&\geq&\frac{(2^\frac{1}{p}-1)}{2^\frac1ps\Gamma\left(1+\frac{1}{p}\right)}\int_{\{|a|\geq s^{\frac{p}{p+2}}\}}\int_{\frac{2^\frac1ps}{a}}^\infty |a|bf_\theta(a)e^{-b^p}\,db\,da\cr
&\geq&\frac{(2^\frac{1}{p}-1)}{2^\frac1ps^{\frac{2}{p+2}}\Gamma\left(1+\frac{1}{p}\right)}\Pro\left(|\langle X,\theta\rangle|\geq s^{\frac{p}{p+2}}\right)\int_{2^\frac1ps^{\frac{2}{p+2}}}^\infty be^{-b^p}\,db.\cr
\end{eqnarray*}
Taking $s_0=(\alpha^2\log N)^{\frac{1}{p}+\frac12}$, we have
$$
N_\theta\left(\frac{1}{s_0}\right)\geq\frac{(2^\frac{1}{p}-1)}{2^\frac1p(\alpha^2\log N)^\frac1p\Gamma\left(1+\frac{1}{p}\right)}\Pro\left(|\langle X,\theta\rangle| \geq \alpha\sqrt{\log N}\right)\int_{(2\alpha^2\log N)^\frac{1}{p}}^\infty be^{-b^p}\,db.
$$
By Lemma \ref{ProbEstimateCLT} there exists a constant $\kappa$ such that if $\alpha^2<\frac{\kappa}{\delta}$, then for a set of directions $\Theta$ with $\sigma(\Theta)\geq 1-Ce^{-\sqrt n}$ we have that
$$
\Pro\left( |\langle X,\theta\rangle|\geq \alpha\sqrt{\log N} \right) \geq\frac{c}{N^{\frac{\alpha^2}{2}}\sqrt{\log N}}.
$$
Besides, if $1\leq p\leq 2$,
\begin{eqnarray*}
\int_{(2\alpha^2\log N)^\frac{1}{p}}^\infty be^{-b^p}\,db&=&\int_{(2\alpha^2\log N)^\frac{1}{p}}^\infty b^{2-p}b^{p-1}e^{-b^p}\,db\geq \frac{(2\alpha^2\log N)^\frac{2-p}{p}}{pN^{2\alpha^2}},
\end{eqnarray*}
and so, choosing $\alpha$ a constant small enough with $\alpha^2\leq\frac{\kappa}{\delta}$,
$$
N_\theta\left(\frac{1}{s_0}\right)>\frac{1}{N},
$$
and for every $\theta\in \Theta$
$$
\E_G\E_{X_1,\dots,X_N} h_{K_{N,G}}(\theta)\geq c_1(\delta)(\log N)^{\frac{1}{p}+\frac12}.
$$
If $2\leq p\leq\log N$
\begin{eqnarray*}
\int_{(2\alpha^2\log N)^\frac{1}{p}}^\infty be^{-b^p}db&=&\int_{(2\alpha^2\log N)^\frac{1}{p}}^\infty b^{2-p}b^{p-1}e^{-b^p}\,db\cr
&=&\frac{1}{(2\alpha^2\log N)^{\frac{p-2}{p}}pN^{2\alpha}}-\frac{p-2}{p}\int_{(2\alpha^2\log N)^\frac{1}{p}}^\infty \frac{b^{p-1}}{b^{2p-2}}e^{-b^p}\,db\cr
&=&\frac{1}{(2\alpha^2\log N)^{\frac{p-2}{p}}pN^{2\alpha^2}}-\frac{p-2}{p^2(2\alpha^2\log N)^{\frac{2p-2}{p}}N^{2\alpha^2}}\cr
&=&\frac{1}{(2\alpha^2\log N)^{\frac{p-2}{p}}pN^{2\alpha^2}}\left(1-\frac{p-2}{p(2\alpha^2\log N)}\right)\cr
\end{eqnarray*}
and also in this case, choosing $\alpha$ a constant small enough with $\alpha^2\leq\frac{\kappa}{\delta}$,
$$
N_\theta\left(\frac{1}{s_0}\right)>\frac{1}{N},
$$
and for every $\theta\in \Theta$
$$
\E_G\E_{X_1,\dots,X_N} h_{K_{N,G}}(\theta)\geq c_1(\delta)(\log N)^{\frac{1}{p}+\frac12}.
$$
Integrating on $S^{n-1}$, by Markov's inequality,
$$
\E_G f(G)\geq c_1(\delta)(\log N)^{\frac{1}{p}+\frac12},
$$
and so
$$
\E_y f(y)\geq c_1(\delta)\frac{(\log N)^{\frac{1}{p}+\frac{1}{2}}}{N^\frac{1}{p}}.
$$
If $p\geq \log N$ the estimate we have to prove is
$$
c_1(\delta)\sqrt{\log N}\leq\E_{B_p^N} f(y)\leq c_2\sqrt{\log N}.
$$
Since for every $y\in B_p^N$ $\max_{1\leq i\leq N}|y_i|\leq 1$,
$$f(y)\leq f(1,\dots,1)=\E_{X_1,\dots, X_N}w(K_N)\sim\sqrt{\log N}.$$
On the other hand
\begin{eqnarray*}
N_\theta\left(\frac{1}{s}\right)&=&2\int_0^{\frac{1}{s}}\int_{-\infty}^{\infty}\int_{\frac{1}{|a|t}}^\infty |a|bf_\theta(a)\frac{e^{-b^p}}{2\Gamma\left(1+\frac{1}{p}\right)}\,db\,da\,dt\cr
&\geq&\frac{1}{\Gamma\left(1+\frac{1}{p}\right)}\int_{\frac{1}{2s}}^{\frac{1}{s}}\int_{-\infty}^{\infty}\int_{\frac{1}{at}}^\infty |a|bf_\theta(a)e^{-b^p}\,db\,da\,dt\cr
&\geq&\frac{1}{2s\Gamma\left(1+\frac{1}{p}\right)}\int_{-\infty}^{\infty}\int_{\frac{2s}{a}}^\infty |a|bf_\theta(a)e^{-b^p}\,db\,da\cr
&\geq&\frac{1}{2s\Gamma\left(1+\frac{1}{p}\right)}\int_{\{|a|\geq s^{\frac{p}{p+2}}\}}\int_{\frac{2s}{a}}^\infty |a|bf_\theta(a)e^{-b^p}\,db\,da\cr
&\geq&\frac{1}{2s^{\frac{2}{p+2}}\Gamma\left(1+\frac{1}{p}\right)}\Pro\left(|\langle X,\theta\rangle|\geq s^{\frac{p}{p+2}}\right)\int_{2s^{\frac{2}{p+2}}}^\infty be^{-b^p}\,db .\cr
\end{eqnarray*}
Taking $s_0=(\alpha^2\log N)^{\frac{1}{p}+\frac12}$, we have
$$
N_\theta\left(\frac{1}{s_0}\right)\geq\frac{1}{2(\alpha^2\log N)^\frac1p\Gamma\left(1+\frac{1}{p}\right)}\Pro\left(|\langle X,\theta\rangle|\geq \alpha\sqrt{\log N}\right)\int_{2(\alpha^2\log N)^\frac{1}{p}}^\infty be^{-b^p}\,db.
$$
By Lemma \ref{ProbEstimateCLT} there exists a constant $\kappa$ such that if $\alpha^2<\frac{\kappa}{\delta}$, then for a set of directions $\Theta$ with $\sigma(\Theta)\geq 1-Ce^{-\sqrt n}$ we have that
$$
\Pro\left( |\langle X,\theta\rangle|\geq \alpha\sqrt{\log N}\right) \geq\frac{c}{N^{\frac{\alpha^2}{2}}\sqrt{\log N}}.
$$
Since $p\geq\log N$, $2(\alpha^2\log N)^\frac{1}{p}$ is smaller than some constant $C$ and so
$$
\int_{2(\alpha^2\log N)^\frac{1}{p}}^\infty be^{-b^p}\,db\geq\int_{C}^{2C}be^{-b^p}\,db\geq c^\prime.
$$
and like in the other cases, taking $\alpha$ a constant small enough and integrating on $S^{n-1}$ we obtain the result.

Now assume that $n^2\leq N\leq e^{\sqrt n}$. The proof in this case follows the one of lemma \ref{ExpectationGaussianManyPoints} so we just sketch it. Let $\Gamma:\ell_2^n\to\ell_2^N$ be the random operator
$$
\Gamma(y)=(g_1\langle X_1,y\rangle,\dots,g_N\langle X_N,y\rangle)
$$
and for every $\gamma>0$, let $\Omega_\gamma$ be the event
$$
\Omega_\gamma=\{\Gamma\,:\,\Vert\Gamma\Vert\leq\gamma\sqrt N(\log N)^\frac{1}{p}\}.
$$
We have that if $N\geq c_1n\log^2n$
\begin{eqnarray*}
\Pro_{G,X_1,\dots,X_N}(\Omega_\gamma^c)&\leq&\Pro_G\left(\max_{1\leq i\leq N}|g_i|>\alpha(\log N)^\frac{1}{p}\right)+\Pro_{X_1,\dots,X_N}\left(\max_{\theta\in S^{n-1}}\frac{1}{N}\sum_{i=1}^N\langle X_i,\theta\rangle^2>\frac{\gamma^2}{\alpha^2} \right)\cr
&\leq&N\Pro\left(|g_1|>\alpha(\log N)^\frac{1}{p}\right)+\Pro_{X_1,\dots,X_N}\left(\max_{\theta\in S^{n-1}}\frac{1}{N}\sum_{i=1}^N\langle X_i,\theta\rangle^2>\frac{\gamma^2}{\alpha^2} \right)\cr
&\leq&\frac{1}{p\Gamma\left(1+\frac{1}{p}\right)N^{\alpha^p-1}}+e^{-c_2\frac{\gamma}{\alpha}\left(\frac{N}{(\log N)(n\log n)}\right)^\frac14}.\cr
\end{eqnarray*}

Consequently, if we take $\alpha$ a constant big enough and $\gamma$ a constant big enough we have that
$$
\Pro(\Omega^c)\leq\varepsilon_n
$$
with $\varepsilon_n$ tending to 0 as $n$ goes to $\infty$.
Like in the proof of Lemma \ref{ExpectationGaussianManyPoints}, if $\beta\in (0,\frac{1}{2}]$, $q\sim\beta\log\frac{N}{n}$ and $\Gamma\in\Omega_\gamma$ we have
\begin{eqnarray*}
\Vert\Gamma(z)\Vert_0\leq\frac{1}{\sqrt k}|\Gamma(z)|&\leq&\gamma\sqrt{\frac Nk}(\log N)^\frac{1}{p}|z|\leq\gamma\sqrt{\frac Nk}\sqrt{\log N}\left(\E|\langle X_1,z\rangle|^q\right)^\frac{1}{q}\cr
&\leq& \frac{C}{\beta^\frac1p}\gamma\sqrt{\frac Nk}(\E|g_1|^q)^\frac{1}{q}\left(\E|\langle X_1,z\rangle|^q\right)^\frac{1}{q},
\end{eqnarray*}
since $(\log N)^\frac{1}{p}\sim\left(\log\frac{N}{n}\right)^\frac{1}{p}\sim\frac{1}{\beta^\frac{1}{p}}(\E|g_1|^q)^\frac{1}{q}$ because $N\geq n^2$. Thus, for every $z\in S$ there exists $u\in U$ such that $\frac{1}{2}\left(\E|g_1|^q\right)^\frac{1}{q}\left(\E|\langle X_1,z-u\rangle|^q\right)^\frac{1}{q}\leq \delta$ and so
$$
\Vert\Gamma(u)\Vert_0\leq\Vert\Gamma(z)\Vert_0+C\frac{\gamma}{\beta^{\frac1p}}\sqrt{\frac Nk}\delta.
$$
Choosing $\delta=\frac{\beta^{\frac1p}\sqrt k}{4C\gamma\sqrt N}$ we have that
$$
\Pro_{G,X_1,\dots,X_N}\left(\Gamma\in\Omega_\gamma\,:\,\exists z\in\R^n\,:\,\Vert\Gamma(z)\Vert_0\leq\frac{1}{8}\left(\E|g_1|^q\right)^\frac1q\left(\E|\langle X_1,z\rangle|^q\right)^\frac1q\right)
\leq e^{-cN^{1-\beta}n^\beta}
$$
if $N\geq C(\beta) n$.

Consequently, choosing $\beta$ a constant  in $(0,\frac{1}{2}]$ with probability greater than $1-e^{-cN^{1-\beta}n^\beta}-\varepsilon_n$ we have that
$$
K_{N,G}\supseteq \frac{1}{8}(\E|g_1|^q)^\frac{1}{q}Z_q(X)\supseteq c(\log N)^{\frac1p}Z_{\log N}(X)
$$

Using Markov's inequality we obtain the desired estimate.

In the case $p=\infty$ we proceed in the same way taking $G$ a random vector uniformly distributed in $B_\infty^N$.
\end{proof}

Let us now compute the expected value of $f$ when $y=(\xi_1,\dots,\xi_N)$, where $\xi_1,\ldots,\xi_N$ are independent copies of a $p$-stable random variable. It was proved in \cite{GLSW2} that for $1<p<2$ a sequence of independent copies of a $p$-stable random variables $\xi$ generates the corresponding $\ell_p$-norm, {\it i.e.}, if $\xi_1,\ldots,\xi_N$ are independent copies of a $p$-stable random variable, then
\begin{equation}\label{EQU_p_norm}
\mathbb E \max_{1\leq i \leq N} |x_i\xi_i| \sim \|x\|_p,
\end{equation}
for any $x\in\R^N$. The corresponding result for $\ell_p$-norms where $p\geq 2$ was proved in \cite{PR}, using log-$\gamma_{1,p}$ distributed random variables.

\begin{lemma}\label{Lemma_Expectation_p-stable}
Let $X_1,\dots, X_N$ be independent copies of an isotropic log-concave random vector in $\R^n$, and let $f:\R^N\to\R$ be the function $f(y)=\E_{X_1,\dots,X_N}w(K_{N,y})$. Let $1<p<2$ and $y=(\xi_1,\ldots,\xi_N)$, where $\xi_1,\dots,\xi_N$ are independent copies of a $p$-stable random variable $\xi$. Then
$$
\mathbb E_y f(y) \sim N^{\frac{1}{p}}.
$$
\end{lemma}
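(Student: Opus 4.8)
The plan is to use Fubini together with the fact, recorded in (\ref{EQU_p_norm}), that independent copies of a $p$-stable variable generate the $\ell_p$-norm, and thereby reduce the whole computation to two-sided estimates of order $N^{1/p}$ for $\E_{X_1,\dots,X_N}\big(\sum_{i=1}^N|\langle X_i,\theta\rangle|^p\big)^{1/p}$, uniformly in $\theta\in S^{n-1}$, which in turn follow from the log-concavity of the one-dimensional marginals $\langle X,\theta\rangle$.

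First I would note that $h_{K_{N,y}}(\theta)=\max_{1\leq i\leq N}|\xi_i\langle X_i,\theta\rangle|$, so that since the integrand is nonnegative, Tonelli's theorem gives
$$
\E_y f(y)=\int_{S^{n-1}}\E_{X_1,\dots,X_N}\Big(\E_y\max_{1\leq i\leq N}|\xi_i\langle X_i,\theta\rangle|\Big)\,d\sigma(\theta).
$$
Conditionally on $X_1,\dots,X_N$ the vector $(\langle X_1,\theta\rangle,\dots,\langle X_N,\theta\rangle)$ is deterministic, so (\ref{EQU_p_norm}) yields $\E_y\max_{1\leq i\leq N}|\xi_i\langle X_i,\theta\rangle|\sim\big(\sum_{i=1}^N|\langle X_i,\theta\rangle|^p\big)^{1/p}$, and hence $\E_y f(y)\sim\int_{S^{n-1}}\E_{X_1,\dots,X_N}\big(\sum_{i=1}^N|\langle X_i,\theta\rangle|^p\big)^{1/p}\,d\sigma(\theta)$.

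For the upper bound, since $p>1$ the map $t\mapsto t^{1/p}$ is concave, so Jensen's inequality together with the elementary bound $h_{Z_p(X)}(\theta)=(\E|\langle X,\theta\rangle|^p)^{1/p}\leq(\E|\langle X,\theta\rangle|^2)^{1/2}=1$ (valid for $p\leq2$, using that $X$ is isotropic) gives
$$
\E_{X_1,\dots,X_N}\Big(\sum_{i=1}^N|\langle X_i,\theta\rangle|^p\Big)^{1/p}\leq\big(N\,\E|\langle X,\theta\rangle|^p\big)^{1/p}\leq N^{1/p},
$$
and integrating over $S^{n-1}$ gives $\E_y f(y)\leq c\,N^{1/p}$. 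For the lower bound I would fix $\theta$, set $Y_i=|\langle X_i,\theta\rangle|^p$, and apply the Paley--Zygmund inequality to $S:=\sum_{i=1}^N Y_i$. This is where log-concavity enters: by Borell's lemma (reverse Hölder for the log-concave marginal $\langle X,\theta\rangle$) one has $\E Y_1=\E|\langle X,\theta\rangle|^p\geq c_0>0$ (comparing with the second moment, which equals $1$) and $\E Y_1^2=\E|\langle X,\theta\rangle|^{2p}\leq C(\E Y_1)^2$; hence $\E S^2\leq 2N^2(\E Y_1)^2$ once $N$ exceeds an absolute constant, so $\Pro\big(S\geq\tfrac12 N\,\E Y_1\big)\geq\tfrac18$ and therefore $\E_{X_1,\dots,X_N}S^{1/p}\geq c\,(N\,\E Y_1)^{1/p}\geq c'N^{1/p}$, uniformly in $\theta$; integrating over $S^{n-1}$ finishes the argument, the finitely many small values of $N$ being trivial since then $N^{1/p}\sim 1$ and also $\E_y f(y)\geq\E_y\E_{X_1}\int_{S^{n-1}}|\xi_1\langle X_1,\theta\rangle|\,d\sigma(\theta)\sim 1$.

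The step I expect to be the main obstacle is the lower estimate: concavity of $t\mapsto t^{1/p}$ makes Jensen useless in that direction, so one genuinely has to prove that $\sum_{i=1}^N|\langle X_i,\theta\rangle|^p$ is, with probability bounded away from $0$, of its expected order $N$ rather than much smaller. This is exactly what the reverse Hölder inequalities for log-concave marginals deliver through Paley--Zygmund; note that, in contrast to the Gaussian and $B_p^N$ cases, no appeal to Klartag's central limit theorem is needed here, because these estimates hold for every direction $\theta$.
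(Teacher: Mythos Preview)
Your argument is correct and follows the paper's overall strategy: Fubini plus (\ref{EQU_p_norm}) to reduce to $\E_{X_1,\dots,X_N}\bigl(\sum_{i=1}^N|\langle X_i,\theta\rangle|^p\bigr)^{1/p}$, and the upper bound is handled the same way (the paper phrases it as two applications of H\"older, you as Jensen plus monotonicity in $p$).

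The only genuine difference is in the lower bound. You invoke Paley--Zygmund together with Borell's reverse H\"older to show that $\sum_i|\langle X_i,\theta\rangle|^p$ is of order $N$ with probability bounded below, which works but is more than necessary. The paper observes instead that $u\mapsto\|u\|_p$ is convex on $\R^N$, so the \emph{multivariate} Jensen inequality gives in one line
\[
\E_{X_1,\dots,X_N}\bigl\|(\langle X_i,\theta\rangle)_i\bigr\|_p \;\ge\; \bigl\|(\E|\langle X_i,\theta\rangle|)_i\bigr\|_p \;=\; N^{1/p}\,\E|\langle X_1,\theta\rangle| \;\ge\; c\,N^{1/p},
\]
the last step again by Borell. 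Your remark that ``concavity of $t\mapsto t^{1/p}$ makes Jensen useless in that direction'' is only true if one insists on applying Jensen to that scalar function; applying it to the full norm yields the lower estimate immediately and avoids the second-moment/Paley--Zygmund detour as well as the separate treatment of small $N$.
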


\begin{proof}
First of all, let $\theta \in S^{n-1}$. Then, using (\ref{EQU_p_norm}), we have
$$
\mathbb E_y \mathbb E_{X_1,\dots,X_N} h_{K_{N,y}}(\theta) = \mathbb E_y \mathbb E_{X_1,\dots,X_N} \max_{1\leq i \leq N} |\xi_i \langle X_i,\theta \rangle| \sim \mathbb E_{X_1,\dots,X_N} \| (\langle X_i,\theta\rangle)_{i=1}^N\|_p.
$$
Therefore, using Jensen's inequality, we obtain that
$$
\mathbb E_y \mathbb E_{X_1,\dots,X_N} h_{K_{N,y}}(\theta) \geq c  \| (\mathbb E_{X_i} |\langle X_i,\theta\rangle|)_{i=1}^N\|_p.
$$
Since, as a consequence of Borell's lemma, $\mathbb E_X |\langle X_i,\theta\rangle| \sim1$, we obtain the lower bound
$$
\mathbb E_y \mathbb E_{X_1,\dots, X_N} h_{K_{N,y}}(\theta) \geq c N^{\frac{1}{p}}.
$$
On the other hand, using H\"older's inequality, we get
$$
\mathbb E_{X_1,\dots,X_N} \| (\langle X_i,\theta\rangle)_{i=1}^N\|_p \leq  \left( \sum_{i=1}^N \mathbb E_X |\langle X_i,\theta\rangle|^p\right)^{\frac{1}{p}}.
$$
Again, since by H\"older's inequality $\mathbb E_{X_1,\dots,X_N} |\langle X_i,\theta\rangle|^p \leq 1$, we obtain the upper bound
$$
\mathbb E_y \mathbb E_X h_{K_{N,y}}(\theta) \leq C N^{\frac{1}{p}}.
$$
The bounds do not depend on the direction $\theta\in S^{n-1}$. Therefore, taking the average on the sphere and using Fubini's theorem we get
$$
\mathbb E_y f(y) = \int_{S^{n-1}} \mathbb E_y \mathbb E_{X_1,\dots, X_N} h_{K_{N,y}}(\theta) \, d\sigma(\theta) \sim N^{\frac{1}{p}}.
$$
\end{proof}

\begin{rmk}
Exchanging $p$-stable by log-$\gamma_{1,p}$ distributed random variables and using the results from \cite{PR}, Lemma \ref{Lemma_Expectation_p-stable} can be obtained for $p\geq 2$ with constants only depending on $p$.
\end{rmk}

As mentioned before, there is an Orlicz norm $\Vert\cdot\Vert_{M_\theta}$ associated to every direction $\theta$ in the unit sphere. The proofs of the three previous lemmas give us the following properties of these norms:

\begin{cor}
Let $X$ be an isotropic log-concave random vector in $\R^n$ and for every $\theta$ let $\Vert\cdot\Vert_{M_\theta}$ be the Orlicz norm in $\R^N$ defined in Theorem \ref{THM Schuett Werner Litvak Gordon}. Then
\begin{itemize}
\item[a)] For every $\theta$ and every $N$, if $y=(\xi_1,\dots,\xi_N)$ is a random vector where $\xi_1,\dots,\xi_N$ are independent copies of a $p$-stable random variable $\xi$ ($1<p<2$),
    $$
    \E_y\Vert y\Vert_{M_\theta}\sim N^\frac{1}{p}
    $$
\item[b)] There exists a set $\Theta\subset S^{n-1}$ with $\sigma(\Theta)\geq 1-Ce^{-\sqrt n}$ such that if $n\leq N\leq n^\delta$ and $G$ is a Gaussian random vector in $\R^N$ or $y$ is a random vector uniformly distributed in $B_p^N$, then, for every $\theta\in\Theta$,
    $$
    c(\delta)\log N \leq \E_G\Vert G\Vert_{M_\theta}\leq C\log N
    $$
and
    $$
    c(\delta)\frac{(\log N)^{\frac{1}{p}+\frac{1}{2}}}{N^\frac{1}{p}}\leq\E_y\Vert y\Vert_{M_\theta}\leq C\frac{(\log N)^{\frac{1}{p}+\frac{1}{2}}}{N^\frac{1}{p}}.
    $$
\end{itemize}
\end{cor}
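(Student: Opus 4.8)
The plan is to observe that each of the three assertions is a direction-by-direction reading of estimates already contained in the proofs of Lemmas~\ref{Lemma_Expectation_p-stable}, \ref{LemmaExpectationGaussianFewVertices} and~\ref{LemmaExpectationB_p}. The key point is that $\Vert y\Vert_{M_\theta}$ is a \emph{deterministic} quantity: the randomness of $X$ has been integrated out in the definition of $M_\theta$. Hence Corollary~\ref{CorollaryOrlicz} says $\E_{X_1,\dots,X_N}h_{K_{N,y}}(\theta)\sim\Vert y\Vert_{M_\theta}$ for every fixed $y$, and taking expectation in $y$ (Fubini) gives, for any random vector $y$ independent of $X_1,\dots,X_N$,
$$
\E_y\Vert y\Vert_{M_\theta}\ \sim\ \E_y\E_{X_1,\dots,X_N}h_{K_{N,y}}(\theta)\ =\ \E_y\E_{X_1,\dots,X_N}\max_{1\leq i\leq N}|\langle y_iX_i,\theta\rangle|.
$$
So it is enough to estimate the last quantity, for each fixed $\theta$, for the three perturbations.

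For a): in the proof of Lemma~\ref{Lemma_Expectation_p-stable} the only information about $\theta$ that is used is $\E_X|\langle X,\theta\rangle|\sim1$ (Borell's lemma) and $\E_X|\langle X,\theta\rangle|^p\leq1$ (H\"older), both uniform over $\theta\in S^{n-1}$. Thus that argument in fact proves $cN^{1/p}\leq\E_y\E_{X_1,\dots,X_N}h_{K_{N,y}}(\theta)\leq CN^{1/p}$ for every $\theta$, and the equivalence in a) follows from the display above. For the lower bounds in b): the ``few vertices'' parts of the proofs of Lemmas~\ref{LemmaExpectationGaussianFewVertices} and~\ref{LemmaExpectationB_p}, through Lemma~\ref{ProbEstimateCLT}, yield a single set $\Theta\subseteq S^{n-1}$ (the central limit set of Theorem~\ref{THM_CLT}, intersected with the set of Lemma~\ref{ProbEstimateCLT}) with $\sigma(\Theta)\geq1-Ce^{-\sqrt n}$ on which, for $n\leq N\leq n^\delta$,
$$
\E_G\E_{X_1,\dots,X_N}h_{K_{N,G}}(\theta)\ \geq\ c(\delta)\log N,\qquad \E_y\E_{X_1,\dots,X_N}h_{K_{N,y}}(\theta)\ \geq\ c(\delta)\,\frac{(\log N)^{\frac1p+\frac12}}{N^{1/p}}.
$$
Together with the first display this gives the two lower bounds in b) on $\Theta$.

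For the upper bounds in b) a small extra step is needed, since in Lemmas~\ref{LemmaExpectationGaussianFewVertices} and~\ref{LemmaExpectationB_p} those bounds were only obtained \emph{after} averaging over $S^{n-1}$: for an unfavourable direction $\theta$, $\E_X\max_{1\leq i\leq N}|\langle X_i,\theta\rangle|$ can be of order $\log N$, not $\sqrt{\log N}$. Restricting to $\Theta$ fixes this. By property (2) of Theorem~\ref{THM_CLT}, for $\theta\in\Theta$ one has $f_\theta(t)\leq(1+n^{-\kappa})\gamma(t)$ for $|t|\leq n^\kappa$, hence $\Pro(|\langle X,\theta\rangle|\geq t)\leq Ce^{-t^2/2}$ in that range; and for $t\geq n^\kappa$ the universal subexponential tail of a one-dimensional isotropic log-concave variable, $\Pro(|\langle X,\theta\rangle|\geq t)\leq Ce^{-ct}$, together with $N\leq n^\delta$ (so that $\sqrt{\log N}\ll n^\kappa$), makes the far tail negligible. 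Estimating $\E_X\max_{1\leq i\leq N}|\langle X_i,\theta\rangle|=\int_0^\infty\Pro\big(\max_{1\leq i\leq N}|\langle X_i,\theta\rangle|\geq t\big)\,dt$ by $\int_0^\infty\min\{1,N\Pro(|\langle X,\theta\rangle|\geq t)\}\,dt$ then gives $\E_X\max_{1\leq i\leq N}|\langle X_i,\theta\rangle|\leq C\sqrt{\log N}$ for all $\theta\in\Theta$. Consequently, for a standard Gaussian $G$,
$$
\E_G\Vert G\Vert_{M_\theta}\ \sim\ \E_G\E_{X_1,\dots,X_N}\max_{1\leq i\leq N}|g_i\langle X_i,\theta\rangle|\ \leq\ \Big(\E_G\max_{1\leq i\leq N}|g_i|\Big)\Big(\E_{X_1,\dots,X_N}\max_{1\leq i\leq N}|\langle X_i,\theta\rangle|\Big)\ \leq\ C\log N,
$$
and for $y$ uniformly distributed in $B_p^N$ I would pass, exactly as in the proof of Lemma~\ref{LemmaExpectationB_p}, to the random vector $G$ with i.i.d.\ coordinates of density proportional to $e^{-|t|^p}$, for which $\E_G\E_{X_1,\dots,X_N}h_{K_{N,G}}(\theta)\sim N^{1/p}\,\E_y\E_{X_1,\dots,X_N}h_{K_{N,y}}(\theta)$; bounding $\E_G\E_{X_1,\dots,X_N}h_{K_{N,G}}(\theta)\leq(\E_G\max_i|g_i|)(\E_X\max_i|\langle X_i,\theta\rangle|)\sim(\log N)^{1/p}\sqrt{\log N}$ via Lemma~\ref{LEM_expectation_max_schechtman_zinn} and dividing by $N^{1/p}$ gives the upper bound in b).

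The only genuine obstacle is this per-direction upper bound in b): the lemmas supply only averaged versions, and one really must restrict to the central limit set $\Theta$ and splice the Gaussian tail (valid on $|t|\leq n^\kappa$) with the subexponential tail (valid for all $t$). This is precisely where the hypothesis $N\leq n^\delta$, which forces $\sqrt{\log N}\ll n^\kappa$, is indispensable; the rest is bookkeeping with Fubini and Corollary~\ref{CorollaryOrlicz}.
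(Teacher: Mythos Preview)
Your proposal is correct and follows essentially the same approach as the paper: both recognize that parts a) and the lower bounds in b) are already contained direction-by-direction in the proofs of Lemmas~\ref{Lemma_Expectation_p-stable}, \ref{LemmaExpectationGaussianFewVertices}, and~\ref{LemmaExpectationB_p}, and that the only gap is the per-direction upper bound $\E_{X_1,\dots,X_N}h_{K_N}(\theta)\leq C\sqrt{\log N}$ on the CLT set $\Theta$. The paper fills this gap slightly differently, bounding $\E_{X_1,\dots,X_N}h_{K_N}(\theta)\leq C\,h_{Z_{\log N}(X)}(\theta)\sim h_{K_{1/N}}(\theta)$ via the centroid/floating body equivalence of \cite{PW} and then using Theorem~\ref{THM_CLT} to show $h_{K_{1/N}}(\theta)\leq C\sqrt{\log N}$ for $\theta\in\Theta$; your direct tail-integration argument (Gaussian tail on $|t|\leq n^\kappa$ spliced with the universal subexponential tail beyond) is a more elementary alternative that avoids this machinery but reaches the same conclusion for the same reason, namely that $N\leq n^\delta$ keeps the relevant threshold $\sqrt{\log N}$ well inside the Gaussian window.
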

\begin{proof}
The proof is contained in the three previous lemmas, using that $\Vert y\Vert_{M_\theta}\sim\E_{X_1,\dots,X_N} h_{K_{N,y}}(\theta)$.
The only thing left to prove is that if $\theta\in\Theta$ then $\E_{X_1,\dots,X_N}h_{K_N}(\theta)\leq C\sqrt{\log N}$, which is a consequence of the central limit theorem. Let us denote by $K_\delta$ the floating body defined by
$$
h_{K_\delta}(\theta)=\sup\{t>0\,:\,\Pro\left( |\langle x,\theta\rangle|\geq t\right)\geq \delta\}.
$$
It was proved in \cite{PW} that $K_\delta$ is homothetic to $Z_{\log\frac1\delta}$ with absolute constants. Then
$$
\E_{X_1,\dots,X_N}h_{K_N}(\theta)\leq C\E\left(\sum_{i=1}^N|\langle X_i,\theta\rangle|^{\log N}\right)^{\frac{1}{\log N}} \leq C h_{Z_{\log N}(X)}(\theta)\sim h_{K_{\frac1N}}(\theta)
$$
and if $\theta$ is in the set $\Theta$ given by the central limit theorem, then
\begin{eqnarray*}
\Pro\left(|\langle X,\theta\rangle|\geq\beta\sqrt{\log N}\right) &=& \int_{-\beta\sqrt{\log N}}^{\beta\sqrt{\log N}}f_\theta(t)dt\cr
&\leq&\left(1+\frac{1}{n^\kappa}\right)2\int_0^{\beta\sqrt{\log N}}\gamma(t)dt\cr
&\leq&\frac{8}{N^\frac{\beta^2}{2}\beta\sqrt{\log N}}<\frac{1}{N},
\end{eqnarray*}
if $\beta$ is a constant big enough.
\end{proof}

Now we can apply the concentration of measure results and prove the theorems.

\begin{proof}[Proof of Theorems \ref{THM_Gaussian_perturbation}, \ref{THM_Sphere_perturbation}, \ref{THM_B_p_perturbation} and \ref{THM_p-stable_perturbation}:]
Let $f:\R^N\to \R$ be
$$
f(y)=\E_{X_1,\dots,X_N}w(K_{N,y}).
$$
By Theorem \ref{THM_Gaussian_concentration}, for any $t>0$
$$
\Pro_G\left(\left|\frac{f(G)}{\E_G f(G)}-1\right|\leq t\right)\geq 1-e^{-\frac{ct^2(\E_Gf(G))^2}{L^2}},
$$
where $L$ is the Lipschitz constant of $f$. Thus
$$
\Pro_G\left((1-t)\E_{G}f(G)\leq f(G)\leq (1+t)\E_Gf(G)\right)\geq 1-e^{-\frac{ct^2(\E_Gf(G))^2}{L^2}},
$$
Applying Lemmas \ref{LemmaLipsichtzConstant} and Corollary \ref{CorExpectationGaussian}to estimate $L$ and $\E_Gf(G)$ we have
$$
\Pro_G\left(c_1(\delta)(1-t)\log N\leq f(G)\leq c_2(1+t)\log N\right)\geq 1-e^{-c(\delta)t^2\log N}.
$$
In the same way, applying Theorem \ref{THM_Sphere_concentration} and Lemmas \ref{LemmaLipsichtzConstant} and Corollary \ref{LemmaExpectationSphere} we prove Theorem \ref{THM_Sphere_perturbation}.
Applying Theorem \ref{THM_B_p_concentration} and Lemmas \ref{LemmaLipsichtzConstant} and \ref{LemmaExpectationB_p} we prove Theorem \ref{THM_B_p_perturbation}.
Finally, applying Theorem \ref{THM_p_stable_concentration} and Lemmas \ref{LemmaLipsichtzConstant} and \ref{Lemma_Expectation_p-stable} we obtain the proof of Theorem \ref{THM_B_p_perturbation}.
\end{proof}

\section{Partial Results for Arbitrary Perturbations of Random Polytopes}\label{ArbitraryPerturbations}

In this section we will give a lower estimate for the expected value of the mean width of $K_{N,y}$ when $y$ is not a random vector. We will prove the following:

\begin{proposition}
Let $X_1,\dots, X_N$ be independent copies of an isotropic log-concave random vector in $\R^n$. There exist absolute constants $c_1,c_2$ such that for any $y\in \R^N$, if we call
$$
I(y):=\left\{k\in\{1,\dots, n\}\,:\,\frac{1}{|y_k^*|\sqrt{\frac{1}{k}\sum_{i=1}^k\frac{1}{|y_i^*|^2}}} \left(\leq\frac{|y_1^*|}{|y_k^*|}\right)\leq n^{c_1}\right\},
$$
where $y^*$ denotes the decreasing rearrangement of $y$, then
$$
\E w(K_{N,y})\geq \sup_{k\in I(y)}\frac{c_2\sqrt{\log (k+1)}}{\sqrt{\frac{1}{k}\sum_{i=1}^k\frac{1}{|y_k^*|^2}}} \,.
$$
\end{proposition}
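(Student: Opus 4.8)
The plan is, for each fixed $k\in I(y)$, to exhibit inside $K_{N,y}$ a homothet of an unperturbed symmetric random polytope on about $k$ independent copies of $X$, with homothety ratio comparable to $m_k:=\bigl(\tfrac1k\sum_{i=1}^k|y_i^*|^{-2}\bigr)^{-1/2}$, and then to invoke the standard estimate $\mathbb E\,w\bigl(\mathrm{conv}\{\pm X_1,\dots,\pm X_s\}\bigr)\gtrsim\sqrt{\log(s+1)}$. First I would note that, since $X_1,\dots,X_N$ are i.i.d., $\mathbb E\,w(K_{N,y})$ is invariant under permutations of the coordinates of $y$; so one may assume $|y_1|\ge\cdots\ge|y_N|$, i.e.\ $|y_i|=|y_i^*|$, and (the opposite case being trivial) that $|y_k^*|>0$, whence $|y_k^*|\le m_k\le|y_1^*|$. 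One may also assume $n\ge n_0$ for a suitable absolute constant $n_0$, because $K_{N,y}\supseteq|y_1^*|\,\mathrm{conv}\{\pm X_1\}$ gives $\mathbb E\,w(K_{N,y})\ge|y_1^*|\int_{S^{n-1}}\mathbb E|\langle X_1,\theta\rangle|\,d\sigma(\theta)\gtrsim|y_1^*|\ge m_k$ by Borell's lemma, which already settles the claim whenever $\log(k+1)$ is bounded.

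The combinatorial heart of the argument is a Markov estimate. The numbers $|y_i^*|^{-2}$, $i=1,\dots,k$, have mean $m_k^{-2}$, so $\#\{i\le k:|y_i^*|^{-2}>4m_k^{-2}\}\le k/4$; equivalently the set $S:=\{i\le k:|y_i^*|\ge m_k/2\}$ has $|S|\ge\tfrac34 k$. For $i\in S$ one has $m_k/2\le|y_i^*|$, so each of $\pm\tfrac{m_k}{2}X_i$ is a convex combination of $\pm|y_i^*|X_i$ and $0$; therefore
$$
K_{N,y}=\mathrm{conv}\{\pm|y_i^*|X_i:i\le N\}\ \supseteq\ \mathrm{conv}\bigl\{\pm\tfrac{m_k}{2}X_i:i\in S\bigr\}=\tfrac{m_k}{2}\,\mathrm{conv}\{\pm X_i:i\in S\}.
$$
Taking support functions and integrating over $S^{n-1}$ gives $\mathbb E\,w(K_{N,y})\ge\tfrac{m_k}{2}\,\mathbb E\,w\bigl(\mathrm{conv}\{\pm X_i:i\in S\}\bigr)$, and $\mathrm{conv}\{\pm X_i:i\in S\}$ has the law of the symmetric random polytope on $|S|$ independent copies of $X$.

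It then remains to prove $\mathbb E\,w\bigl(\mathrm{conv}\{\pm X_1,\dots,\pm X_s\}\bigr)\gtrsim\sqrt{\log(s+1)}$ uniformly for $1\le s\le n$. Using this with $s=|S|$, together with $\log(|S|+1)\gtrsim\log(k+1)$ for $k\ge 3$ and the first step for $k\in\{1,2\}$, we obtain $\mathbb E\,w(K_{N,y})\gtrsim m_k\sqrt{\log(k+1)}$, and the supremum over $k\in I(y)$ then gives the proposition. For $s=1$ the required bound is $\mathbb E\,w(\mathrm{conv}\{\pm X_1\})=\int_{S^{n-1}}\mathbb E|\langle X_1,\theta\rangle|\,d\sigma(\theta)\sim1$ (Borell). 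For $2\le s\le n$ I would fix $\theta$ in the set $\Theta$ of Klartag's central limit theorem (Theorem~\ref{THM_CLT}); as $s\le n$, the scale $\lambda=\sqrt{\log s}$ lies in the window $|\lambda|\le n^{\kappa}$ where $f_\theta\ge\tfrac12\gamma$, so $\mathbb P(|\langle X,\theta\rangle|\ge\lambda)\gtrsim s^{-1/2}/\sqrt{\log(s+1)}$, and consequently
$$
\mathbb E\max_{i\le s}|\langle X_i,\theta\rangle|\ \ge\ \lambda\Bigl(1-\bigl(1-\mathbb P(|\langle X,\theta\rangle|\ge\lambda)\bigr)^{s}\Bigr)\ \gtrsim\ \sqrt{\log(s+1)}
$$
uniformly in $\theta\in\Theta$ --- this is exactly the mechanism behind Lemma~\ref{ProbEstimateCLT}. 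Integrating over $\Theta$, of measure $\ge 1-Ce^{-\sqrt n}\ge\tfrac12$, finishes the auxiliary bound; alternatively one may quote $\mathbb E\,w(\mathrm{conv}\{\pm X_1,\dots,\pm X_s\})\sim\sqrt{\log(s+1)}$ for $s\le e^{\sqrt n}$ from \cite{DGT2}, \cite{Pa}.

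I expect the only real difficulty to be bookkeeping at the endpoints: keeping the scale $\sqrt{\log(s+1)}$ inside Klartag's effective window, which is precisely what $s\le n$ --- hence $k\le n$, part of the definition of $I(y)$ --- secures, and treating $k\in\{1,2\}$ by hand. I would also point out that the defining inequality of $I(y)$, namely $m_k/|y_k^*|\le n^{c_1}$, is not used above, so in fact the estimate holds for every $k\in\{1,\dots,n\}$; that inequality would instead be where the admissible scales enter the alternative route that estimates $\mathbb E\,h_{K_{N,y}}(\theta)\sim\|y\|_{M_\theta}$ via Corollary~\ref{CorollaryOrlicz} and then compares the Orlicz function $M_\theta$ with its Gaussian analogue.
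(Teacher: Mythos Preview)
Your proof is correct and takes a genuinely different route from the paper's.

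The paper works directly with the Orlicz-norm representation of Corollary~\ref{CorollaryOrlicz}: for fixed $k$ and $\theta$ it lower-bounds $M_\theta(|y_i|/s)$ by the tail probability $\Pro(|\langle X,\theta\rangle|\ge 2s/|y_i|)$, applies Klartag's CLT at each of the $k$ scales $2s/|y_i^*|$ to replace these tails by Gaussian tails, and then combines the resulting terms via the arithmetic--geometric mean inequality to find a value $s_0\sim m_k\sqrt{\log(k+1)}$ for which $\sum_{i\le k}M_\theta(|y_i|/s_0)>1$. The need to stay inside Klartag's window at the \emph{smallest} weight forces $2s_0/|y_k^*|\lesssim n^{\kappa}$, which is precisely the defining inequality of $I(y)$.

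Your approach sidesteps the Orlicz machinery entirely: a single Markov step on the numbers $|y_i^*|^{-2}$ isolates a subset $S$ of size $\ge 3k/4$ on which all weights are at least $m_k/2$, so one may factor $m_k/2$ out and reduce to the unperturbed polytope on $|S|$ points, for which the bound $\E w\gtrsim\sqrt{\log(|S|+1)}$ follows from a single application of the CLT at scale $\sqrt{\log |S|}\le\sqrt{\log n}$. As you observe, this never invokes the condition $m_k/|y_k^*|\le n^{c_1}$, so your argument in fact proves the inequality for every $k\in\{1,\dots,n\}$ --- a slight strengthening of the proposition. The paper's route, on the other hand, keeps the perturbation weights explicit throughout and illustrates how the Orlicz-norm viewpoint interacts with the CLT, which is the theme of the rest of the paper. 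One small remark: your ``alternatively'' citation of \cite{DGT2}, \cite{Pa} is for the regime $s\ge n$, so for $s\le n$ you should rely on your own CLT computation (which is fine).
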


\begin{rmk}
Notice that in the case $y=(1,\dots, 1)$ and $n\leq N\leq n^\delta$ we recover the exact lower bound for $\E w(K_N)$.
\end{rmk}
\begin{proof}
Let $y\in \R^N$ and $k\in I(y)$. We can assume without loss of generality that $y=y^*$. Obviously, we have that
$$
K_{N,y}\supseteq \textrm{conv}\{\pm y_1X_1,\dots,\pm y_kX_k\}=: K_{k,y}.
$$
Thus, for every $\theta\in S^{n-1}$
$$
h_{K_{N,y}}(\theta)\geq h_{K_{k,y}}(\theta)=\max_{1\leq i\leq k}|\langle y_iX_i,\theta\rangle|,
$$
and then
$$
\E h_{K_{N,y}}(\theta)\geq \E h_{K_{k,y}}(\theta)=\E \max_{1\leq i\leq k}|\langle y_iX_i,\theta\rangle|.
$$
By Theorem \ref{THM Schuett Werner Litvak Gordon}
$$
\E \max_{1\leq i\leq k}|\langle y_iX_i,\theta\rangle|  \sim \left\|y\right\|_{M_\theta} = \inf\left\{s>0\,:\,\sum_{i=1}^k M_\theta\left(\frac{|y_i|}{s}\right)\leq 1\right\} .
$$
Moreover, $M_\theta\left(\frac{|y_i|}{s}\right)\geq \Pro\left( |\langle X,\theta\rangle|\geq \frac{2s}{|y_i|}\right)$. This holds because for every $i=1,\ldots,k$
\begin{eqnarray*}
M_\theta\left(\frac{|y_i|}{s}\right) & = & \int_{0}^{\frac{|y_i|}{s}} \int_{\{|\langle x,\theta\rangle| \geq \frac 1 t \}} |\langle x,\theta \rangle| \,d\Pro\,dt \\
& \geq & \int_{\frac{|y_i|}{2s}}^{\frac{|y_i|}{s}} \int_{\{|\langle x,\theta\rangle| \geq \frac 1 t \}} |\langle x,\theta \rangle| \,d\Pro\,dt \\
& \geq & \frac{2s}{|y_i|} \int_{\frac{|y_i|}{2s}}^{\frac{|y_i|}{s}} \Pro\left( |\langle X,\theta\rangle| \geq \frac{2s}{|y_i|} \right) \,dt \\
& = & \Pro\left(|\langle X,\theta\rangle| \geq \frac{2s}{|y_i|} \right).
\end{eqnarray*}
Hence, we obtain for every $\theta\in S^{n-1}$
$$
\E h_{K_{N,y}}(\theta) \geq c \inf \left\{ s>0 \,:\, \sum_{i=1}^k \Pro\{|\langle X,\theta\rangle| \geq \frac{2s}{|y_i|} \} \leq 1\right\}.
$$
Now, if for some $s_0>0$ we have
$$
  \sum_{i=1}^k \Pro\left( |\langle x,\theta\rangle|\geq \frac{2s_0}{|y_i|}\right)|>1,
$$
we obtain $\E h_{K_{N,y}}(\theta)\geq cs_0$.

By Theorem \ref{THM_CLT}, there is a constant $\kappa$ and a set $\Theta\subseteq S^{n-1}$ with $\sigma(\Theta)\geq 1-Ce^{-\sqrt n}$ such that if $\theta\in\Theta$ and $\frac{2t}{|y_i|}\leq n^\kappa$
\begin{eqnarray*}
\Pro \left ( |\langle X,\theta\rangle| \geq \frac{2t}{|y_i|}\right) & > & 1-\left(1+\frac{C^\prime}{n^\kappa}\right)\left(1-\frac{|y_i|e^{-\frac{2t^2}{|y_i|^2}}}{2t\sqrt{2\pi}}\right)\cr
&\geq&\frac{|y_i|e^{-\frac{2t^2}{|y_i|^2}}}{2t\sqrt{2\pi}}-\frac{C^\prime}{n^\kappa}.
\end{eqnarray*}
Thus, we are looking for $t$ such that
$$
\sum_{i=1}^k\frac{|y_i|e^{-\frac{2t^2}{|y_i|^2}}}{2t\sqrt{2\pi}}\geq2+ C^\prime \frac{k}{n^\kappa},
$$
and, therefore, a $t$ such that
$$
\frac{1}{k}\sum_{i=1}^n\frac{|y_i|e^{-\frac{2t^2}{|y_i|^2}}}{2t\sqrt{2\pi}}\geq \frac{C^{\prime\prime}}{\min\{k, n^{\kappa}\}}
$$
works. By the arithmetic-geometric mean inequality
$$
\frac{1}{k}\sum_{i=1}^k\frac{|y_i|e^{-\frac{2t^2}{|y_i|^2}}}{2t\sqrt{2\pi}}\geq\frac{1}{2t\sqrt{2\pi}}\prod_{i=1}^k|y_i|^\frac{1}{k}e^{-\frac{2t^2}{k}\sum_{i=1}^k\frac{1}{|y_i|^2}}.
$$
Taking $t=\frac{\sqrt{\alpha\log (k+1)}}{\sqrt 2\sqrt{\frac{1}{k}\sum_{i=1}^k\frac{1}{|y_i|^2}}}$ we obtain that this quantity equals
$$
\frac{\sqrt{\frac{1}{k}\sum_{i=1}^k\frac{1}{|y_i|^2}}\prod_{i=1}^k|y_i|^\frac{1}{k}}{2\sqrt\pi\sqrt{\alpha\log (k+1)}k^\alpha}\geq\frac{1}{2\sqrt\pi\sqrt{\alpha\log (k+1)}k^\alpha},
$$
which is greater than $\frac{C^{\prime\prime}}{\min\{k,n^\kappa\}}$ if $\alpha$ is a constant small enough. Thus, for every $\theta\in \Theta$,
$$
\E h_{K_{N,y}}\geq \frac{c\sqrt{\log (k+1)}}{\sqrt{\frac{1}{k}\sum_{i=1}^k\frac{1}{|y_i^*|^2}}}.
$$
By Markov's inequality
$$
\E w(K_{N,y})\geq \frac{c_2\sqrt{\log (k+1)}}{\sqrt{\frac{1}{k}\sum_{i=1}^k\frac{1}{|y_i^*|^2}}}.
$$
\end{proof}

\proof[Acknowledgements]
This work was done while the first named author visited the second named author at Johannes Kepler University in Linz, Austria.
We would like to thank the department for the financial support and for providing such good environment and working conditions. We would also like to thank our colleague
Nikos Dafnis for helpful discussions.

\end{document}